\documentclass[a4paper]{amsart}
\usepackage{amssymb}

\usepackage{mathrsfs}
\newtheorem{theorem}{Theorem}
\newtheorem{lem}[theorem]{Lemma}
\newtheorem{cor}[theorem]{Corollary}
\newtheorem{prop}[theorem]{Proposition}
\theoremstyle{definition}
\newtheorem{defi}[theorem]{Definition}
\newtheorem{example}[theorem]{Example}

\theoremstyle{remark}
\newtheorem{rem}[theorem]{Remark}


\newcommand{\BC}{\mathbb{C}}            
\newcommand{\BZ}{\mathbb{Z}}             

\newcommand{\Glie}{\mathfrak{g}}        
\newcommand{\Hlie}{\mathfrak{h}}          
\newcommand{\Yang}{Y_{\hbar}(\mathfrak{g})}    


\newcommand{\BGG}{\mathcal{O}}      
\newcommand{\HJ}{\mathcal{O}_{\mathrm{HJ}}}

\newcommand{\BQ}{\mathbf{Q}}                
\newcommand{\wt}{\mathrm{wt}}         

\newcommand{\CL}{\mathscr{L}}       
\newcommand{\CR}{\mathscr{R}}     
\newcommand{\lCQ}{\mathcal{Q}}     
\newcommand{\lCP}{\mathcal{P}}     

\newcommand{\lwt}{\mathrm{wt}_{\ell}}   
\newcommand{\Be}{\mathbf{e}}
\newcommand{\Bn}{\mathbf{n}}  
\newcommand{\Bm}{\mathbf{m}} 
\newcommand{\Bf}{\mathbf{f}}  
\newcommand{\Bw}{\mathbf{w}}    
\newcommand{\Bd}{\mathbf{d}}    
\newcommand{\Bs}{\mathbf{s}}
\newcommand{\Bt}{\mathbf{t}}   

\newcommand{\CEl}{\mathcal{E}_{\ell}}        
\newcommand{\qc}{\chi_{\mathrm{q}}}            
\newcommand{\nqc}{\widetilde{\qc}}            
\newcommand{\pr}{\mathrm{pr}}           

\newcommand{\CM}{\mathscr{M}}     

\newcommand{\SF}{\mathscr{F}}         
\newcommand{\SG}{\mathscr{G}}     
\newcommand{\SL}{\mathscr{L}}     

\allowdisplaybreaks                

\begin{document}
\title{Yangians and Baxter's relations}
\author{Huafeng Zhang}
\thanks{Univ. Lille, CNRS, UMR 8524 - Laboratoire Paul Painlev\'e, F-59000 Lille, France \\
Email: huafeng.zhang@univ-lille.fr } 

\begin{abstract}
We study a category $\mathcal{O}$ of representations of the Yangian associated to an arbitrary finite-dimensional complex simple Lie algebra. We obtain asymptotic modules as analytic continuation of a family of finite-dimensional modules, the Kirillov--Reshetikhin modules. In the Grothendieck ring we establish the three-term Baxter's TQ relations for the asymptotic modules. We indicate that Hernandez--Jimbo's limit construction can also be applied, resulting in modules over anti-dominantly shifted Yangians.

\noindent {\bf Keywords}: Yangians, asymptotic representations, tensor products. 

\noindent {\bf Mathematics Subject Classification}: 17B37, 17B10, 17B80.
\end{abstract}
\maketitle

\section*{Introduction}

Fix $\Glie$ to be a finite-dimensional complex simple Lie algebra, and $\hbar$ a non-zero complex number. The universal enveloping algebra of the current algebra $\Glie \otimes_{\BC} \BC[t]$, as a co-commutative Hopf algebra, can be deformed to the {\it Yangian} $\Yang$. The latter is a Hopf algebra neither commutative nor co-commutative, and it contains the universal enveloping algebra of $\Glie$ as a sub-Hopf-algebra. 

In this paper we are interested in a category $\BGG$ of representations of $\Yang$. Its objects are $\Yang$-modules which, viewed as $\Glie$-modules, belong to the category of Bernstein--Gelfand--Gelfand (BGG for short)  without integrability assumption \cite{Kac}. Category $\BGG$ contains all finite-dimensional modules, it is abelian and monoidal, and its Grothendieck ring is commutative thanks to a highest weight classification of irreducible objects, a common phenomenon in Lie Theory. 

As a prototype, consider the Lie algebra $\mathfrak{sl}_2$ of traceless two-by-two matrices. It admits the vector representation on $\mathbb{C}^2$ by matrix multiplication, and the infinite-dimensional Verma module $M_x$ of highest weight $x \in \BC$. In the Grothendieck ring of the category BGG of $\mathfrak{sl}_2$-modules one 
has 
\begin{equation} \label{equ: sl2}
[\BC^2]  [M_x] = [M_{x+1}] + [M_{x-1}] \quad \forall x \in \BC.
\end{equation}
Indeed, the Verma module $M_{x+1}$ is embedded in the tensor product $\BC^2 \otimes M_x$ via highest weight vectors, the quotient of which is isomorphic to $M_{x-1}$.

Our main result of this paper, Theorem \ref{thm: main}, describes similar {\bf three-term} relations for the Yangian $\Yang$. Namely, we find Yangian analogs of: 
\begin{itemize}
\item the Verma modules  $\rightsquigarrow$ the {\it asymptotic modules} $\SL(\frac{\Psi_{i,y}}{\Psi_{i,x}})$ in Definition \ref{defi: asymptotic module};
\item the two-dimensional module  $\rightsquigarrow$ the modules $\CM_{k,x}^{(i)}$ in Equation \eqref{def: generic prime demazure}.
\end{itemize}
Here $k, x, y$ are complex numbers, $i$ is a Dynkin node of the underlying Lie algebra $\Glie$, and the $M_{x\pm 1}$ in the Yangian situation are tensor products of asymptotic modules. As a byproduct,
we obtain three-term identities in the Grothendieck ring of the category BGG of $\Glie$-modules, which in the case $\Glie = \mathfrak{sl}_2$ reduce to Equation \eqref{equ: sl2}.  

Besides,  we derive three-term relations for the quantum affine algebra $U_q(\widehat{\Glie})$ where $q$ is a generic complex number. The required asymptotic modules were constructed earlier in \cite{Z2}, and they belong to a category $\hat{\BGG}$ of $U_q(\widehat{\Glie})$-modules introduced in \cite{He} and further studied in \cite{MY}. 

\

One of our motivations behind these relations lies in the spectral problem of exactly solvable models. For such a model, it is essential to determine the common spectra of a family $T(z)$, with a complex parameter $z$, of commuting endomorphisms of a vector space. An important idea of R.~Baxter \cite{Baxter72} is to construct auxiliary endomorphisms $Q(z)$ that mutually commute with the $T(z)$ and obey a functional relation, the {\it Baxter's TQ relation}:
\begin{equation}  \label{Baxter TQ}
T(z) Q(z) = f(z) Q(z+\hbar) + g(z) Q(z-\hbar)
\end{equation}  
for certain scalar functions $f(z)$ and $g(z)$. Consider the Heisenberg spin chain as an example. The $T(z)$ and $Q(z)$ are polynomials in $z$, so that the eigenvalues of $Q(z)$ have the form $\prod_{i=1}^n(z-z_i)$. Under genericity assumption on the roots, by inserting $z = z_i$ in the TQ relation we obtain the {\it Bethe Ansatz Equations}
$$ \prod_{j: j\neq i} \frac{z_i-z_j+\hbar}{z_i-z_j -\hbar} = \frac{g(z_i)}{f(z_i)},\quad 1 \leq i \leq n. $$
Eigenvalues of $T(z)$ are expressed in terms of the roots $z_i$ based on the TQ relation.

After the pioneer works of Bazhanov--Lukyanov--Zamolodchikov \cite{BazhanovLukyanovZamolodchikov1997, BazhanovLukyanovZamolodchikov1999}, the recent work of Frenkel--Hernandez \cite{FH} gave a representation-theoretic interpretation of Baxter's TQ relation. One starts from a quantum affine algebra $U_q(\widehat{\Glie})$ with a fixed representation $V$, referred to as quantum space. Hernandez--Jimbo \cite{HJ} introduced a category $\HJ$ of representations of the upper Borel subalgebra whose Grothendieck ring $K_0(\HJ)$ is commutative. The standard transfer matrix construction based on the universal R-matrix endows $V$ with an action of $K_0(\HJ)$. 

This already recovers the well-known six-vertex model. The Lie algebra $\Glie$ is $\mathfrak{sl}_2$. For $a \in \BC^{\times}$, one has a two-dimensional irreducible representation $\BC_a^2$ of $U_q(\widehat{\mathfrak{sl}_2})$ (coming from Jimbo's evaluation map $U_q(\widehat{\mathfrak{sl}_2}) \longrightarrow U_q(\mathfrak{sl}_2)$) and the so-called positive prefundamental module $L_a^+$ over the upper Borel subalgebra. The transfer matrix of $\BC_a^2$ gives $T(za)$, while that of $L_a^+$ gives $Q(za)$.  Baxter's TQ relation is a consequence of the following identity in $K_0(\HJ)$:
$$ [\BC_a^2] [L_a^+] = [\BC_q][L_{aq^{-2}}^+]  + [\BC_{q^{-1}}][L_{aq^2}^+]\quad \forall a \in \BC^{\times}. $$
Here $\BC_q$ and $\BC_{q^{-1}}$ are one-dimensional weight modules in category $\HJ$. Their transfer matrices are scalar products by $f(z)$ and $g(z)$ in the TQ relation.

For higher rank Lie algebras, various three-term identities in $K_0(\HJ)$ have been established \cite{Jimbo,FH2,HL}, the right-hand side of which is a sum of two monomials in the prefundamental modules. This leads to a proof of Bethe Ansatz Equations for the quantum integrable system attached to $U_q(\widehat{\Glie})$.

Let us return to Heisenberg spin chain. The quantum group in question is the Yangian $Y_{\hbar}(\mathfrak{gl}_2)$. One does not have Borel subalgebras to define category $\HJ$. Indeed, from the viewpoint of Yangian double \cite{KT}, the Yangian itself is a Borel subalgebra. Still there are  prefundamental modules $L_a^{\pm}$ over the degenerate Yangian \cite{B,B2}. \footnote{The Yangian $Y_{\hbar}(\mathfrak{gl}_2)$ has the R-matrix realization. Its generators are encoded in a square matrix $T(z)$ whose entries are power series of $z^{-1}$ with initial condition that $T(\infty)$ is the identity matrix. Dropping the initial condition, one obtains the degenerate Yangian. \label{footnote} } Baxter's TQ relation \eqref{Baxter TQ} comes from a similar identity
\begin{equation}  \label{TQ}
 [\BC_a^2][L_a^+] = [\theta_a][L_{a+\hbar}^+]  + [\theta_a'][L_{a-\hbar}^+] \quad \forall a \in \BC,
\end{equation}
where $\BC_a^2$ is a two-dimensional irreducible module over $Y_{\hbar}(\mathfrak{gl}_2)$ indexed by $a \in \BC$, and $\theta_a$ and $\theta_a'$ are one-dimensional modules over the degenerate Yangian. 

\

Equations \eqref{equ: sl2} and \eqref{TQ} bear strong resemblance. In this paper we argue that the prefundamental modules over degenerate Yangian can be replaced by the asymptotic modules over the ordinary Yangian. One main reason is that the right-hand side of the three-term identity in Theorem \ref{thm: main} is a sum of two monomials in asymptotic modules. This should eventually lead to Bethe Ansatz Equations for quantum integrable system attached to $\Yang$, upon identification of the Q-operators with transfer matrices of asymptotic modules. See \cite[Appendix A]{FZ} for the $\mathfrak{sl}_2$ case.

We give a few comments on the asymptotic modules of Yangians. Let $I$ be the set of Dynkin nodes of $\Glie$. It is a classical result of V.~Drinfeld \cite{Dr} that finite-dimensional irreducible $\Yang$-modules are parametrized by $I$-tuples of rational functions of $u$ satisfying a polynomiality property. For $(i, k, x) \in I \times \BZ_{>0} \times \BC$ one has the Kirillov--Reshetikhin module \cite{KR} corresponding to the $I$-tuple
$$ (1,\cdots, 1, \frac{u+kd_i\hbar+x\hbar}{u+x\hbar}, 1, \cdots, 1) $$
with the non-trivial function at the $i$-th position. The asymptotic module $\SL(\frac{\Psi_{i,y}}{\Psi_{i,x}})$ for $y \in \BC$ is an analytic continuation of these modules: as the integer $k$ goes to infinity, one replaces $kd_i$ with $y-x$. This is inspired by earlier limit construction for quantum affine algebras \cite{HJ} where $q^{-k}$ specializes to $0$ as $k$ goes to infinity.

Our approach has the advantage that all representations are defined over the full quantum group. It was initiated in a joint work with G.~Felder \cite{FZ} and further developed in \cite{Z4} for elliptic quantum groups of type A. The essential part of the proof of Baxter's TQ relation in \cite{Z4} is a short exact sequence of tensor products of Kirillov--Reshetikhin modules and a Demazure-like module. Such exact sequence exists for Yangians \cite{FoH}. When the underlying Lie algebra $\Glie$ is simply laced, the proof of \cite{Z4} is readily adapted to Yangians. While for $\Glie$ non simply laced, a tensor product factorization of this Demazure-like module is necessary; see Remark \ref{rem: factorization}. \footnote{Readers interested in the simply-laced case can ignore Proposition \ref{prop: generalized simple roots}, Corollary \ref{cor: layer structure of l-weights}, and the key technical result Corollary \ref{cor: l-weight prime Demazure}, as the tensor product factorization of Remark \ref{rem: factorization} is trivial. }

Additionally, we apply Hernandez--Jimbo's limit procedure \cite{HJ} to obtain prefundamental modules over the so-called shifted Yangians introduced in  \cite{BFN} (roughly speaking one replaces the R-matrix realization in Footnote \ref{footnote} by the Drinfeld current realization). Our negative prefundamental modules have the same q-characters as those in \cite{HJ},  yet strangely the positive ones are always one-dimensional.

We expect the asymptotic modules as well as their three-term relations to exist for elliptic quantum groups outside of type A \cite{F,GTL2,YZ}, one of the main technical difficulties being the lack of coproduct. As an intermediate step, it is interesting to study quantum toroidal algebras and affine Yangians whose Drinfeld--Jimbo type coproduct has been partly established \cite{GNW, JZ}. 

This paper is structured as follows. In Section \ref{sec: pre} we define the category $\BGG$ of $\Yang$-modules and study its basic properties. Section \ref{sec: KR} deduces some combinatorial properties of Kirillov--Reshetikhin modules from those in the case of quantum affine algebras via the functor of Gautam--Toledano Laredo. In Section \ref{sec: asym} we construct asymptotic modules and prove the three-term identities. In Section \ref{sec: prefund} we define prefundamental modules over shifted Yangians. In the appendix we provide the three-term identities for quantum affine algebras. 

\

{\it Acknowledgments.} The author thanks Giovanni Felder, David Hernandez, Nicolai Reshetikhin and Marc Rosso for helpful discussions. 

\section{Preliminaries}  \label{sec: pre}
We collect basic facts on Yangians and their representations. All vector spaces are over $\BC$. Recall that $\Glie$ is fixed to be a finite-dimensional simple Lie algebra.

Let $\Hlie$ be the Cartan subalgebra of $\Glie$, and $I := \{1,2,\cdots,r\}$ the set of Dynkin nodes. The dual space $\Hlie^*$ admits a basis of {\it simple roots} $(\alpha_i)_{i \in I}$ and a non-degenerate symmetric bilinear form $(,): \Hlie^* \times \Hlie^* \longrightarrow \BC$. For $i,j \in I$ set 
$$c_{ij} := \frac{2(\alpha_i,\alpha_j)}{(\alpha_i,\alpha_i)} \in \BZ,\quad d_{ij} := \frac{(\alpha_i,\alpha_j)}{2},\quad d_i := d_{ii}. $$
After normalization $d_i \in \{1,2,3\}$ and their greatest common divisor is 1. (If $\Glie$ is simply laced, then $d_i = 1$ for all $i$.) Let $\BQ := \oplus_{i\in I} \BZ \alpha_i \subset \Hlie^*$ be the {\it root lattice} and set $\BQ_- := \oplus_{i\in I} \BZ_{\leq 0} \alpha_i$.  Define the {\it fundamental weights} $\varpi_i \in \Hlie^*$ for $i \in I$ by the equations $(\alpha_i,\varpi_j) = d_i \delta_{ij}$. Define the {\it height function} $h: \BQ \longrightarrow \BZ $ to be the abelian group morphism sending each $\alpha_i$ to $1$.

The Yangian $\Yang$ is an algebra generated by elements $\{x_{i,n}^{\pm}, \xi_{i,n}\}_{i \in I, n \in \BZ_{\geq 0}}$ subject to the following relations for $i,j \in I$ and $m,n \in \BZ_{\geq 0}$:
\begin{gather}
[\xi_{i,m}, \xi_{j,n}] = 0,\quad [\xi_{i,0}, x_{j,n}^{\pm}] = \pm 2d_{ij} x_{j,n}^{\pm}, \quad [x_{i,m}^+,x_{j,n}^-] = \delta_{ij} \xi_{i,m+n}, \label{rel: Cartan}  \\
[\xi_{i,m+1}, x_{j,n}^{\pm}] - [\xi_{i,m}, x_{j,n+1}^{\pm}] = \pm d_{ij}\hbar (\xi_{i,m}x_{j,n}^{\pm} + x_{j,n}^{\pm} \xi_{i,m}), \label{rel: Cartan-Drinfeld}  \\
[x_{i,m+1}^{\pm}, x_{j,n}^{\pm}] - [x_{i,m}^{\pm}, x_{j,n+1}^{\pm}] = \pm d_{ij}\hbar (x_{i,m}^{\pm}x_{j,n}^{\pm} + x_{j,n}^{\pm} x_{i,m}^{\pm}), \label{rel: Drinfeld} \\
\mathrm{ad}_{x_{i,0}^{\pm}}^{1-c_{ij}} (x_{j,n}^{\pm}) = 0 \quad \mathrm{if}\ i \neq j. \label{rel: Serre}
\end{gather}
Here $\mathrm{ad}_x(y) := xy - yx$. Define the generating series in $u^{-1}$ for $i \in I$:
 \begin{equation}  \label{def: generators currents}
x_i^{\pm}(u) := \hbar \sum_{n=0}^{\infty} x_{i,n}^{\pm} u^{-n-1},\quad \xi_i(u) := 1 + \hbar \sum_{n=0}^{\infty} \xi_{i,n} u^{-n-1}.
\end{equation}
 
 Algebra $\Yang$ is $\BQ$-graded ({\it weight grading}): the elements $\xi_{i,n}$ are of weight $0$, while the $x_{i,n}^{\pm}$ are of weight $\pm \alpha_i$ respectively. For $\beta \in \BQ$ let $\Yang_{\beta}$ denote the subspace of elements in $\Yang$ of weight $\beta$. By Relation \eqref{rel: Cartan} an element $x \in \Yang$ is of weight $\beta$ if and only if $[\xi_{i,0}, x] = (\alpha_i,\beta) x$ for all $i \in I$.

Let $Y_{\hbar}^{+}(\Glie)$, $Y_{\hbar}^-(\Glie)$ be the subalgebras of $\Yang$ generated by the $x_{i,n}^+$, $x_{i,n}^-$ respectively. Let $Y_{\hbar}^0(\Glie)$ be the commutative subalgebra of $\Yang$ generated by the $\xi_{i,n}$. We have the triangular decomposition $Y^-_{\hbar}(\Glie) Y^0_{\hbar}(\Glie) Y^+_{\hbar}(\Glie) = \Yang$.

The subalgebra generated by the $x_{i,0}^{\pm},\ \xi_{i,0}$ for $i \in I$ is isomorphic to  the enveloping algebra $U(\Glie)$ of $\Glie$, Relations \eqref{rel: Cartan} and \eqref{rel: Serre} for $m=n=0$ identified with the Serre presentation. $\Yang$ admits a co-multiplication $\Delta: \Yang \longrightarrow \Yang^{\otimes 2}$ extending that on the enveloping algebra: $\Delta(x) = 1 \otimes x + x \otimes 1$ for $x \in \{x_{i,0}^{\pm}, \xi_{i,0}\}$. 

\begin{lem}\cite{Kn}  \label{lem: coproduct estimation}
For $i \in I$ we have (summations run over $\beta \in \BQ$):
\begin{align*}
\Delta(x_{i}^+(u)) &- x_{i}^+(u) \otimes 1 - \xi_i(u) \otimes x_{i}^+(u)  \in \sum_{h(\beta) \geq 2} \Yang_{\alpha_i-\beta} \otimes \Yang_{\beta}[[u^{-1}]], \\
\Delta(x_{i}^-(u)) &- 1 \otimes x_{i}^-(u) - x_{i}^-(u) \otimes \xi_i(u)  \in \sum_{h(\beta) \geq 2} \Yang_{-\beta} \otimes \Yang_{\beta-\alpha_i}[[u^{-1}]], \\
\Delta(\xi_{i}(u)) &- \xi_{i}(u) \otimes \xi_{i}(u) \in \sum_{h(\beta) \geq 1} \Yang_{-\beta} \otimes \Yang_{\beta}[[u^{-1}]].
\end{align*}
\end{lem}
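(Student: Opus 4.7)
The plan is to prove all three estimates simultaneously by induction on the Drinfeld generator index $n$, reading each series identity as a family of congruences modulo the weight-filtered subspaces
$$\mathcal{F}_k^{\gamma} := \sum_{h(\beta) \geq k} \Yang_{\gamma-\beta} \otimes \Yang_{\beta} \subset \Yang \otimes \Yang.$$
For the base case $n = 0$, the elements $x_{i,0}^{\pm}, \xi_{i,0}$ generate the sub-Hopf-algebra $U(\Glie) \subset \Yang$ on which $\Delta$ is primitive. Since $\xi_i(u) = 1 + O(u^{-1})$, extracting the $u^{-1}$-coefficient of each claimed right-hand side reduces the statement immediately to the primitive coproduct formula.

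For the inductive step, I would combine \eqref{rel: Cartan-Drinfeld} (taken at $j = i$, $m = 0$) with the eigenvalue relation $[\xi_{i,0}, x_{i,n}^{\pm}] = \pm 2 d_i x_{i,n}^{\pm}$ of \eqref{rel: Cartan} to solve
$$x_{i,n}^{\pm} = \pm \tfrac{1}{2d_i}\bigl([\xi_{i,1}, x_{i,n-1}^{\pm}] \mp d_i \hbar (\xi_{i,0} x_{i,n-1}^{\pm} + x_{i,n-1}^{\pm} \xi_{i,0})\bigr).$$
Applying the algebra homomorphism $\Delta$, invoking the induction hypothesis for $\Delta(x_{i,n-1}^{\pm})$, and using the coproducts of $\xi_{i,0}$ (primitive) and of $\xi_{i,1}$ (the first nontrivial case of the $\xi$-series), the computation reduces to analyzing the low-weight part of $[\Delta(\xi_{i,1}), \Delta(x_{i,n-1}^{\pm})]$. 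Because commutators respect the weight bigrading on $\Yang \otimes \Yang$, this bracket transports the low-weight structure from $n - 1$ to $n$, and repackaging the coefficients into generating series reproduces $x_i^{\pm}(u) \otimes 1 + \xi_i(u) \otimes x_i^{\pm}(u)$ modulo $\mathcal{F}_2$. The third estimate then follows from $\xi_{i,n} = [x_{i,0}^+, x_{i,n}^-]$ (cf.\ \eqref{rel: Cartan}) together with the freshly established formula for $\Delta(x_{i,n}^-)$.

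The main obstacle is the seed value $\Delta(\xi_{i,1})$ feeding the induction. Starting from $\xi_{i,1} = [x_{i,0}^+, x_{i,1}^-]$ and the first nontrivial case of the $x^-$-series, one must verify directly, using the Serre relations \eqref{rel: Serre} and the higher Drinfeld relations \eqref{rel: Drinfeld}, that the $\mathcal{F}_1$-component of $\Delta(\xi_{i,1})$ is supported only on weights $(-\alpha_j, \alpha_j)$ for $j \in I$, with no stray cross-weight contributions, so that it assembles into the $u^{-2}$-coefficient of $\xi_i(u) \otimes \xi_i(u)$ modulo $\mathcal{F}_1$. This detailed cancellation is the technical core of Knight's argument in \cite{Kn}; once it is in place, the induction propagates cleanly across all three series with only routine bookkeeping of weight components.
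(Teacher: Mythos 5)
The paper offers no proof of this lemma --- it is imported verbatim from Knight \cite{Kn} --- so there is no internal argument to compare against; I assess your sketch on its own terms. Your overall scheme is the standard one and is sound in outline: solving for $x_{i,n}^{\pm}$ from \eqref{rel: Cartan-Drinfeld} at $j=i$, $m=0$, applying $\Delta$, and noting that bracketing against $\Delta(\xi_{i,1})$ preserves the filtration $\mathcal{F}_k^{\gamma}$ because every bihomogeneous component of $\Delta(\xi_{i,1})$ has biweight $(-\alpha,\alpha)$ with $h(\alpha)\ge 0$. The deduction of the third estimate from the second via $\xi_{i,n}=[x_{i,0}^+,x_{i,n}^-]$ also checks out.

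The genuine gap is at the seed, and it is not just a deferred computation: your proposed route to $\Delta(\xi_{i,1})$ is circular. You would compute it from $\xi_{i,1}=[x_{i,0}^+,x_{i,1}^-]$ together with ``the first nontrivial case of the $x^-$-series,'' but that case, $\Delta(x_{i,1}^-)$, is produced by your recursion only \emph{after} $\Delta(\xi_{i,1})$ is known. No manipulation of the defining relations can break the circle, because $\Delta$ is not determined by primitivity on $x_{i,0}^{\pm},\xi_{i,0}$: these generate only $U(\Glie)$, and the value of $\Delta$ on a degree-one generator is extra structural data (in the $J$-presentation, $\Delta(J(x))=J(x)\otimes 1+1\otimes J(x)+\tfrac{\hbar}{2}[x\otimes 1,\Omega]$ with $\Omega$ the Casimir tensor). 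The resulting explicit formula
\[
\Delta(\xi_{i,1})=\xi_{i,1}\otimes 1+1\otimes\xi_{i,1}+\hbar\,\xi_{i,0}\otimes\xi_{i,0}-\hbar\sum_{\alpha\succ 0}(\alpha,\alpha_i)\,x_{\alpha}^{-}\otimes x_{\alpha}^{+}
\]
is the external input your induction needs; it must be imported, not verified from the Serre relations. Relatedly, your description of it is inaccurate: the $h(\beta)\ge 1$ part of $\Delta(\xi_{i,1})$ is supported on \emph{all} positive roots $\alpha$ with $(\alpha,\alpha_i)\ne 0$, not only on simple ones (compare the estimate for $\Delta(\xi_{i,1})-\Delta_i(\xi_{i,1})$ quoted in the proof of Theorem \ref{thm: KR and Demazure}). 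Those non-simple terms are harmless, landing in the error space; what the induction genuinely requires, and what your sketch does not isolate, is the exact coefficient $-\hbar(\alpha_i,\alpha_i)=-2d_i\hbar$ of $x_{i,0}^{-}\otimes x_{i,0}^{+}$, since bracketing this single term against $x_{i,n}^{+}\otimes 1$ is what produces the non-error summand $\hbar\,\xi_{i,n}\otimes x_{i,0}^{+}$ needed to match the expansion of $\xi_i(u)\otimes x_i^+(u)$.
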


\begin{rem}  \label{rem: q mult}
Let $V$ and $W$ be $\Yang$-modules.  Suppose  $\omega \in W$ is annihilated by all the  $x_i^+(u)$. By triangular decomposition, $\Yang \omega = Y_{\hbar}^-(\Glie) Y_{\hbar}^0(\Glie) \omega$, so $\Yang_{\beta} \omega = 0$ whenever $h(\beta) > 0$. In the tensor product module $V \otimes W$, the subspace $V \otimes \omega$ is annihilated by the extra summations of Lemma \ref{lem: coproduct estimation}. 
\end{rem}

Let $V$ be a $\Yang$-module. For $\beta \in \Hlie^*$, the following subspace, if non-zero, is called a weight space (and $\beta$ a weight of $V$):
$$ V_{\beta} := \{v \in V \ |\ \xi_{i,0} v = (\alpha_i,\beta) v \quad \mathrm{for}\ i \in I \}.   $$
We have $\Yang_{\alpha} V_{\beta} \subseteq V_{\alpha+\beta}$ for $\alpha \in \BQ$. Let $\wt(V) \subseteq \Hlie^*$ be the set of weights of $V$. 

\begin{defi} \label{def: O}
$\BGG$ is a full subcategory of the category of $\Yang$-modules. An object of $\BGG$ is a $\Yang$-module $V$ subject to the following conditions: 
\begin{itemize}
\item[(O1)] it is a direct sum of finite-dimensional weight spaces;
\item[(O2)] there exist $\lambda_1,\lambda_2,\cdots,\lambda_n \in \Hlie^*$ such that $\wt(V) \subseteq \cup_{j=1}^n (\lambda_j + \BQ_-)$.
\end{itemize}
\end{defi}
Every finite-dimensional $\Yang$-module is in category $\BGG$, its weight grading guaranteed by the $\Glie$-module structure. Category $\BGG$ is abelian and equipped with a monoidal structure by the co-multiplication of $\Yang$.  

We need a refinement of weights based on spectral decomposition of the action of the commutative subalgebra $Y_{\hbar}^0(\Glie)$. Introduce the group $\CL := (1 + u^{-1}\BC[[u^{-1}]])^I$ of $I$-tuple of Laurent series in $u^{-1}$ with  component-wise multiplication. Its elements are written typically as $\Bd, \Be, \Bf$ etc. For such an element $\Be$ and for $(i, n) \in I\times \BZ_{\geq 0}$, let $\Be_i(u) \in \BC((u^{-1}))$ denote the $i$-th component of $\Be$, and $\Be_{i,n} \in \BC$ the coefficient of $u^{-n-1}$ in $\hbar^{-1} \Be_i(u)$. So $\Be_i(u) = 1 + \hbar \sum_{n\geq 0} \Be_{i,n} u^{-n-1}$. 

The following map, called {\it weight projection}, is an abelian group morphism:
$$\varpi: \CL \longrightarrow \Hlie^*,\quad \Be \mapsto \sum_{i\in I} \frac{\Be_{i,0}}{d_i} \varpi_i.  $$

Let $V$ be a $\Yang$-module. For $\Be \in \CL$, the following subspace, if non-zero, is called an $\ell$-weight space (and $\Be$ an $\ell$-weight of $V$)
$$ V_{\Be} :=   \{v \in V \ |\ \forall\ (i,n) \in I \times \BZ_{\geq 0}\ \exists\ m \in \BZ_{>0}\ \mathrm{such\ that}\ (\xi_{i,n} - \Be_{i,n})^m v = 0  \}. $$
Let $\lwt(V) \subseteq \CL$ denote the set of $\ell$-weights of $V$. 

A non-zero vector $v \in V$ is called a {\it highest $\ell$-weight vector} if it is a common eigenvector of the $\xi_{i,n}$ and is annihilated by the $x_{i,n}^+$. If furthermore $V = \Yang v$, then $V$ is called a {\it highest $\ell$-weight module}. In this situation, by Remark \ref{rem: q mult}, $v$ belongs to a one-dimensional $\ell$-weight space $V_{\Bd}$ which is also the weight space $V_{\varpi(\Bd)}$ (call $\Bd$ the highest $\ell$-weight of $V$), and we have $\wt(V) = \varpi(\Bd) + \BQ_-$. The tensor product of two highest $\ell$-weight vectors  is a highest $\ell$-weight vector.

If $V$ is in category $\BGG$, then each weight space $V_{\beta}$ is a direct sum of the $V_{\Be}$ where $\Be \in \lwt(V)$ and $\varpi(\Be) = \beta$. 
Following H.~Knight \cite{Kn}, define its {\it q-character} by 
$$ \qc(V) := \sum_{\Be \in \lwt(V)}  \dim(V_{\Be}) \Be \in \CEl. $$
 The target $\CEl$ is the set of formal sums $\sum_{\Be \in \CL} n_{\Be} \Be$ of the symbols $\Be$ with integer coefficients $n_{\Be}$ subject to the following conditions \cite[\S 3.4]{HJ}
\begin{itemize}
\item[(E1)] for each $\beta \in \Hlie^*$ the set $\{\Be \in \CL\ |\ n_{\Be} \neq 0,\ \varpi(\Be) = \beta \}$ is finite;
\item[(E2)] there exist $\lambda_1,\lambda_2,\cdots,\lambda_m \in \Hlie^*$ such that $\varpi(\Be) \in \cup_{j=1}^m (\lambda_j + \BQ_-)$ if $n_{\Be} \neq 0$.
\end{itemize}
It is a ring: addition is the usual one of formal sums; multiplication is induced by that of $\CL$. (One views $\CEl$ as a completion of the group ring $\BZ[\CL]$.) 

\begin{rem}  \label{rem: spectral shift}
To $x \in \BC$ one attaches a Hopf algebra automorphism:
\begin{equation}  \label{def: spectral shift}
\tau_x: \Yang \longrightarrow \Yang,\quad x_{i}^{\pm}(u) \mapsto x_i^{\pm}(u+x\hbar),\quad \xi_i(u) \mapsto \xi_i(u+x\hbar)
\end{equation}
called {\it spectral parameter shift}.
The pullback of a module $V$ by $\tau_a$ is another module whose q-character is obtained from $\qc(V)$ by replacing each of the $\ell$-weights $(\Be_i(u))_{i \in I}$ of $V$ with $(\Be_i(u+x\hbar))_{i\in I}$. 
\end{rem}

Define $\CR'$ to be the subset of $\CL$ whose elements are highest $\ell$-weights of highest $\ell$-weight modules in category $\BGG$. For $\Bd \in \CR'$, there exists a unique (up to isomorphism) irreducible module in category $\BGG$ of highest $\ell$-weight $\Bd$, denoted by $L(\Bd)$. Based on the triangular decomposition, the $L(\Bd)$ for $\Bd \in \CR'$ form the set of (mutually non-isomorphic) irreducible modules in category $\BGG$. 

Let us view $\CL$ as a subgroup of the monoid $\BC((u^{-1}))^I$. For $(i,x) \in I \times \BC$, the following three invertible elements of $\BC((u^{-1}))^I$ will play a crucial role:
\begin{align}  
    &\Psi_{i,x} := (\underbrace{1,\cdots,1}_{i-1}, u + x \hbar, \underbrace{1,\cdots,1}_{r-i}) \quad \quad \mathrm{prefundamental\ weight},\label{def: pre-fund} \\
    & A_{i,x} := \prod_{j \in I} \frac{\Psi_{j, x + d_{ij}}}{\Psi_{j, x - d_{ij}} } \quad \quad \mathrm{generalized\ simple\ root},  \label{def: simple root} \\
    & Y_{i,x} := \frac{\Psi_{i,x+\frac{1}{2}d_i}}{\Psi_{i,x-\frac{1}{2}d_i}} \quad \quad \mathrm{fundamental\ loop\ weight}.  \label{def: fund l-weights}
\end{align}
   The ratios $\frac{\Psi_{i,x}}{\Psi_{i,y}}$ for $(i,x,y) \in I \times \BC^2$ belong to $\CL$ and they generate a subgroup, denoted by $\CR$. Equivalently, an element $\Be \in \CL$ belongs to $\CR$ if and only if each of the $\Be_i(u)$ is the Taylor expansion around $u = \infty$ of a rational function.

\begin{lem} \label{lem: denominator weight space}
Fix $\Bd \in \CR'$. Take a highest $\ell$-weight vector $\omega$ from $L(\Bd)$.
\begin{itemize}
\item[(a)] We have $\Bd \in \CR$. Fix $i \in I$ and let $u^s + c_1 u^{s-1} + \cdots + c_{s-1} u + c_s$ be the denominator of the rational function $\Bd_i(u)$.
\item[(b)] The $L(\Bd)$-valued Laurent series 
$ (u^s + c_1 u^{s-1} + \cdots + c_{s-1} u + c_s) x_i^-(u) \omega $
is a polynomial in $u$ of degree $s-1$ with leading term $\hbar x_{i,0}^- \omega u^{s-1}$. 
\end{itemize}
In the special case $\Bd_i(u) = 1$,  we have $s = 0$ and $x_i^-(u) \omega = 0$.
\end{lem}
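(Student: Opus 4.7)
The proof splits naturally into three ingredients. Set $v_n := x_{i,n}^-\omega$. All $v_n$ lie in the finite-dimensional weight space $L(\Bd)_{\varpi(\Bd)-\alpha_i}$ by (O1), so they satisfy a nontrivial linear relation. Applying $x_{i,m}^+$ to such a relation and using $x_{i,m}^+ v_k = [x_{i,m}^+,x_{i,k}^-]\omega = \xi_{i,m+k}\omega = \Bd_{i,m+k}\omega$ (since $x_{i,m}^+\omega = 0$) translates it into a linear recursion on the scalar sequence $(\Bd_{i,n})_{n\ge 0}$. Such a recursion is equivalent to $\hbar\sum_{n\ge 0}\Bd_{i,n}u^{-n-1} = \Bd_i(u)-1$ being the expansion of a rational function in $u$, which proves (a).

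For (b), let $q(u) := u^s + c_1 u^{s-1}+\cdots+c_s$ denote the denominator of $\Bd_i(u)$ in lowest terms. The requirement that $q(u)\Bd_i(u)$ be a polynomial is equivalent, via the coefficient of $u^{-n-1}$, to the scalar recursion $\Bd_{i,n+s}+c_1\Bd_{i,n+s-1}+\cdots+c_s\Bd_{i,n}=0$ for every $n\ge 0$. Form the lift $y_n := v_{n+s}+c_1 v_{n+s-1}+\cdots+c_s v_n \in L(\Bd)_{\varpi(\Bd)-\alpha_i}$. By \eqref{rel: Cartan} and $x_{j,m}^+\omega=0$ one has $x_{j,m}^+ v_k = \delta_{ij}\Bd_{i,m+k}\omega$, so the scalar recursion is exactly the statement $x_{j,m}^+ y_n = 0$ for every $(j,m)\in I\times\BZ_{\ge 0}$.

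The core of the argument is to upgrade "annihilated by $Y_{\hbar}^+(\Glie)$" into "zero", and this is where the irreducibility of $L(\Bd)$ enters. If $y_n \neq 0$, the triangular decomposition combined with $Y_{\hbar}^+(\Glie)y_n \subseteq \BC y_n$ shows $\Yang y_n = Y_{\hbar}^-(\Glie)Y_{\hbar}^0(\Glie)y_n$, a nonzero submodule of $L(\Bd)$ whose weights all lie in $\varpi(\Bd)-\alpha_i+\BQ_-$. As $\omega$ has weight $\varpi(\Bd)$, it lies outside this set, so $\Yang y_n$ is a proper nonzero submodule of $L(\Bd)$, contradicting irreducibility. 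Hence $y_n = 0$ for all $n\ge 0$.

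Substituting this vector recursion into the formal expansion $q(u)\,x_i^-(u)\omega = \hbar\sum_{j=0}^{s}c_{s-j}\sum_{n\ge 0}v_n u^{j-n-1}$ (with $c_0=1$) kills every negative power of $u$, while the coefficient of $u^{s-1}$ equals $\hbar v_0 = \hbar x_{i,0}^-\omega$; this yields (b). The nonvanishing of $v_0$ when $s\ge 1$, which pins the degree at exactly $s-1$, follows by contraposition: $v_0=0$ would force $\Bd_{i,m}\omega = x_{i,m}^+ v_0 = 0$ for all $m$, hence $\Bd_i(u)=1$ and $s=0$. In the special case $\Bd_i\equiv 1$, each $v_n$ is itself a highest-type vector (since all $\Bd_{i,n}$ vanish), so the same irreducibility argument gives $v_n=0$. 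The main subtlety I anticipate is precisely this conversion from a scalar to a vector recursion: finite-dimensionality of weight spaces alone only supplies some relation among the $v_n$, and it is irreducibility of $L(\Bd)$ that forces the minimal such relation to coincide with the one dictated by the denominator of $\Bd_i(u)$.
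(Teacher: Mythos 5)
Your argument is correct and follows essentially the same route as the paper: finite-dimensionality of the weight space $L(\Bd)_{\varpi(\Bd)-\alpha_i}$ gives the rationality in (a), and the vectors $x_{i,n+s}^-\omega+c_1x_{i,n+s-1}^-\omega+\cdots+c_sx_{i,n}^-\omega$ are killed by all $x_j^+(u)$ and hence vanish by irreducibility, giving (b). You additionally make explicit two points the paper leaves implicit, namely the triangular-decomposition argument behind ``annihilated by $Y^+_{\hbar}(\Glie)$ implies zero'' and the nonvanishing of $x_{i,0}^-\omega$ when $s\geq 1$, both of which are handled correctly.
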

\begin{proof}
The finite-dimensional weight space  $L(\Bd)_{\varpi(\Bd)-\alpha_i}$ being spanned by infinitely many vectors $x_{i,n}^- \omega$ for $n \in \BZ_{\geq 0}$, one finds $a_1,\cdots, a_m \in \BC$ such that 
$$ x_{i,m}^- \omega + a_{1} x_{i,m-1}^-\omega + \cdots + a_{m-1} x_{i,1}^-\omega + a_m x_{i,0}^- \omega = 0. $$
Applying the $x_{i,n}^+$ to this identity, from Relation \eqref{rel: Cartan} we obtain that the Laurent series $(u^m + a_1 u^{m-1} + \cdots + a_{m-1} u + a_m) \Bd_i(u)$ in $u^{-1}$ is a monic polynomial in $u$ of degree $m$, so $\Bd_i(u)$ is a ratio of two monic polynomials of the same degree. This proves part (a). For part (b), one checks that for $n \in \BZ_{\geq 0}$ the vector 
$$ x_{i,n+s}^- \omega + c_{1} x_{i,n+s-1}^-\omega + \cdots + c_{s-1} x_{i,n+1}^-\omega + c_s x_{i,n}^- \omega \in L(\Bd)_{\varpi(\Bd)-\alpha_i}  $$
is annihilated by all the $x_j^+(u)$ and must be zero because of irreducibility.  
\end{proof}

We shall see in Theorem \ref{thm: asymptotic module} that $\CR = \CR'$.

\begin{prop} \label{prop: generalized simple roots}
Fix $j \in I$. Let $V$ be in category $\BGG$ and  $\Be, \Bf \in \lwt(V)$.  Consider the projection $\mathrm{pr}_{\Bf}: V \longrightarrow V_{\Bf}$  with respect to the $\ell$-weight space decomposition.  If $ \pr_{\Bf}(x_{j,n}^+ V_{\Be}) \neq 0$ for certain $n \in \BZ_{\geq 0}$, then $\Bf =  A_{j,x}\Be$ for a unique $x \in \BC$.
\end{prop}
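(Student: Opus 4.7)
The plan is to derive a commutation identity between $\xi_i(u)$ and $x_j^+(v)$ from relation \eqref{rel: Cartan-Drinfeld}, project it between the $\ell$-weight spaces $V_\Be$ and $V_\Bf$, and read off the claim from the resulting scalar equation. Multiplying \eqref{rel: Cartan-Drinfeld} by $u^{-m-2}v^{-n-1}$ and summing over $m,n \geq 0$, a routine generating-series manipulation gives in $\Yang[[u^{-1},v^{-1}]]$
\begin{equation*}
(u-v-d_{ij}\hbar)\,\xi_i(u)\,x_j^+(v) \;=\; (u-v+d_{ij}\hbar)\,x_j^+(v)\,\xi_i(u) \;-\; \hbar\,[\xi_i(u),\,x_{j,0}^+].
\end{equation*}
Setting $T(v) := \pr_\Bf \circ x_j^+(v)|_{V_\Be}$ and $T_0 := \pr_\Bf \circ x_{j,0}^+|_{V_\Be}$, and using that each projection $\pr_\Bd$ commutes with every $\xi_{k,n}$, the identity descends to the $\mathrm{Hom}(V_\Be, V_\Bf)$-valued relation
\begin{equation*}
\xi_i(u)|_{V_\Bf}\bigl[(u-v-d_{ij}\hbar)\,T(v) + \hbar T_0\bigr] \;=\; \bigl[(u-v+d_{ij}\hbar)\,T(v) + \hbar T_0\bigr]\,\xi_i(u)|_{V_\Be},
\end{equation*}
valid for every $i \in I$.

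Next, pick a common eigenvector $v_0 \in V_\Be$ of all $\xi_{k,n}$'s and a common eigenfunctional $\phi \in V_\Bf^*$ (both exist since the commuting family acts on the finite-dimensional weight spaces), and set $t(v) := \phi(T(v)\,v_0)$, $t_0 := \phi(T_0\,v_0)$. Assuming $t(v) \not\equiv 0$, the operator identity collapses to the scalar equation
\begin{equation*}
\Bf_i(u)\bigl[(u-v-d_{ij}\hbar)\,t(v) + \hbar t_0\bigr] \;=\; \Be_i(u)\bigl[(u-v+d_{ij}\hbar)\,t(v) + \hbar t_0\bigr],
\end{equation*}
which rearranges to $\Bf_i(u)/\Be_i(u) = (u-v+d_{ij}\hbar+s(v))/(u-v-d_{ij}\hbar+s(v))$ with $s(v) := \hbar t_0/t(v)$. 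Independence of the left-hand side from $v$ forces $s(v) - v$ to be a constant, say $x\hbar$ for some $x \in \BC$; then $\Bf_i(u)/\Be_i(u) = (u+(x+d_{ij})\hbar)/(u+(x-d_{ij})\hbar) = (A_{j,x})_i(u)$ simultaneously for all $i \in I$, i.e., $\Bf = A_{j,x}\Be$. The constant $x$ does not depend on $i$ since $s(v)$ is built from the single pair $(t,t_0)$, and uniqueness is immediate as distinct $x$ produce distinct rational functions $(A_{j,x})_j(u)$.

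The one non-trivial step is producing $(v_0,\phi)$ with $t(v) \not\equiv 0$: when the $\xi$-action on $V_\Be$ or $V_\Bf$ is non-semisimple, a non-zero $T(v)$ can be invisible to pairings between genuine eigenvectors and genuine eigenfunctionals. To handle this I would induct along the $\mathfrak{m}_\Be$- and $\mathfrak{m}_\Bf$-adic filtrations on $V_\Be$ and $V_\Bf$ coming from the maximal ideals of the commutative algebra $Y_{\hbar}^0(\Glie)$: on each associated graded piece the $\xi_i(u)$-action becomes scalar (equal to $\Be_i(u)$ or $\Bf_i(u)$), lower-depth contributions get absorbed by the induction hypothesis, and the reduced operator identity between graded components has exactly the shape displayed above. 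The hypothesis $\pr_\Bf(x_{j,n}^+ V_\Be) \neq 0$ then guarantees that at some level a non-zero induced pairing occurs, at which point the preceding scalar analysis applies verbatim to deliver $\Bf = A_{j,x}\Be$.
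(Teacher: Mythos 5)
Your proposal is correct and follows essentially the same route as the paper's proof (an adaptation of Mukhin--Young's argument): commute $\xi_i(u)$ past $x_j^+(v)$ via the generating-series form of \eqref{rel: Cartan-Drinfeld}, project between the two $\ell$-weight spaces, reduce to a scalar identity by exploiting the generalized-eigenspace (filtration) structure, and read off $\Bf_i/\Be_i$ as the $i$-th component of $A_{j,x}$ with $x$ independent of $i$ because the auxiliary series depends only on $j$. The paper implements your final filtration step concretely by choosing triangularly adapted bases of $V_{\Be}$ and $V_{\Bf}$ and isolating the extremal matrix coefficient $\lambda(z)$ of $\pr_{\Bf}\circ x_j^+(z)$, then extracting the leading coefficient in $z^{-1}$ rather than dividing by $t(v)$ --- differences that are purely presentational.
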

\begin{proof}
We follow the proof of \cite[Proposition 3.8]{MY}.
Choose ordered bases $(v_k)_{1\leq k \leq s}$ and $(w_l)_{1\leq l \leq t}$ of $V_{\Be}$ and $V_{\Bf}$ respectively such that for $i \in I$:
\begin{itemize}
    \item the series $\xi_i(u) v_k$ is $\Be_i(u) v_k$ plus a sum of the $v_l$ for $1\leq l < k$;
    \item the series $\xi_i(u) w_k$ is $\Bf_i(u) w_k$ plus a sum of the $w_l$ for $k < l \leq t$.
\end{itemize}
 Since $ \pr_{\Bf}(x_{j,n}^+ V_{\Be}) \neq 0$, at least one of the $\pr_{\Bf}(x_j^+(z) v_k)$ is nonzero. One can find $1\leq K \leq s,\ 1\leq L \leq t$ and $0\neq \lambda(z) \in \BC[[z^{-1}]]$ such that: 
\begin{itemize}
\item the series $\pr_{\Bf}( x_j^+(z) v_K)$ is $\lambda(z) w_L $ plus a sum of the $w_l$ for $L<l\leq t$;
\item the series $\pr_{\Bf}(x_j^+(z) v_k)$ is zero for $1\leq k < K$.
\end{itemize}
Let $Z$ be the subspace of $V_{\Bf}$ spanned by the $w_l$ for $L<l\leq t$. Then modulo the space $Z[[z^{-1}, u^{-1}]]$ we have 
$\pr_{\Bf}(x_j^+(z) \xi_i(u) v_K)  \equiv \Be_i(u)\lambda(z) w_L$.

Let us rewrite the relation \eqref{rel: Cartan-Drinfeld} in the form of generating series \cite[\S 2.3]{GTL}:
\begin{equation*}  
(u-z- d_{ij}\hbar)\xi_i(u)x_j^{+}(z) = (u-z + d_{ij}\hbar) x_j^{+}(z) \xi_i(u) - 2d_{ij}\hbar x_j^{+}(u- d_{ij}\hbar) \xi_i(u).
\end{equation*}
Apply this relation to $v_K$ and then project the resulting identity to $V_{\Bf}$. Since the projection commutes with $\xi_i(u)$, modulo the space $Z[[z^{-1},u^{-1}]]$, we have
\begin{align*}
&\mathrm{LHS} = (u-z-d_{ij}\hbar)  \xi_i(u) \pr_{\Bf}( x_j^+(z) v_K) \equiv  (u-z-d_{ij}\hbar) \Bf_i(u) \lambda(z) w_L, \\
&\mathrm{RHS} = (u-z+d_{ij}\hbar) \pr_{\Bf}(x_j^+(z)\xi_i(u) v_K ) - 2d_{ij}\hbar \pr_{\Bf}(x_j^+(u-d_{ij}\hbar) \xi_i(u) v_K) \\
&\quad\quad \equiv [ (u-z+d_{ij}\hbar) \lambda(z)- 2d_{ij}\hbar \lambda(u-d_{ij}\hbar)]\Be_i(u) w_L, \\
&(u-z-d_{ij}\hbar) \Bf_i(u) \lambda(z) =  (u-z+d_{ij}\hbar) \Be_i(u) \lambda(z) - 2d_{ij}\hbar  \Be_i(u) \lambda(u-d_{ij}\hbar).
\end{align*}
Write $\lambda(z) = \lambda_m z^{-m-1} + \lambda_{m+1}z^{-m-2} + \cdots$ with $\lambda_m \neq 0$. We take coefficients of $z^{-m-1}$ in the last equation. The third term does not contribute, and we get  
$$ \frac{\Bf_i(u)}{\Be_i(u)} = \frac{u+d_{ij}\hbar - \lambda_{m+1}\lambda_m^{-1}  }{u-d_{ij}\hbar - \lambda_{m+1} \lambda_m^{-1} } \quad \mathrm{for}\ i \in I.  $$
This implies $\Bf = A_{j,x}\Be $ with $x = - \lambda_{m+1} \lambda_m^{-1} \hbar^{-1}$.
\end{proof}

Let $\lCP_+$  and $\lCQ_-$ be the submonoids of $\CR$ generated by the $Y_{i,x}$ and the $A_{i,x}^{-1}$ for $(i,x) \in I \times \BC$ respectively. We shall also need the subgroup $\lCP$ of $\CR$ generated by the $Y_{i,x}$. 
By \cite{Dr}, $\lCP_+ \subseteq \CR'$ and the irreducible module $L(\Bd)$ is finite-dimensional if and only if $\Bd \in \lCP_+$. 
Elements of $\lCP_+$ are called {\it dominant $\ell$-weights}.

One checks that $\varpi(A_{i,x}) = \alpha_i$ and $\varpi(Y_{i,x}) = \varpi_i$. So $\varpi(\lCQ_{-}) = \BQ_{-}$.

Consider an irreducible module $S := L(\Bd)$ in category $\BGG$. The {\it height} of
an $\ell$-weight $\Bf$ of $S$, defined as $h(\varpi(\Bf^{-1}\Bd))$, is a non-negative integer since $\varpi(\Bd^{-1}\Bf) \in \BQ_-$. For $n \in \BZ_{\geq 0}$, let $\lwt^n(S)$ be the set of $\ell$-weights of $S$ of height $n$. Then $\lwt(S)$ is a disjoint union of these $\lwt^n(S)$. In particular, $\{\Bd\} = \lwt^0(S)$.

\begin{cor} \label{cor: layer structure of l-weights}
Let $S$ be an irreducible module in category $\BGG$. We have 
$$ \lwt^{n+1}(S) \subseteq  \lwt^n(S) \{ A_{j,x}^{-1}\ |\ (j,x) \in I \times \BC \} \quad \mathrm{for}\ n \in \BZ_{\geq 0}.  $$
In particular, $\lwt(S) \subseteq \Bd \lCQ_-$ where $\Bd$ is the highest $\ell$-weight of $S$.
\end{cor}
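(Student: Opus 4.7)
The plan is to reduce the layered inclusion to Proposition \ref{prop: generalized simple roots} by showing that every $\ell$-weight space of $S$ of positive height is raised nontrivially by some $x_{j,m}^+$.

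Fix $\Bf \in \lwt^{n+1}(S)$, set $\gamma := \varpi(\Bd) - \varpi(\Bf)$ (of height $n+1 \geq 1$), and let $\omega$ be a highest $\ell$-weight vector of $S$, so that $\omega \in V_\Bd = S_{\varpi(\Bd)}$. The first step is to produce $v \in V_\Bf$ and $(j,m) \in I \times \BZ_{\geq 0}$ with $x_{j,m}^+ v \neq 0$. If, for contradiction, every $x_{j,m}^+$ annihilates $V_\Bf$, then $Y_\hbar^+(\Glie)$ acts on $V_\Bf$ by scalars, $Y_\hbar^0(\Glie)$ preserves $V_\Bf$ by definition of $\ell$-weight space, and the triangular decomposition of $\Yang$ yields
\begin{equation*}
\Yang V_\Bf = Y_\hbar^-(\Glie) Y_\hbar^0(\Glie) V_\Bf \subseteq \bigoplus_{\mu \in \varpi(\Bf)+\BQ_-} S_\mu.
\end{equation*}
Since $\varpi(\Bd) = \varpi(\Bf) + \gamma \notin \varpi(\Bf) + \BQ_-$, this forces $\omega \notin \Yang V_\Bf$, contradicting the irreducibility of $S$. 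Given such $v$ and $(j,m)$, the nonzero vector $x_{j,m}^+ v$ lies in $S_{\varpi(\Bf)+\alpha_j}$; decomposing this weight space into $\ell$-weight spaces, $x_{j,m}^+ v$ has nonzero projection $\pr_\Be(x_{j,m}^+ v)$ for some $\Be \in \lwt(S)$ with $\varpi(\Be) = \varpi(\Bf) + \alpha_j$, so that $\Be \in \lwt^n(S)$. Proposition \ref{prop: generalized simple roots} then gives $\Be = A_{j,x} \Bf$ for a unique $x \in \BC$, whence $\Bf = \Be A_{j,x}^{-1} \in \lwt^n(S) \{A_{j,x}^{-1}\}$, as required.

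The second assertion follows by induction on $n$: the base case $\lwt^0(S) = \{\Bd\} \subseteq \Bd \lCQ_-$ is trivial, and since each $A_{j,x}^{-1}$ belongs to the monoid $\lCQ_-$, the inductive step is immediate from the first assertion. The only subtle point in the argument is that $V_\Bf$ is only a generalized eigenspace for $Y_\hbar^0(\Glie)$, but no appeal to an honest $\xi$-eigenvector inside $V_\Bf$ is needed — the triangular-decomposition contradiction works directly with the full space $V_\Bf$.
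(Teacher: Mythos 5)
Your proof is correct and follows essentially the same route as the paper: reduce to Proposition \ref{prop: generalized simple roots} by finding a raising operator acting nontrivially on the given $\ell$-weight space. The only difference is that you spell out, via the triangular decomposition, why some $x_{j,m}^+$ must act nontrivially on $S_{\Bf}$ when $\Bf \neq \Bd$ — a step the paper dispatches in one sentence by irreducibility — and your justification is the intended one.
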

\begin{proof}
Let $\Be \in \lwt^{n+1}(S)$. Since $\Be \neq \Bd$,  the $\ell$-weight space $S_{\Be}$ is not the highest $\ell$-weight space. There exists $j \in I$ such that $x_j^+(u) S_{\Be} \neq 0$. Such a non-zero space must project non-trivially to certain $\ell$-weight space  $S_{\Bf}$. By Proposition \ref{prop: generalized simple roots}, there exists $x \in \BC$ such that $\Bf = A_{j,x} \Be$. We have $\Be = A_{j,x}^{-1} \Bf$ and $\Bf \in \lwt^{n}(S)$. 
\end{proof}  
Let $V$ be a module in category $\BGG$ which admits a one-dimensional weight space $V_{\lambda}$ such that $\wt(V) \subset \lambda + \BQ_-$. Then $V_{\lambda}$ is also an $\ell$-weight space of $\ell$-weight $\Bd$, and we define the {\it normalized q-character} by
$ \nqc(V) := \qc(V) \times \Bd^{-1} \in \CEl$.
If $V$ is irreducible, then Corollary \ref{cor: layer structure of l-weights} implies that $\nqc(V)$ is a (possibly infinite) sum of monomials in the $A_{j,x}^{-1}$ with leading term $1$. 
 
We introduce the {\it completed Grothendieck ring} $K_0(\BGG)$ as in \cite[\S 3.2]{HL}. Its elements are formal sums $\sum_{\Bd \in \CR'} n_{\Bd} [L(\Bd)]$ of the symbols $[L(\Bd)]$ with integer coefficients $n_{\Bd}$ such that the direct sum of $\Yang$-modules $\oplus_{\Bd \in \CR'} L(\Bd)^{\oplus |n_{\Bd}|}$ is in category $\BGG$. Addition is the usual one of formal sums. 

Let $V$ be in category $\BGG$. In general $V$ may not admit a Jordan--H\"older series of finite length. Still, as in the case of Kac--Moody algebras \cite[\S 9.3]{Kac}, for $\Bd \in \CR'$ the multiplicity $m_{\Bd,V} \in \BZ_{\geq 0}$ of the irreducible module $L(\Bd)$ in  $V$ makes sense, and $[V] := \sum_{\Bd \in \CR'} m_{\Bd,V} [L(\Bd)]$ is a well-defined element in $K_0(\BGG)$.  Multiplication in $K_0(\BGG)$ is induced by $[V][W] = [V \otimes W]$ for $V, W$ in category $\BGG$. 

 Since $\qc$ respects exact sequences, the assignment $[V] \mapsto \qc(V)$ extends uniquely to a group homomorphism  $\qc: K_0(\BGG) \longrightarrow \CEl$, called the q-character map.

\begin{theorem}\cite{Kn}  \label{thm: q-char ring morphism}
The q-character map is an injective homomorphism of rings. 
\end{theorem}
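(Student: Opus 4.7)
The plan is to verify the two nontrivial parts of the statement separately: that $\qc$ respects the multiplicative structure (additivity having been built into its construction), and that $\qc$ is injective.

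For multiplicativity, given $V, W$ in $\BGG$, the goal is to show $\qc(V\otimes W) = \qc(V)\,\qc(W)$ as formal sums in $\CEl$. I would fix a weight $\nu$; condition (O2) on both factors makes
$$ (V\otimes W)_\nu = \bigoplus_{\lambda+\mu=\nu} V_\lambda \otimes W_\mu $$
a \emph{finite} direct sum. I would equip the index set with the partial order $(\lambda,\mu)\leq(\lambda',\mu')$ iff $\lambda-\lambda'\in \BQ_-$, and filter $(V\otimes W)_\nu$ accordingly. By Lemma \ref{lem: coproduct estimation}, the operator $\Delta(\xi_i(u)) - \xi_i(u)\otimes\xi_i(u)$ sends $V_\lambda\otimes W_\mu$ into the sum of $V_{\lambda-\beta}\otimes W_{\mu+\beta}$ with $h(\beta)\geq 1$, which lies in strictly lower filtration pieces. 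Hence on the associated graded $\Delta(\xi_i(u))$ acts as $\xi_i(u)\otimes\xi_i(u)$. Triangularizing $\xi_i(u)$ on each factor, I would then read off the generalized $\Delta(\xi_i(u))$-eigenvalues on $V_\lambda\otimes W_\mu$ as the products $\Be_i(u)\Bf_i(u)$ with multiplicity $\dim V_\Be \cdot \dim W_\Bf$; summing over $\nu$ yields $\qc(V\otimes W) = \qc(V)\qc(W)$, and membership in $\CEl$ follows from (E1)--(E2) for the factors.

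For injectivity, suppose $x = \sum_{\Bd\in\CR'} n_\Bd [L(\Bd)] \in \ker\qc$ with nonempty support $S := \{\Bd : n_\Bd\neq 0\}$. Since $\bigoplus_{\Bd\in S} L(\Bd)^{\oplus|n_\Bd|}$ lies in $\BGG$, the set $\varpi(S)$ sits inside a finite union of cones $\lambda_j + \BQ_-$, so it admits a maximal element. Fix $\Bd_0\in S$ with $\varpi(\Bd_0)$ maximal. Corollary \ref{cor: layer structure of l-weights} yields $\lwt(L(\Bd)) \subseteq \Bd\cdot\lCQ_-$ for every $\Bd$. If $\Bd_0 = \Bd\,q$ for some $\Bd\in S$ and $q\in\lCQ_-$, then maximality of $\varpi(\Bd_0)$ forces $\varpi(q)=0$; since $\lCQ_-$ is a submonoid generated by the $A_{j,x}^{-1}$ of weight $-\alpha_j$ and the $\alpha_j$ are linearly independent, this forces $q=1$, hence $\Bd=\Bd_0$. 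Therefore the coefficient of the monomial $\Bd_0$ in $\qc(x) = \sum n_\Bd\, \qc(L(\Bd))$ equals $n_{\Bd_0}$, contradicting $\Bd_0\in S$.

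The main obstacle is the triangular-form argument inside the multiplicativity step: because $\Delta(\xi_i(u))$ is a formal power series of commuting operators rather than a single endomorphism, I would need to argue that the upper triangularity of each coefficient $\Delta(\xi_{i,n})$ with respect to the filtration above, together with the commutativity of the full family $\{\xi_{i,n}\}$, is enough to identify the generalized eigenspace data on $V_\lambda\otimes W_\mu$ with that of $\xi_i(u)\otimes\xi_i(u)$ on the tensor product of triangular bases. Once this is in place, injectivity is a routine maximal-monomial argument powered by Corollary \ref{cor: layer structure of l-weights} and the boundedness condition (O2).
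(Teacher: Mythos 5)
The paper does not prove this statement; it is quoted from Knight's work \cite{Kn}, so there is no in-paper argument to compare against. Your proof is the standard one and is essentially correct: multiplicativity via the upper-triangularity of $\Delta(\xi_i(u))$ modulo $\xi_i(u)\otimes\xi_i(u)$ coming from Lemma \ref{lem: coproduct estimation}, and injectivity via a maximal-$\ell$-weight argument using Corollary \ref{cor: layer structure of l-weights} and the fact that $\varpi$ kills no nontrivial element of $\lCQ_-$. The injectivity half is complete as written (the existence of a $\varpi$-maximal element of the support follows from (O2), since the part of the support above any fixed weight is finite).

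One small repair is needed in the multiplicativity step: your partial order $(\lambda,\mu)\leq(\lambda',\mu')$ iff $\lambda-\lambda'\in\BQ_-$ is not the one that Lemma \ref{lem: coproduct estimation} controls. The error term lands in $\sum_{h(\beta)\geq 1}\Yang_{-\beta}\otimes\Yang_{\beta}$, and such $\beta$ need not lie in $\oplus_i\BZ_{\geq 0}\alpha_i$ (only its height is constrained), so $V_{\lambda-\beta}\otimes W_{\mu+\beta}$ need not be strictly lower for your order. The fix is to filter by the height $h(\lambda-\lambda_0)$ within each $\BQ$-coset of first-factor weights (a well-defined integer, since $\lambda+\mu=\nu$ is fixed and the error term moves $\lambda$ by elements of $\BQ$); the error term strictly decreases this height, and the rest of your argument — simultaneous triangularization of the commuting family $\{\Delta(\xi_{i,n})\}$ on the finite-dimensional space $(V\otimes W)_\nu$ and reading off joint generalized eigenvalues from the diagonal — goes through verbatim. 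The linear-algebra point you flag as the "main obstacle" is indeed the only content there, and it is the standard fact that for a commuting family preserving a flag, the joint generalized eigenspace multiplicities are computed on the associated graded.
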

We conclude as in \cite[Remark 3.13]{HJ} that $K_0(\BGG)$ is a commutative ring. 

\begin{example}  \label{example: KR sl2}
Take $\Glie = \mathfrak{sl}_2$, so that $I = \{1\}$ and $c_{11} = 2 = 2d_1$. For $k,x \in \BC$, there is a representation $\SL^{x+k}_x$ of $Y_{\hbar}(\mathfrak{sl}_2)$ on $\oplus_{i=0}^{\infty} \BC v_i$ defined by \cite[Proposition 2.6]{CP}:
    \begin{gather*}
              x_{1,n}^+ v_i = (-x + 1 - i)^n \hbar^n v_{i-1}, \quad x_{1,n}^- v_i = (-x-i)^n\hbar^n (i+1)(k-i) v_{i+1},  \\
              \xi_1(u) v_i = \frac{(u+(x-1)\hbar)(u+(x+k)\hbar)}{(u+(x+i-1)\hbar)(u+(x+i)\hbar)} v_i.                   
    \end{gather*}
It is in category $\BGG$ with q-character $\qc(\SL_x^{x+k}) = \frac{\Psi_{1,x+k}}{\Psi_{1,x}} \qc(\SL_x^x)$ and
$$ \qc(\SL^{x}_x) = 1 + A_{1,x}^{-1} + A_{1,x}^{-1}A_{1,x+1}^{-1}  + A_{1,x}^{-1}A_{1,x+1}^{-1} A_{1,x+2}^{-1} + \cdots. $$
If $k \in \BZ_{\geq 0}$, then  $v_0, v_1, \cdots, v_k$ span a submodule $\BC_x^{k+1} \cong L(\frac{\Psi_{1,x+k}}{\Psi_{1,x}})$. Comparing q-characters gives $[\BC_x^2] [\SL_y^x] = [\SL_y^{x+1}] + [\SL_y^{x-1}]$ for $y \in \BC$. Restricted to $\mathfrak{sl}_2$ this is Equation \eqref{equ: sl2}. Indeed, $\BC_x^2 \cong \BC^2$, and $\SL_y^x$ is the graded dual of the Verma module $M_{x-y}$ twisted by Cartan involution so that their isomorphism classes coincide.
\end{example}

\section{Kirillov--Reshetikhin modules} \label{sec: KR}
In this section we study two families of finite-dimensional irreducible modules and their q-character properties. For
 $(i, k, x) \in I \times \BZ_{\geq 0} \times \BC$ define
\begin{equation} \label{equ: KR dominant}
\Bw_{k,x}^{(i)} := Y_{i,x+\frac{1}{2}d_i} Y_{i,x+\frac{3}{2}d_i} \cdots Y_{i,x+(k-\frac{1}{2})d_i} = \frac{\Psi_{i,x+kd_i}}{\Psi_{i,x}} \in \lCP_+.
\end{equation}
The finite-dimensional irreducible module $L(\Bw_{k,x}^{(i)})$ is denoted by $W_{k,x}^{(i)}$ and called {\it Kirillov--Reshetikhin module} \cite{KR}, KR module for short. 

Let $\lCQ_x^-$ be the submonoid of $\lCQ_-$ generated by the $A_{j,w}^{-1}$ for $j \in I$ and $w \in x+\frac{1}{2}\BZ$. 

\begin{theorem} \cite{H1, Nakajima}  \label{thm: l-weights KR}
Let $(i, x) \in I \times \BC$. For $k \in \BZ_{>0}$ and $0 \leq l < k$:
\begin{itemize}
\item[(a)] $\Bw_{k,x}^{(i)}A_{i,x}^{-1} A_{i,x+d_i}^{-1}  \cdots A_{i,x+ld_i}^{-1}$ is an $\ell$-weight of $W_{k,x}^{(i)}$ of multiplicity one; 
\item[(b)] any $\ell$-weight of $W_{k,x}^{(i)}$ different from (a) and from $\Bw_{k,x}^{(i)}$ must be of the form $\Bw_{k,x}^{(i)} A_{i,x}^{-1} A_{i',z}^{-1} \Be$ where $\Be \in \lCQ_x^-$, $i' \in I \setminus \{i\}$, and $z$ belongs to the subset $\mathfrak{Z}_{k,x}^{(ii')}$ of $x + \{-\frac{3}{2}, -1, -\frac{1}{2}, 0, \frac{1}{2} \}$ defined at the second column of the table:
\begin{equation} \label{tab: denominator}
\begin{tabular}{|c|c|}
  \hline 
    conditions on $(i', k)$ & the set $\mathfrak{Z}_{k,x}^{(ii')}$  \\
  \hline    
    $c_{ii'} = 0$  & $\emptyset$  \\
    \hline
    $c_{ii'} = -1$ or $k = 1$ & $\{x-d_{ii'}\}$  \\
    \hline
    $c_{ii'} = -2$ and $k > 1$ & $\{x-1, x\}$ \\
    \hline 
    $c_{ii'} = -3$ and $k = 2$ & $\{x-\frac{3}{2}, x-\frac{1}{2}\}$  \\
    \hline
    $c_{ii'} = -3$ and $k > 2$ & $\{x-\frac{3}{2}, x-\frac{1}{2}, x+\frac{1}{2}\}$  \\
  \hline    
\end{tabular}
\end{equation}
\end{itemize}
As $k$ tends to infinity, $\nqc(W_{k,x}^{(i)})$ converges to a power series of the $A_{j,w}^{-1} \in \lCQ_x^-$.
\end{theorem}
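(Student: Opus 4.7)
The approach is to transport the analogous result for Kirillov--Reshetikhin modules over quantum affine algebras---established by Hernandez \cite{H1} and Nakajima \cite{Nakajima}---to the Yangian setting via the Gautam--Toledano Laredo functor mentioned at the start of this section. Concretely, the GTL functor provides a tensor-compatible correspondence between suitable categories of finite-dimensional modules over $U_q(\widehat{\Glie})$ and $\Yang$, under which the quantum affine KR module whose Drinfeld polynomial is supported at $q^{2x}, q^{2(x+d_i)}, \ldots, q^{2(x+(k-1)d_i)}$ on node $i$ corresponds to $W_{k,x}^{(i)}$, and the l-weight monomials $Y^{\mathrm{aff}}_{j,q^{2z}}$, $A^{\mathrm{aff}}_{j,q^{2z}}$ correspond respectively to $Y_{j,z}$, $A_{j,z}$ of the Yangian, multiplicities of l-weight spaces being preserved.

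With the dictionary set up, part (a) reduces to the multiplicity-one claim for the ``diagonal tail'' produced by the Frenkel--Mukhin algorithm at node $i$, which is a direct consequence of the explicit quantum affine KR q-character formula (it is the chain obtained by applying only $A_{i,*}^{-1}$ to the highest l-weight, and no other Frenkel--Mukhin branch can reproduce these monomials). Part (b) is the content of the quantum affine KR q-character description in \cite{H1, Nakajima}: any l-weight of $W_{k,x}^{(i)}$ outside the diagonal tail of (a) must contain a first non-diagonal Frenkel--Mukhin step at some $i' \ne i$, producing a factor $A_{i',z}^{-1}$; and the Cartan entry $c_{ii'}$ together with the length $k$ of the string defining $W_{k,x}^{(i)}$ restrict $z$ exactly to the set $\mathfrak{Z}_{k,x}^{(ii')}$ recorded in the table. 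The case analysis merely encodes the standard length-two and length-three descent patterns forced by $c_{ii'} \in \{0,-1,-2,-3\}$ and by whether the string is long enough ($k>1$, $k>2$) to accommodate a deeper drop.

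For the convergence claim, fix $\Be \in \lCQ_x^-$ and set $n := h(\varpi(\Be^{-1}))$. Iterating (b) shows that any l-weight $\Bw_{k,x}^{(i)}\Be$ of height $n$ appearing in $W_{k,x}^{(i)}$ must be a product of at most $n$ factors $A_{j,w}^{-1}$ with $w$ in a finite window of $x + \tfrac12\BZ$ whose size depends only on $n$ and on the Cartan matrix, independently of $k$. Hence once $k$ exceeds a bound growing linearly with $n$, the multiplicity of $\Bw_{k,x}^{(i)}\Be$ in $W_{k,x}^{(i)}$ no longer depends on $k$: adding one more factor $Y_{i,x+(k-\frac{1}{2})d_i}$ to $\Bw_{k,x}^{(i)}$ affects only the top strata of the l-weight poset. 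This yields termwise stabilization and thus convergence of $\nqc(W_{k,x}^{(i)})$ in $\CEl$ to a power series in the $A_{j,w}^{-1}$ with $w \in x+\tfrac12\BZ$. The main obstacle in this plan is the careful verification of the GTL dictionary at the required precision---namely, that the spectral-parameter substitution is correctly identified and that l-weight multiplicities literally match on both sides---since once this is granted, (a), (b), and the stabilization all follow as combinatorial consequences of the quantum affine theory.
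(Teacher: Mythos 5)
Your proposal takes essentially the same route as the paper: the theorem is not proved from scratch but transported from the quantum affine KR q-character results of Hernandez and Nakajima through the Gautam--Toledano Laredo functor, using the dictionary $e_{\Pi}(Y_{i,a})=\mathcal{Y}_{i,q^{-2a}}$, $e_{\Pi}(A_{i,a})=\mathcal{A}_{i,q^{-2a}}$, which matches q-characters and hence $\ell$-weight multiplicities. The only additions in the paper that you omit are minor: the root-of-unity restriction ($\hbar\notin\mathbb{Q}$) on the quantum affine side is discharged via the isomorphism $Y_{\hbar}(\Glie)\cong Y_{1}(\Glie)$, and the convergence of $\nqc(W_{k,x}^{(i)})$ is likewise imported from the quantum affine results rather than re-derived by your (somewhat heuristic) stabilization argument, which by itself would need multiplicity control beyond part (b).
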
 
This is a translation of q-character property of KR modules over the quantum affine algebra $U_q(\widehat{\Glie})$ due to H. Nakajima \cite{Nakajima} and D. Hernandez \cite{H1}. Here $q = e^{\pi \iota \hbar}$ and $\hbar \notin \mathbb{Q}$. The irrationality assumption is inessential for Yangians since the Hopf algebras $Y_{\hbar}(\Glie)$ and $Y_{1}(\Glie)$ are isomorphic for all $\hbar \in \BC^{\times}$. One applies the inverse functor of Gautam--Toledano Laredo \cite[\S 6]{GTL}, which sends finite-dimensional irreducible $U_{q}(\widehat{\Glie})$-modules to irreducible $\Yang$-modules, and matches the q-character maps of Frenkel--Reshetikhin \cite{FR} and H.~Knight \cite{Kn}. Let $\mathcal{Y}_{i,a},\ \mathcal{A}_{i,a}$ denote the monomials $Y_{i,a},\ A_{i,a}$ for $U_q(\widehat{\Glie})$ in \cite[\S 2.3.1]{H1}. At the level of q-characters, the ring morphism $e_{\Pi}$ of \cite[\S 7.7]{GTL} is determined by: \footnote{This is a comparison of rational functions $\frac{u-a\hbar+d_i\hbar}{u-a\hbar}$ and $q_i^{-1}\frac{q_i^2 z^{-1} - q^{2a}}{z^{-1} - q^{2a}}$ from \cite[\S 1.10]{GTL}. We take $z^{-1}$ because of different conventions of $\Psi_i(z)^{\pm}$ in \cite[(1.2)]{GTL} and \cite[\S 2.3.1]{H1}.}
\begin{equation} \label{equ: GTL}
e_{\Pi}(Y_{i,a}) = \mathcal{Y}_{i,q^{-2a}},\quad e_{\Pi}(A_{i,a}) = \mathcal{A}_{i,q^{-2a}} \quad \mathrm{for}\ (i,a) \in I \times \BC. 
\end{equation}
Our condition $w \in x + \frac{1}{2} \BZ$ translates as $q^{-2w} \in  q^{-2x + \BZ}$ in \cite[Theorem 3.4 (2)]{H1}.

Part (a) of Theorem \ref{thm: l-weights KR} generalizes Example \ref{example: KR sl2}. For part (b), let $1 \leq m \leq k$. There is a unique decomposition of $\ell$-weights in terms of ratios of $\Psi$:
\begin{align*}
 &\Bw_{k,x}^{(i)}  A_{i,x}^{-1} A_{i,x+d_i}^{-1} \cdots A_{i,x+(m-1)d_i}^{-1}  \\
  = & \frac{\Psi_{i,x-d_i} \Psi_{x+kd_i}}{\Psi_{i,x+(m-1)d_i} \Psi_{i,x+md_i}} \times \prod_{i': c_{ii'} < 0} \prod_{z \in \mathfrak{Z}_{m,x}^{(ii')}}  \frac{\Psi_{i',z+m_z d_i d_{i'}}}{\Psi_{i',z}}
\end{align*}
where each $m_z$ is an integer such that $1 \leq m_z d_{i'} \leq m$. The presence of $\Psi_{i',z}$ in the denominator gives rise to the factor $A_{i',z}^{-1}$ in part (b). See \cite[Lemma 5.5]{H1}.

We recall an important notion due to Frenkel--Mukhin \cite[\S 6]{FM}.
Since the abelian group $\lCP$ is freely generated by the $Y_{j,x}$, each element $\Bm \in \lCP$ is factorized uniquely as a finite product
$\prod_{(i,x) \in I \times \BC} Y_{i,x}^{c_{i,x}(\Bm)}$ where the exponents $ c_{i,x}(\Bm)$ are integers. Call $\Bm$ {\it right-negative} if $\Bm \neq 1$ and 
      \begin{itemize}
         \item[(RN)] if $c_{i,x}(\Bm) >0$ for certain $(i,x) \in I \times \BC$, then there exists $(j,y) \in I \times \BC$ such that $c_{j,y}(\Bm) < 0$ and $y - x \in \frac{1}{2}\BZ_{<0}$.
      \end{itemize}
      A right-negative $\ell$-weight is never dominant. The product of two (and hence finitely many) right-negative $\ell$-weights is still right-negative.

\begin{example} \cite{FM} \label{example: simple roots right-negative}
The inverse of a generalized simple root is right-negative:
$$ A_{j,x}^{-1} = Y_{j,x-\frac{1}{2}d_j}^{-1} Y_{j,x+\frac{1}{2}d_j}^{-1}  \prod_{i:c_{ij}=-1} Y_{i,x} \prod_{i:c_{ij}=-2} Y_{i,x+\frac{1}{2}}Y_{i,x-\frac{1}{2}}\prod_{i:c_{ij}=-3} Y_{i,x+1}Y_{i,x}Y_{i,x-1}. $$
Consequently, if $\Bm \in \lCP$ is right-negative, then so is any element of $\Bm \lCQ_-$.
\end{example}

Now we discuss a second family of finite-dimensional irreducible modules. For $(i, x, k, t) \in I \times \BC \times \BZ_{>0} \times \BZ_{\geq 0}$, the following is a dominant $\ell$-weight:
\begin{equation}  \label{def: Demazure}
\Bd_{k,x}^{(i,t)} = \Bw_{k,x-(k+1)d_i}^{(i)} \Bw_{k+t,x-kd_i}^{(i)} A_{i,x-d_i}^{-1} A_{i,x-2d_i}^{-1} \cdots A_{i,x-kd_i}^{-1}.
\end{equation}
The finite-dimensional irreducible module $L(\Bd_{k,x}^{(i,t)})$ is denoted by $D_{k,x}^{(i,t)}$.
 
\begin{theorem}  \label{thm: KR and Demazure}
For $(i, k, t) \in I \times \BZ_{>0} \times  \BZ_{\geq 0}$, there exists (up to scalar multiple) a unique short exact sequence of finite-dimensional $Y_{\hbar}(\Glie)$-modules
\begin{equation*} 
0 \longrightarrow D_{k,(k+1)d_i}^{(i,t)} \longrightarrow  W_{k,0}^{(i)} \otimes W_{k+t,d_i}^{(i)} \longrightarrow W_{k-1,d_i}^{(i)} \otimes W_{k+t+1,0}^{(i)} \longrightarrow 0.
\end{equation*}
\end{theorem}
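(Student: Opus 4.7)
Two observations anchor the argument. First, the highest $\ell$-weights of the two KR tensor products match, by a telescoping computation:
\[
\Bw_{k,0}^{(i)}\Bw_{k+t,d_i}^{(i)}=\frac{\Psi_{i,kd_i}\Psi_{i,(k+t+1)d_i}}{\Psi_{i,0}\Psi_{i,d_i}}=\Bw_{k-1,d_i}^{(i)}\Bw_{k+t+1,0}^{(i)}.
\]
Second, by standard cyclicity of KR tensor products at minimally spaced spectral parameters, both $W_{k,0}^{(i)}\otimes W_{k+t,d_i}^{(i)}$ and $W_{k-1,d_i}^{(i)}\otimes W_{k+t+1,0}^{(i)}$ are highest $\ell$-weight modules generated by the tensor of their natural cyclic vectors. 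Given this, the plan is: (a) build a surjection from source to target, (b) exhibit a highest $\ell$-weight vector of $\ell$-weight $\Bd_{k,(k+1)d_i}^{(i,t)}$ in the source, and (c) close with a q-character/dimension count via Theorem \ref{thm: q-char ring morphism}.

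\textbf{Step 1 (surjection).} Produce a non-zero Yangian morphism $\pi\colon W_{k,0}^{(i)}\otimes W_{k+t,d_i}^{(i)}\to W_{k-1,d_i}^{(i)}\otimes W_{k+t+1,0}^{(i)}$; by target cyclicity any such morphism is automatically surjective. The cleanest route is to transport the corresponding short exact sequence from $U_q(\widehat{\Glie})$ through the Gautam--Toledano Laredo functor of \cite{GTL}, using that the analogous extended T-system (for $t=0$ the Nakajima--Hernandez T-system; for general $t\geq 0$, \cite{FoH}) is already available in the quantum affine setting. Alternatively, $\pi$ may be built by fusion of rational R-matrices, realizing $W_{k+t+1,0}^{(i)}$ as a subquotient of $W_{1,0}^{(i)}\otimes W_{k+t,d_i}^{(i)}$ and applying the corresponding intertwiner with co-associativity.

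\textbf{Step 2 (kernel and exactness).} Guided by \eqref{def: Demazure}, locate a vector $v$ of $\ell$-weight $\Bd_{k,(k+1)d_i}^{(i,t)}$ in the source that is annihilated by all $x_j^+(u)$. A natural Demazure-type candidate is $\omega\otimes w_k$ plus lower-weight corrections, where $\omega$ is the cyclic vector of $W_{k,0}^{(i)}$ and $w_k$ is the unique (by Theorem \ref{thm: l-weights KR}(a)) $\ell$-weight vector of $W_{k+t,d_i}^{(i)}$ of $\ell$-weight $\Bw_{k+t,d_i}^{(i)}A_{i,d_i}^{-1}\cdots A_{i,kd_i}^{-1}$; the corrections cancel the mixed error from Lemma \ref{lem: coproduct estimation}. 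Verifying $x_j^+(u)v=0$ reduces, via Remark \ref{rem: q mult} and the explicit KR structure, to an $\mathfrak{sl}_2$-type computation. Then $\Yang v$ is a highest $\ell$-weight submodule surjecting onto $D_{k,(k+1)d_i}^{(i,t)}$. Crucially, by Corollary \ref{cor: layer structure of l-weights} together with Theorem \ref{thm: l-weights KR}: every non-top $\ell$-weight of $W_{k-1,d_i}^{(i)}$ carries at most $k-1$ consecutive $A_i^{-1}$ factors starting at $A_{i,d_i}^{-1}$, while every non-top $\ell$-weight of $W_{k+t+1,0}^{(i)}$ contains the factor $A_{i,0}^{-1}$; since $\Bd_{k,(k+1)d_i}^{(i,t)}$ has $k$ consecutive $A_i^{-1}$ factors starting at $A_{i,d_i}^{-1}$ and no $A_{i,0}^{-1}$, it cannot be an $\ell$-weight of the target, so $\pi(v)=0$ and $\Yang v\subseteq\ker\pi$. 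To conclude, establish the q-character identity
\[
\qc(W_{k,0}^{(i)})\,\qc(W_{k+t,d_i}^{(i)})=\qc(D_{k,(k+1)d_i}^{(i,t)})+\qc(W_{k-1,d_i}^{(i)})\,\qc(W_{k+t+1,0}^{(i)}),
\]
which pins down the composition factors of the source and forces $\Yang v\cong D_{k,(k+1)d_i}^{(i,t)}$ with $\ker\pi=\Yang v$. Uniqueness up to scalar follows from one-dimensionality of the relevant hom-spaces by Schur's lemma on the highest $\ell$-weight lines.

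\textbf{Main obstacle.} The crux is the q-character identity above, equivalently the determination of the dominant monomials in $\nqc(D_{k,(k+1)d_i}^{(i,t)})$ via a Frenkel--Mukhin right-negativity analysis (Example \ref{example: simple roots right-negative}). In the simply-laced case $D_{k,(k+1)d_i}^{(i,t)}$ factors cleanly as a tensor product of smaller KR-type pieces (Remark \ref{rem: factorization} of the introduction), and \cite{Z4}'s argument transports with minimal changes. In the non simply-laced case one must first establish this tensor product factorization---the technical heart of the paper---by exploiting the layered $\ell$-weight description of Corollary \ref{cor: layer structure of l-weights} and careful inspection of the table $\mathfrak{Z}_{k,x}^{(ii')}$ of Theorem \ref{thm: l-weights KR}.
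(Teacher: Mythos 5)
Your outline (build a surjection $\pi\colon V'\to V$ with $V'=W_{k,0}^{(i)}\otimes W_{k+t,d_i}^{(i)}$ and $V=W_{k-1,d_i}^{(i)}\otimes W_{k+t+1,0}^{(i)}$, then identify $\ker\pi$ with $D_{k,(k+1)d_i}^{(i,t)}$) has the right shape, but the step you yourself flag as the "main obstacle" is a genuine gap, and the route you suggest for closing it is circular. Your argument terminates in the q-character identity $\qc(V')=\qc(D_{k,(k+1)d_i}^{(i,t)})+\qc(V)$, which presupposes knowledge of $\qc(D_{k,(k+1)d_i}^{(i,t)})$; but this module is exactly the new object being introduced, and the tensor-product factorization of Remark \ref{rem: factorization} that you invoke to compute it is proved in the paper only \emph{later}, as a consequence of Corollary \ref{cor: l-weight Demazure} -- which itself is deduced from the present theorem. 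The paper avoids this entirely: it never computes $\qc(D_{k,(k+1)d_i}^{(i,t)})$. Instead it counts \emph{dominant} $\ell$-weights of $V$ and $V'$ via right-negativity (they are multiplicity-free; those of $V'$ are exactly those of $V$ together with $\Bd_{k,(k+1)d_i}^{(i,t)}$), and then proves two structural facts by Chari's reduction to the $\mathfrak{sl}_2$-type subalgebra $Y_i$ and the Chari--Pressley string criteria: (i) $V$ is \emph{irreducible}, and (ii) $V'$ is a highest $\ell$-weight module. Given (i), $V'$ can have at most two irreducible subquotients, namely $L(\Bw)=V$ and $L(\Bd_{k,(k+1)d_i}^{(i,t)})$, each once; given (ii), $V'$ surjects onto its irreducible quotient $V$, and the kernel is forced to be $D_{k,(k+1)d_i}^{(i,t)}$. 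No explicit singular vector and no q-character of $D$ is needed.

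Two further points. First, your Step 1 does not actually produce the map $\pi$: cyclicity of the target $V$ does not by itself yield a nonzero morphism $V'\to V$ (a highest $\ell$-weight module receives maps into its irreducible \emph{quotient}, not into an arbitrary cyclic module with the same highest $\ell$-weight), and transporting the sequence through the Gautam--Toledano Laredo functor requires a monoidal compatibility and a quantum-affine extended T-system for general $t$ that are not available off the shelf; note also that \cite{FoH} only establishes that $V$ is an irreducible subquotient of $V'$, not the exact sequence. The irreducibility of $V$ -- which you do not address -- is precisely what makes the surjection automatic. Second, your explicit candidate singular vector $\omega\otimes w_k$ "plus corrections" sits at height $k$ in the $\alpha_i$-direction, and verifying $x_j^+(u)v=0$ there is not a routine $\mathfrak{sl}_2$ computation; the paper's dominant-weight count renders this unnecessary. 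If you want to salvage your plan, replace the q-character identity by the two Chari-type statements (i) and (ii) above; that is where the actual work lies, including in the simply-laced case.
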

\begin{proof}
Let us introduce the tensor products of $Y_{\hbar}(\Glie)$-modules
$$ V :=  W_{k-1,d_i}^{(i)} \otimes W_{k+t+1,0}^{(i)},\quad V' := W_{k,0}^{(i)} \otimes W_{k+t,d_i}^{(i)}.  $$
They share a common $\ell$-weight $ \Bw_{k-1,d_i}^{(i)} \Bw_{k+t+1,0}^{(i)} = \Bw_{k,0}^{(i)} \Bw_{k+t,d_i}^{(i)}$, denoted by $\Bw$.

{\bf Step 1.} We count the dominant $\ell$-weights in $V$ and $V'$. A lengthy but straightforward calculation based on Table \eqref{tab: denominator} and Example \ref{example: simple roots right-negative} indicates that the following $\ell$-weights are always right-negative (compare with \cite[Lemma 5.2]{H1}):
$$\Bw_{k,0}^{(i)} A_{i,0}^{-1} \Bw_{k+t,d_i}^{(i)},\quad \Bw_{k,0}^{(i)} \Bw_{k+t,d_i}^{(i)} A_{i,d_i}^{-1} A_{i',z}^{-1}\quad \mathrm{for}\ i' \in I \setminus \{i\}\ \mathrm{and}\ z \in \mathfrak{Z}_{k+t,d_i}^{(ii')}.$$
Now the arguments of \cite[Lemma 5.6]{H1} work here. All dominant $\ell$-weights of $V$ and of $V'$ are multiplicity-free.  Those of $V$ are listed below:
$$ \Bw,\quad  \Bw A_{i,d_i}^{-1},\quad  \Bw A_{i,d_i}^{-1} A_{i,2d_i}^{-1}, \cdots,\quad \Bw A_{i,d_i}^{-1} A_{i,2d_i}^{-1}\cdots A_{i,(k-1)d_i}^{-1}.  $$
Dominant $\ell$-weights of $V'$ are the above ones together with $\Bd_{k,(k+1)d_i}^{(i,t)}$.
 
We show that $V$ is irreducible and $V'$ projects onto $V$. The irreducibility would force $V'$ to have precisely two irreducible sub-quotients, while the projection would be completed to the desired short exact sequence. 

Let $Y_i$ denote the subalgebra of $\Yang$ generated by the $x_{i,n}^{\pm}, \xi_{i,n}$ for $n \in \BZ_{\geq 0}$. As an algebra it is naturally isomorphic to the Yangian $Y_{d_i\hbar}(\mathfrak{sl}_2)$, so the coproduct of the latter induces a coproduct $\Delta_i$ of $Y_i$. Combing \cite[(1.3)]{CP} with \cite{Kn}, we have
 $$\Delta(x_{i,0}^{\pm}) = \Delta_i(x_{i,0}^{\pm}),\quad  \Delta(\xi_{i,1}) - \Delta_i(\xi_{i,1}) \in \sum_{\alpha \succ 0, \alpha \neq \alpha_i} \Glie_{-\alpha} \otimes \Glie_{\alpha}, $$
the summation over  positive roots of $\Glie$ distinct from $\alpha_i$.
 
{\bf Step 2.} To show that $V$ is irreducible, let $S$ be the irreducible sub-quotient of $V$ of highest $\ell$-weight $\Bw$. It suffices to show that all the dominant $\ell$-weights of $V$ appear in $S$. By Step 1, it is enough to show that for each $n \in \BZ_{>0}$ the weight spaces $S_{\varpi(\Bw)-n\alpha_i}$ and $V_{\varpi(\Bw)-n\alpha_i}$  are of the same dimension. Note that the former is already bounded above by the latter. We are reduce to the inequality
$$ \dim S_{\varpi(\Bw) - n \alpha_i} \geq \dim V_{\varpi(\Bw)-n\alpha_i}. $$

Fix a highest $\ell$-weight vector $\omega$ of $S$. The subspace $Y_i \omega$, as a $Y_{d_i\hbar}(\mathfrak{sl}_2)$-module, is of highest $\ell$-weight 
$$ \frac{u+k d_i\hbar}{u+d_i\hbar}  \times \frac{u + (k+t+1)d_i\hbar}{u}. $$
The two components $\frac{u+k d_i\hbar}{u+d_i\hbar} $ and $ \frac{u + (k+t+1)d_i\hbar}{u}$, viewed as highest $\ell$-weights for the Yangian $Y_{d_i\hbar}(\mathfrak{sl}_2)$ and then identified with strings $S_{k-1}(-\frac{1}{2}k)$ and $S_{k+t+1}(-\frac{1}{2}k-\frac{1}{2}t)$ via \cite[Corollary 2.7]{CP}, are non-interacting in the sense of \cite[Definition 3.2]{CP}; indeed, the first string is contained in the second. By \cite[Theorem 4.1]{CP}, the tensor product of irreducible modules attached to these two strings is irreducible, so the $Y_{d_i\hbar}(\mathfrak{sl}_2)$-module $Y_i \omega$ projects onto $L(\frac{ \Psi_{1,k}}{\Psi_{1,1}}) \otimes L( \frac{\Psi_{1,k+t+1}}{\Psi_{1,0}})$ in the notations of Example \ref{example: KR sl2}. The weight spaces $S_{\varpi(\Bw)-n\alpha_i}$ and $(Y_i \omega)_{(2k+t)\varpi_1-n\alpha_1}$ are equal as they are both  spanned by the $x_{i,k_1}^- x_{i,k_2}^- \cdots x_{i,k_n}^- \omega$ for $k_1,k_2, \cdots, k_n \in \BZ_{\geq 0}$. Therefore,
\begin{align*}
& \dim S_{\varpi(\Bw)-n\alpha_i} = \dim (Y_i \omega)_{(2k+t)\varpi_1-n\alpha_1}  \\
\geq &  (L(\frac{ \Psi_{1,k}}{\Psi_{1,1}}) \otimes L( \frac{\Psi_{1,k+t+1}}{\Psi_{1,0}}))_{(2k+t)\varpi_1-n\alpha_1}  = \dim V_{\varpi(\Bw)-n\alpha_i}.
\end{align*}
The last identity comes from the tensor product factorization of $V$ and Theorem \ref{thm: l-weights KR}.

{\bf Step 3.} Let $\omega_1$ and $\omega_2$ be highest $\ell$-weight vectors of $W_{k,0}^{(i)}$ and $W_{k+t,d_i}^{(i)}$ respectively. Then the submodule $Z$  of $V'$ generated by the highest $\ell$-weight vector $\omega_1 \otimes \omega_2$ projects onto $V$. We prove that for each $n \in \BZ_{>0}$ the weight spaces $Z_{\varpi(\Bw)-n\alpha_i}$ and $V_{\varpi(\Bw)-n\alpha_i}'$ are of the same dimension. This would imply that $Z$ and $V$ are non-isomorphic (as the dimensions of the corresponding weight spaces for $n = k$ differ), $Z$ admits at least two irreducible sub-quotients, and so $Z = V'$. 

Let $\alpha$ be a positive root of $\Glie$ different from $\alpha_i$.  The subspace $\Glie_{\alpha} Y_i \omega_2$, if non-zero, would imply that $\alpha - m \alpha_i + (k+t)\varpi_i$ is a weight of $W_{k+t,d_i}^{(i)}$ for certain $m \in \BZ$ and so it must be bounded above by $(k+t)\varpi_i$, meaning $\alpha - m\alpha_i$ is a sum of negative roots of $\Glie$, impossible. Therefore $\Glie_{\alpha} Y_i \omega_2 = 0$ and
 $$ \Delta(x) (v_1 \otimes v_2) = \Delta_i(x) (v_1 \otimes v_2) \in Y_i \omega_1 \otimes Y_i \omega_2 \subset Z $$
 for $x \in \{x_{i,0}^{\pm}, \xi_{i,1} \}$ and $v_1 \otimes v_2 \in Y_i \omega_1 \otimes Y_i \omega_2$. Since the algebra $Y_i$ is generated by $x_{i,0}^{\pm}$ and $\xi_{i,1}$, the above identity holds true for all $x \in Y_i$. In other words, $Y_i \omega_1 \otimes Y_i \omega_2$ is actually a sub-$Y_i$-module of $Z$ and it is identified with the ordinary tensor product $Y_i \omega_1 \otimes Y_i \omega_2$ of $Y_{d_i\hbar}(\mathfrak{sl}_2)$-modules defined by $\Delta_i$. 
 
 By Theorem \ref{thm: l-weights KR}, the $Y_{d_i\hbar}(\mathfrak{sl}_2)$-modules $Y_i \omega_1$ and $Y_i \omega_2$ are isomorphic to $L(\frac{\Psi_{1,k}}{\Psi_{1,0}})$ and $L(\frac{\Psi_{1,k+t+1}}{\Psi_{1,1}})$ respectively.
 To prove that dimensions match, as in Step 2 it suffices to establish that the $Y_{d_i\hbar}(\mathfrak{sl}_2)$-module $L(\frac{\Psi_{1,k}}{\Psi_{1,0}}) \otimes L(\frac{\Psi_{1,k+t+1}}{\Psi_{1,1}})$ is of highest $\ell$-weight. Such a tensor product corresponds to 
 $$W_{k}(-\frac{1}{2}k+\frac{1}{2}) \otimes W_{k+t}(-\frac{1}{2}k-\frac{1}{2}t)$$
  in \cite[Corollary 2.7]{CP}, and is of highest $\ell$-weight by \cite[Proposition 4.2]{CP}.
\end{proof}
We have followed the general scheme of the proof of T-systems for quantum affine algebras \cite{H1,Nakajima}, which corresponds to the case $t = 0$. The differences lie at Steps 2 and 3, which are variants of Chari's reduction trick \cite{Chari}. 

 For $t > 0$, we note that the two tensor products $V$ and $V'$ appeared in the proof of \cite[Theorem 4.1]{FoH} with $m_1 = k+t+1$ and $m_2 = k-1$ (so that $m_1 > m_2 + 2$). Actually \cite{FoH} established that $V$ is an irreducible sub-quotient of $V'$ without mentioning the other irreducible sub-quotient. That $V'$ is a highest $\ell$-weight module can be deduced from more general results about tensor products of KR modules \cite{TG,T}.  
 \begin{cor} \label{cor: l-weight Demazure}
 Let $(i, k, x) \in I \times \BZ_{>0} \times \BC$. If $\Bd_{k,x}^{(i,1)} \Be$ is an $\ell$-weight of $D_{k,x}^{(i,1)}$, then either $\Be \in \{1, A_{i,x}^{-1}\}$ or $\Be \in A_{i',x-kd_i+z}^{-1} \lCQ_x^-$ for certain $(i',z) \in I \times \frac{1}{2}\BZ$ with
 \begin{align*}
\mathrm{either}\quad (i' = i,\ z = -d_i) \quad \mathrm{or}\quad (c_{ii'} < 0,\ -\frac{3}{2} \leq z \leq \frac{1}{2}).
 \end{align*}
\end{cor}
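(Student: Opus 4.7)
The plan is to combine the short exact sequence of Theorem \ref{thm: KR and Demazure} (specialized to $t = 1$) with the $\ell$-weight description of KR modules in Theorem \ref{thm: l-weights KR}. First, by the spectral parameter shift of Remark \ref{rem: spectral shift}, I reduce WLOG to $x = (k+1)d_i$, so that every position relevant to the argument lies in $\frac{1}{2}\BZ$. Theorem \ref{thm: KR and Demazure} then embeds $D := D_{k,(k+1)d_i}^{(i,1)}$ as a submodule of $V' := W_{k,0}^{(i)} \otimes W_{k+1,d_i}^{(i)}$, so every $\ell$-weight of $D$ is an $\ell$-weight of $V'$; by multiplicativity of $\qc$ (Theorem \ref{thm: q-char ring morphism}), the latter factor as $\Bf \Bg$ with $\Bf \in \lwt(W_{k,0}^{(i)})$ and $\Bg \in \lwt(W_{k+1,d_i}^{(i)})$.

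Writing $\Bf = \Bw_{k,0}^{(i)} \Bm_1$, $\Bg = \Bw_{k+1,d_i}^{(i)} \Bm_2$ and noting the identity $\Bw_{k,0}^{(i)} \Bw_{k+1,d_i}^{(i)} = \Bd \cdot Q$ with $\Bd := \Bd_{k,(k+1)d_i}^{(i,1)}$ and $Q := A_{i,d_i} A_{i,2d_i} \cdots A_{i,kd_i}$, every $\ell$-weight of $D$ is of the form $\Bd \Be$ with $\Be = Q \Bm_1 \Bm_2$. Corollary \ref{cor: layer structure of l-weights} forces $\Be \in \lCQ_-$, so $\Bm_1 \Bm_2$ must contain each $A_{i,jd_i}^{-1}$ at least once for $j = 1, \ldots, k$.

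The corollary then follows from a case analysis using Theorem \ref{thm: l-weights KR}. If $\Bm_1 = 1$ and $\Bm_2$ is a ``type (a)'' chain $A_{i,d_i}^{-1} \cdots A_{i,(l'+1)d_i}^{-1}$, the cancellation of $Q$ forces $l' \in \{k-1, k\}$, yielding $\Be \in \{1, A_{i,x}^{-1}\}$. If $\Bm_1 = 1$ and $\Bm_2 = A_{i,d_i}^{-1} A_{i',z'}^{-1} \Be_2$ is of ``type (b)'' with $i' \neq i$, $z' \in \mathfrak{Z}_{k+1,d_i}^{(ii')}$ and $\Be_2 \in \lCQ_{d_i}^- = \lCQ_x^-$, then $\Be_2$ must absorb $A_{i,2d_i}^{-1} \cdots A_{i,kd_i}^{-1}$, leaving $\Be \in A_{i',z'}^{-1} \lCQ_x^-$ with $z := z' - d_i \in \{-\frac{3}{2}, -1, -\frac{1}{2}, 0, \frac{1}{2}\}$ and $c_{ii'} < 0$ by Theorem \ref{thm: l-weights KR}. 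Finally, if $\Bm_1 \neq 1$, then $\Bm_1$ contains $A_{i,0}^{-1}$ (the initial factor in both the type (a) and type (b) descriptions of $\ell$-weights of $W_{k,0}^{(i)}$); since $Q$ has no $A_{i,0}$ factor, this $A_{i,0}^{-1}$ survives in $\Be$, which places $\Be$ in $A_{i,0}^{-1} \lCQ_x^- = A_{i,x-(k+1)d_i}^{-1} \lCQ_x^-$, i.e.\ the case $(i',z) = (i,-d_i)$.

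The main technical point is the type-(b) subcase: one must verify that $\Be_2 \in \lCQ_{d_i}^-$ can and must contain the $A_{i,jd_i}^{-1}$ factors demanded by the condition $\Be \in \lCQ_-$, and that the residue left after this cancellation, together with $A_{i',z'}^{-1}$, is of the form prescribed by the corollary.
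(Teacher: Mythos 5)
Your proposal is correct and follows essentially the same route as the paper: realize $D_{k,x}^{(i,1)}$ inside the tensor product $W_{k,\ast}^{(i)}\otimes W_{k+1,\ast}^{(i)}$ via Theorem \ref{thm: KR and Demazure}, factor each $\ell$-weight as a product of monomials from the two KR factors, and run the case analysis through Theorem \ref{thm: l-weights KR}, using $\Be\in\lCQ_-$ to force the cancellation of $Q=A_{i,d_i}\cdots A_{i,kd_i}$. The only differences from the paper's argument are the choice of normalization ($x=(k+1)d_i$ rather than $x=kd_i$) and the ordering of the cases, neither of which is substantive.
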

\begin{proof}
We have the pullback formula $\tau_a^*(D_{k,x}^{(i,t)}) = D_{k,x+a}^{(i,t)}$ by Remark \ref{rem: spectral shift}.
Assume without loss of generality $x = kd_i$. By Theorem \ref{thm: KR and Demazure}, $D_{k,kd_i}^{(i,1)}$ is an irreducible sub-quotient of $W_{k,-d_i}^{(i)} \otimes W_{k+1,0}^{(i)}$. It follows from Equation \eqref{def: Demazure} that
$$ A_{i, (k-1)d_i}^{-1} A_{i,(k-2)d_i}^{-1} \cdots A_{i,d_i}^{-1} A_{i,0}^{-1} \Be = \Be' \Be''$$ 
where $\Be'$ and $\Be''$ are monomials in the normalized q-characters of the first and the second KR modules respectively. Note that $\Be, \Be', \Be'' \in \lCQ_0^-$. Suppose $\Be \neq 1$.

 If $\Be' \neq 1$, then $\Be' \in A_{i,-d_i}^{-1} \lCQ_0^-$, which implies $\Be \in A_{i,-d_i}^{-1} \lCQ_0^-$. 
 
 Assume $\Be' = 1$. Then $\Be'' \in A_{i, (k-1)d_i}^{-1} A_{i,(k-2)d_i}^{-1} \cdots A_{i,d_i}^{-1} A_{i,0}^{-1} \lCQ_0^-$. Applying Theorem \ref{thm: l-weights KR} to the KR module $W_{k+1,0}^{(i)}$, we have: 
\begin{itemize}
\item either $\Be'' = A_{i, (k-1)d_i}^{-1} A_{i,(k-2)d_i}^{-1} \cdots A_{i,d_i}^{-1} A_{i,0}^{-1} A_{i,kd_i}^{-1}$, implying $\Be = A_{i,kd_i}^{-1}$;
\item or $\Be'' \in  A_{i',z}^{-1} \lCQ_0^-$ where $c_{ii'} < 0$ and $z \in \mathfrak{Z}_{k+1,0}^{(ii')} \subseteq \{-\frac{3}{2},\ -1,\ -\frac{1}{2},\ 0,\ \frac{1}{2} \}$, implying $\Be \in A_{i',z}^{-1} \lCQ_0^-$ for such a pair $(i', z)$.
\end{itemize} 
The proof of the corollary is completed.
 \end{proof}

Let us write $\Bd_{k,x}^{(i,t)}$ in terms of the $\Psi$ based on Equation \eqref{def: simple root}. Assume $k \in 6 \BZ_{>0}$ so that $k$ is divisible by all the $d_{j}$. We have
\begin{multline}  \label{equ: Demazure weight}
\quad \quad \quad \Bd_{k,x}^{(i,t)} = \frac{\Psi_{i,x+td_i}}{\Psi_{i,x}} \prod_{j: c_{ij} < 0} \frac{\Psi_{j,x+d_{ij}}}{\Psi_{j,x + d_{ij}-k d_i}}  \\
  \times  \prod_{j: c_{ij} = -2} \frac{\Psi_{j,x}}{\Psi_{j,x-k} }  \times \prod_{j: c_{ij} = -3} \frac{\Psi_{j,x+\frac{1}{2}} \Psi_{j,x-\frac{1}{2}}}{\Psi_{j,x+\frac{1}{2}-k} \Psi_{j,x-\frac{1}{2}-k}}.
\end{multline}
One may view $\Bd_{k,0}^{(i,t)}$ as the Yangian analog of $\tilde{\Psi}_i^{(t,k)}$ in \cite[\S 4.3]{FH2}.
If $\Glie$ is simply-laced, then the second line disappears and Corollary \ref{cor: l-weight prime Demazure} will follow directly.

\section{Asymptotic representations} \label{sec: asym}
We construct infinite-dimensional asymptotic modules in category $\BGG$ as limits of KR modules and provide three-term identities in the Grothendieck ring. 

\subsection{Asymptotic construction.} \label{sub: idea}
  Let us recall the asymptotic construction from \cite[Section 2]{Z2}. Suppose an algebra $A$ (unital and associative) is given. Let $S$ be a set of algebraic generators for $A$.

For all positive integer $k \in \BZ_{>0}$ let a representation $\rho_k: A \longrightarrow \mathrm{End}_{\BC}(V_k)$ of $A$ be given. Let $(F_{k,l}: V_l \longrightarrow V_k)_{l<k}$ be an inductive system of vector spaces; namely $F_{k,l} F_{l,m} = F_{k,m}$ for $m<l<k$ as linear maps $V_m \longrightarrow V_k$. 

\noindent {\bf Assumptions.} We suppose that the structural maps $F_{k,l}$ are {\it injective}. Next, we assume the $(L,K)$-{\it asymptotic property} for fixed $L, K \in \BZ_{>0}$: 
\begin{enumerate}
\item[(AP)] for $(s, l) \in S \times \BZ_{>0}$, there exists a $\mathrm{Hom}_{\BC}(V_l,V_{l+L})$-valued polynomial $P_{s;l}(u)$ in $u$ of degree bounded by $K$ such that
$$ s F_{k,l} = F_{k,l+L} P_{s;l}(u)|_{u = k} \in \mathrm{Hom}_{\BC}(V_l,V_k) \quad \mathrm{for}\ k > l+L. $$
\end{enumerate}
Here to be precise one should write $\rho_k(s) F_{k,l}$ instead of $s F_{k,l}$. We omit $\rho_k$ because it is, and will be, clear from the context to which representation we are applying $s$.
As in \cite{Z2}, the polynomial $P_{s;l}(u)$ is unique. For fixed $s \in S$, the coefficients of the polynomials $(P_{s;l}(u))_{l>0}$ form morphisms of inductive systems: 
$$F_{l+L,m+L} P_{s;m}(u) = P_{s;l}(u) F_{l,m} \in \mathrm{Hom}_{\BC}(V_m, V_{l+L})[u] \quad \mathrm{for}\ m < l. $$ 
Their inductive limits are encoded in an $\mathrm{End}_{\BC} (V_{\infty})$-valued polynomial $P_s(u)$ of degree bounded by $K$. Here $V_{\infty}$ denotes the inductive limit of $(V_l, F_{k,l})$.

\noindent {\bf Claim.} Let $c \in \BC$. Then $s \mapsto P_s(u)|_{u=c}$ defines a representation of $A$ on $V_{\infty}$.

Later in Section \ref{sec: prefund}, we shall be in the situation where $P_{s;l}(u)$ are polynomials in $u^{-1}$ of bounded degree. In this case, one can take $c = \infty$ to obtain a representation. 

Our construction is inspired by the work of Hernandez--Jimbo \cite{HJ} on representations of an upper Borel subalgebra of the quantum affine algebra $U_q(\widehat{\Glie})$. For the Borel subalgebra the asymptotic property is described by polynomials in $q^{-k}$, and Hernandez--Jimbo took the specialization of $q^{-k}$ at zero to define the negative pre-fundamental modules. For the full quantum affine algebra the asymptotic property involves Laurent polynomials in $q^{k}$, and one has specializations of $q^{k}$ at non-zero complex numbers, resulting in asymptotic modules \cite{Z2}.

\subsection{Inductive system of KR modules.}
From now on up to Definition \ref{defi: asymptotic module}, $i \in I$ is fixed, and when writing $k > l$ we implicitly assume that $k,l$ are non-negative integers. We shall produce an inductive system of vector spaces $W_{0,0}^{(i)} \subset W_{1,0}^{(i)} \subset W_{2,0}^{(i)} \subset \cdots $, and then establish asymptotic property for this system. 

For $k > l$ let $Z_{lk}$ be the KR module $W_{k-l,ld_i}^{(i)}$ with a fixed highest $\ell$-weight vector $\omega_{lk}$. (This differs from the notation $Z_{kl}$ in \cite[\S 4]{Z4} due to the opposite tensor products taken in Theorem \ref{thm: KR and Demazure} and in  \cite[Lemma 3.6]{Z4}.) In the case $l = 0$ we have $Z_{0k} = W_{k,0}^{(i)}$ and write simply $\omega_k := \omega_{0k}$.

Setting $k=1$ in Theorem \ref{thm: KR and Demazure}, we get surjective morphisms of $\Yang$-modules $W_{1,0}^{(i)} \otimes W_{t+1,d_i}^{(i)} \longrightarrow W_{t+2,d_i}^{(i)}$ for $t \geq 0$ relating different KR modules. The proof of \cite[Lemma 4.1]{Z4} then goes without change. For $m < l < k$ there exists a unique $\Yang$-module morphism 
$ Z_{ml} \otimes Z_{lk} \longrightarrow Z_{mk} $
which sends $\omega_{ml} \otimes \omega_{lk}$ to $\omega_{mk}$. As two special cases, for $l<k$ and $l < t-1$ we have 
$$ \SF_{k,l}: W_{l,0}^{(i)} \otimes Z_{lk} \longrightarrow W_{k,0}^{(i)},\quad \SG_{t,l}: Z_{l,l+1} \otimes Z_{l+1,t} \longrightarrow Z_{lt}. $$
As in \cite[\S 4.2]{HJ}, we identify $W_{l,0}^{(i)}$ with the vector subspace $W_{l,0}^{(i)} \otimes \omega_{lk}$ of $W_{l,0}^{(i)} \otimes Z_{lk}$, and then we restrict $\SF_{k,l}$ to this subspace to get the linear map
$$ F_{k,l}: W_{l,0}^{(i)} \longrightarrow W_{k,0}^{(i)},\quad  v \mapsto \SF_{k,l}(v \otimes \omega_{lk}). $$
Then $F_{k,l}(\omega_l) = \omega_k$, and $(W_{l,0}^{(i)}, F_{k,l})$ forms an inductive system of vector spaces: $F_{k,l} F_{l,m} = F_{k,m}$ for $m < l < k$. If $l < k-1$ then for $w \otimes v \in W_{l,0}^{(i)} \otimes Z_{l,l+1}$ we have:
\begin{gather}
 \SF_{k,l}(w \otimes \SG_{k,l}(v \otimes \omega_{l+1,k})) =  F_{k,l+1} \SF_{l+1,l}(w \otimes v).  \label{rel: F G}
\end{gather}
(There is a typo at the right-hand side of \cite[Eq.(4.19)]{Z4}: $F_{k,l}$ should be $F_{k,l+1}$.)
\begin{lem}  \label{lem: compt with x+}
For $l < k$ the map $F_{k,l}$ is an injective morphism of $Y_{\hbar}^+(\Glie)$-modules. 
\end{lem}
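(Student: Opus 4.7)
The plan is to handle the two claims separately. For $Y_{\hbar}^+(\Glie)$-equivariance, I expand $\Delta(x_j^+(u))(v \otimes \omega_{lk})$ using Lemma \ref{lem: coproduct estimation}. The middle summand $(\xi_j(u) \otimes x_j^+(u))(v \otimes \omega_{lk})$ vanishes because $\omega_{lk}$ is a highest $\ell$-weight vector in $Z_{lk}$, so $x_j^+(u)\omega_{lk} = 0$. The remaining ``extra'' summation lies in $\sum_{h(\beta)\geq 2} \Yang_{\alpha_j-\beta} \otimes \Yang_{\beta}[[u^{-1}]]$, and Remark \ref{rem: q mult} guarantees that it annihilates $W_{l,0}^{(i)} \otimes \omega_{lk}$. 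What survives is $(x_j^+(u) v) \otimes \omega_{lk}$; applying the Yangian morphism $\SF_{k,l}$ to both sides gives $x_j^+(u) F_{k,l}(v) = F_{k,l}(x_j^+(u)v)$, so $F_{k,l}$ commutes with every $x_{j,n}^+$.

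For injectivity I first note that $F_{k,l}$ shifts weights by a constant. Indeed, $\omega_{lk}$ has weight $\varpi(\Bw_{k-l,ld_i}^{(i)}) = (k-l)\varpi_i$, and $\SF_{k,l}$ preserves weights, so $F_{k,l}$ maps $(W_{l,0}^{(i)})_\mu$ into $(W_{k,0}^{(i)})_{\mu + (k-l)\varpi_i}$. Consequently the kernel $K := \ker F_{k,l}$ is $\Hlie$-graded. Assuming $K \neq 0$, pick a nonzero $v \in K$ of maximal weight $\mu$ (such a maximum exists because $W_{l,0}^{(i)}$ is finite-dimensional). Since $K$ is $Y_{\hbar}^+(\Glie)$-invariant, each $x_{j,n}^+ v$ lies in $K$ with weight $\mu + \alpha_j$, forcing $x_{j,n}^+ v = 0$ by maximality. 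Hence $v$ is annihilated by the whole of $Y_{\hbar}^+(\Glie)$.

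Finally I invoke the simplicity of the finite-dimensional module $W_{l,0}^{(i)}$. By the triangular decomposition $\Yang = Y_{\hbar}^-(\Glie) Y_{\hbar}^0(\Glie) Y_{\hbar}^+(\Glie)$ one has $\Yang v = Y_{\hbar}^-(\Glie) Y_{\hbar}^0(\Glie) v$, a submodule whose weights are bounded above by $\mu$. Simplicity forces $\Yang v = W_{l,0}^{(i)}$, so $\mu$ must be the highest weight $l\varpi_i$ and $v \in \BC \omega_l$. But $F_{k,l}(\omega_l) = \omega_k \neq 0$, contradicting $v \in K$. Therefore $K = 0$ and $F_{k,l}$ is injective.

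I do not foresee any real obstacle beyond keeping track of the weight shift and verifying that the extra coproduct summations of Lemma \ref{lem: coproduct estimation} genuinely vanish on $W_{l,0}^{(i)} \otimes \omega_{lk}$, which is precisely the content of Remark \ref{rem: q mult}.
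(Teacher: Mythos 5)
Your equivariance argument is exactly the paper's: by Remark \ref{rem: q mult} the highest $\ell$-weight vector $\omega_{lk}$ kills both the $\xi_j(u)\otimes x_j^+(u)$ term and the higher-order tail of Lemma \ref{lem: coproduct estimation} on $W_{l,0}^{(i)}\otimes\omega_{lk}$, and the $\Yang$-linearity of $\SF_{k,l}$ then gives $x_j^+(u)F_{k,l}=F_{k,l}x_j^+(u)$. For injectivity the paper does not give an argument at all, deferring to [Z2, Proposition 4.3(2)] and [Z4, Lemma 4.2]; your self-contained proof --- the kernel is a weight-graded $Y_{\hbar}^+(\Glie)$-stable subspace (graded because $F_{k,l}$ shifts weights uniformly by $(k-l)\varpi_i$), a vector of maximal weight in it is annihilated by all $x_{j,n}^+$, hence by triangular decomposition and simplicity of $W_{l,0}^{(i)}$ generates the whole module and must lie in the one-dimensional top weight space $\BC\omega_l$, contradicting $F_{k,l}(\omega_l)=\omega_k\neq 0$ --- is correct and is essentially the standard argument those references carry out. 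No gaps.
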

\begin{proof}
Since $\omega_{lk}$ is a highest $\ell$-weight vector, by Remark \ref{rem: q mult}, in $W_{l,0}^{(i)} \otimes Z_{lk}$:
$$ x_{j}^+(u) (w \otimes \omega_{lk}) = x_{j}^+(u) w \otimes \omega_{lk} \quad \mathrm{for}\ w \in W_{l,0}^{(i)}\ \mathrm{and}\ j \in I. $$
It follows from the $\Yang$-linearity of $\SF_{k,l}$ that 
\begin{align*}
x_{j}^+(u) F_{k,l} (w) &= x_{j}^+(u) \SF_{k,l}(w \otimes \omega_{lk}) = \SF_{k,l}x_j^+(u) (w \otimes \omega_{lk}) \\
& = \SF_{k,l}( x_{j}^+(u)w\otimes \omega_{lk}) = F_{k,l} x_j^+(u) w.
\end{align*}
So $x_j^+(u) F_{k,l} = F_{k,l} x_j^+(u)$ for all $j \in I$, meaning that $F_{k,l}$ is $Y_{\hbar}^+(\Glie)$-linear.  The injectivity is proved similarly as \cite[Proposition 4.3 (2)]{Z2}, with the RLL generators therein replaced by the Drinfeld generators; see also \cite[Lemma 4.2]{Z4}.
\end{proof}

Remark \ref{rem: q mult} also gives the following commutation relations for $l < k$ and $j \in I$:
\begin{align} 
&\xi_j(u) F_{k,l} = \left(\frac{u+kd_i \hbar }{u+ld_i \hbar}\right)^{\delta_{ij}} F_{k,l} \xi_j(u), \label{rel: compt with xi j} \\
& x_j^-(u) F_{k,l} = F_{k,l} x_j^-(u) \quad \mathrm{if}\ j \neq i.  \label{rel: comp with x j}
\end{align}
For the last equation we used $x_j^-(u) \omega_{lk} = 0$ for $j \neq i$, based on Lemma \ref{lem: denominator weight space}. 

\begin{lem} \label{lem: compt x i}
For $l < k-1$ we have
\begin{equation} \label{rel: compt with x i}
x_i^-(u) F_{k,l}(w) = F_{k,l+1} ( A_i^l(u) + B_i^l(u) kd_i ),
\end{equation}
where $A_i^l(u)$ and $B_i^l(u)$ are $\mathrm{Hom}_{\BC}(W_{l,0}^{(i)}, W_{l+1,0}^{(i)})$-valued power series in $u^{-1}$ independent of $k$ and defined by the following formulas. For $w \in W_{l,0}^{(i)}$,
\begin{align*}
A_i^l(u): &\quad w \mapsto \frac{u}{u+ld_i\hbar} F_{l+1,l} x_i^-(u) w - \frac{l\hbar}{u+ld_i\hbar}\SF_{l+1,l}(w \otimes  x_{i,0}^- \omega_{l,l+1}), \\
B_i^l(u): &\quad w \mapsto \frac{\hbar}{u+ld_i\hbar} F_{l+1,l} x_i^-(u) w + \frac{\hbar}{(u+ld_i\hbar)d_i} \SF_{l+1,l}(w \otimes  x_{i,0}^- \omega_{l,l+1}).
\end{align*} 
\end{lem}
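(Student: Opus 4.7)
The plan is to push $x_i^-(u)$ through the $\Yang$-linear map $\SF_{k,l}$. Starting from $F_{k,l}(w)=\SF_{k,l}(w\otimes\omega_{lk})$ and using $\Yang$-linearity, the task reduces to understanding $\Delta(x_i^-(u))(w\otimes\omega_{lk})$. Since $\omega_{lk}$ is a highest $\ell$-weight vector of $Z_{lk}=W_{k-l,ld_i}^{(i)}$, Remark \ref{rem: q mult} kills the ``higher weight'' tail of Lemma \ref{lem: coproduct estimation}, leaving exactly two surviving summands:
$$x_i^-(u)F_{k,l}(w)=\SF_{k,l}(x_i^-(u)w\otimes\xi_i(u)\omega_{lk})+\SF_{k,l}(w\otimes x_i^-(u)\omega_{lk}).$$
The highest $\ell$-weight of $Z_{lk}$ is $\Psi_{i,kd_i}/\Psi_{i,ld_i}$, hence $\xi_i(u)\omega_{lk}=\tfrac{u+kd_i\hbar}{u+ld_i\hbar}\omega_{lk}$; and Lemma \ref{lem: denominator weight space}(b) applied with $s=1,\,c_1=ld_i\hbar$ gives
$$x_i^-(u)\omega_{lk}=\tfrac{\hbar}{u+ld_i\hbar}\,x_{i,0}^-\omega_{lk}.$$

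To convert the output into an expression involving $F_{k,l+1}$ via Equation \eqref{rel: F G}, I would split $x_{i,0}^-\omega_{lk}$ through the morphism $\SG_{k,l}\colon Z_{l,l+1}\otimes Z_{l+1,k}\to Z_{lk}$. From $\omega_{lk}=\SG_{k,l}(\omega_{l,l+1}\otimes\omega_{l+1,k})$ and $\Delta(x_{i,0}^-)=1\otimes x_{i,0}^-+x_{i,0}^-\otimes 1$ one obtains
$$x_{i,0}^-\omega_{lk}=\SG_{k,l}(x_{i,0}^-\omega_{l,l+1}\otimes\omega_{l+1,k})+\SG_{k,l}(\omega_{l,l+1}\otimes x_{i,0}^-\omega_{l+1,k}).$$
Equation \eqref{rel: F G} only accepts the first summand (the one with $\omega_{l+1,k}$ in the right slot), so the main obstacle is to eliminate the second. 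The key trick is to compute $x_i^-(u)\omega_{lk}$ a \emph{second} way using $\Yang$-linearity of $\SG_{k,l}$ applied to $\omega_{l,l+1}\otimes\omega_{l+1,k}$, where Lemma \ref{lem: denominator weight space} provides the analogs $x_i^-(u)\omega_{l,l+1}=\tfrac{\hbar}{u+ld_i\hbar}x_{i,0}^-\omega_{l,l+1}$ and $x_i^-(u)\omega_{l+1,k}=\tfrac{\hbar}{u+(l+1)d_i\hbar}x_{i,0}^-\omega_{l+1,k}$. Equating the two expressions for $x_i^-(u)\omega_{lk}$, clearing denominators and comparing coefficients of $u$ produces exactly
$$\SG_{k,l}(\omega_{l,l+1}\otimes x_{i,0}^-\omega_{l+1,k})=(k-l-1)\,\SG_{k,l}(x_{i,0}^-\omega_{l,l+1}\otimes\omega_{l+1,k}),$$
so that $x_{i,0}^-\omega_{lk}=(k-l)\,\SG_{k,l}(x_{i,0}^-\omega_{l,l+1}\otimes\omega_{l+1,k})$.

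With this proportionality at hand, Equation \eqref{rel: F G} applied to $v=x_{i,0}^-\omega_{l,l+1}\in Z_{l,l+1}$ yields
$$\SF_{k,l}(w\otimes x_i^-(u)\omega_{lk})=\tfrac{(k-l)\hbar}{u+ld_i\hbar}\,F_{k,l+1}(\SF_{l+1,l}(w\otimes x_{i,0}^-\omega_{l,l+1})),$$
while the other summand is $\tfrac{u+kd_i\hbar}{u+ld_i\hbar}F_{k,l+1}(F_{l+1,l}(x_i^-(u)w))$ after using $F_{k,l}=F_{k,l+1}F_{l+1,l}$. Splitting the rational factors as
$$\tfrac{u+kd_i\hbar}{u+ld_i\hbar}=\tfrac{u}{u+ld_i\hbar}+\tfrac{\hbar}{u+ld_i\hbar}\cdot kd_i,\qquad \tfrac{(k-l)\hbar}{u+ld_i\hbar}=-\tfrac{l\hbar}{u+ld_i\hbar}+\tfrac{\hbar}{(u+ld_i\hbar)d_i}\cdot kd_i$$
reproduces precisely $F_{k,l+1}((A_i^l(u)+B_i^l(u)kd_i)(w))$ with the stated, manifestly $k$-independent $A_i^l(u)$ and $B_i^l(u)$. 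The essential content is the proportionality constant $k-l-1$: it is what turns the overall $k$-dependence into an affine function of $kd_i$ with coefficients independent of $k$, which is the form required for the asymptotic construction of Section \ref{sub: idea}.
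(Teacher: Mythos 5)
Your proposal is correct, and its overall skeleton coincides with the paper's: split $\Delta(x_i^-(u))(w\otimes\omega_{lk})$ via Lemma \ref{lem: coproduct estimation} and Remark \ref{rem: q mult}, evaluate $\xi_i(u)\omega_{lk}$ and $x_i^-(u)\omega_{lk}$ by Lemma \ref{lem: denominator weight space}(b), feed the result through Equation \eqref{rel: F G}, and split the rational prefactors into an affine function of $kd_i$. The one genuinely different step is how you obtain the proportionality $\SG_{k,l}(\omega_{l,l+1}\otimes x_{i,0}^-\omega_{l+1,k})=(k-l-1)\,\SG_{k,l}(x_{i,0}^-\omega_{l,l+1}\otimes\omega_{l+1,k})$. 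The paper exhibits the vector $v=\omega_{l,l+1}\otimes x_{i,0}^-\omega_{l+1,k}-(k-l-1)x_{i,0}^-\omega_{l,l+1}\otimes\omega_{l+1,k}$, checks $x_{i,0}^+v=0$, and then uses that $\SG_{k,l}(v)$ lies in the weight space $(Z_{lk})_{(k-l)\varpi_i-\alpha_i}$, which by Theorem \ref{thm: l-weights KR} is one-dimensional and spanned by $x_{i,0}^-\omega_{lk}$, on which $x_{i,0}^+$ acts nontrivially; hence $\SG_{k,l}(v)=0$. You instead compute $x_i^-(u)\omega_{lk}$ twice — directly from Lemma \ref{lem: denominator weight space}(b) for $Z_{lk}$, and through the coproduct applied to $\omega_{l,l+1}\otimes\omega_{l+1,k}$ — and extract the relation by clearing denominators; the coefficient of $u^0$ gives $d_i\hbar\,P=(k-l-1)d_i\hbar\,Q$ in your notation. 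Both arguments are valid. Yours is slightly more self-contained, as it avoids invoking the multiplicity-one statement for the weight space $(Z_{lk})_{(k-l)\varpi_i-\alpha_i}$ and only uses module morphisms and the coproduct estimate; the paper's is more robust in that it identifies an explicit kernel vector of $\SG_{k,l}$, a fact reused in spirit elsewhere. All remaining computations in your write-up (the scalar functions, the use of \eqref{rel: F G} with $v=x_{i,0}^-\omega_{l,l+1}$, and the final partial-fraction splitting) match the paper exactly.
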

\begin{proof}
First we prove the following vector is in the kernel of $\SG_{k,l}$:
 $$v := \omega_{l,l+1} \otimes x_{i,0}^- \omega_{l+1,k} - (k-l-1) x_{i,0}^- \omega_{l,l+1} \otimes \omega_{l+1,k} \in Z_{l,l+1} \otimes Z_{l+1,k}. $$
  One checks that $x_{i,0}^+ v = 0$ based on the identity 
$$x_{i,0}^+ x_{i,0}^- \omega_{lk} = [x_{i,0}^+,x_{i,0}^-] \omega_{lk} = \xi_{i,0} \omega_{lk} = (k-l)d_i \omega_{lk}. $$
Note that $\SG_{k,l}(v)$ is in the weight space $(Z_{lk})_{(k-l)\varpi_i-\alpha_i}$, which by Theorem \ref{thm: l-weights KR}  is spanned by $x_{i,0}^- \omega_{lk}$. Now $x_{i,0}^+ x_{i,0}^- \omega_{lk} \neq 0$ and $x_{i,0}^+ \SG_{k,l}(v) = 0$ force $\SG_{k,l} (v) = 0$.

Applying Lemma \ref{lem: denominator weight space} (b) to $Z_{lk}$ we get $(u+ld_i\hbar)x_i^-(u) \omega_{lk} = \hbar x_{i,0}^- \omega_{lk}$ and
\begin{align*}
x_{i,0}^- \omega_{lk} &= x_{i,0}^- \SG_{k,l}(\omega_{l,l+1} \otimes \omega_{l+1,k}) = \SG_{k,l} x_{i,0}^-(\omega_{l,l+1} \otimes \omega_{l+1,k}) \\
&=  \SG_{k,l}(\omega_{l,l+1} \otimes x_{i,0}^-\omega_{l+1,k} +  x_{i,0}^- \omega_{l,l+1} \otimes \omega_{l+1,k} - v) \\
&=  (k-l) \SG_{k,l}(x_{i,0}^- \omega_{l,l+1} \otimes \omega_{l+1,k}), \\
x_i^-(u) \omega_{lk} &= \frac{(k-l)\hbar}{u+ld_i\hbar} \SG_{k,l}(x_{i,0}^- \omega_{l,l+1} \otimes \omega_{l+1,k}).
\end{align*}

We compute $x_i^-(u) F_{k,l}(w)$ for $w \in W_{l,0}^{(i)}$, as in the proof of Lemma \ref{lem: compt with x+}:
\begin{align*}
x_i^-(u) F_{k,l}(w) &=  \SF_{k,l} x_i^-(u) (w \otimes \omega_{lk}) = \SF_{k,l}(w \otimes x_i^-(u) \omega_{lk} +  x_i^-(u) w \otimes \xi_i(u) \omega_{lk}) \\
&= \frac{(k-l)\hbar}{u+ld_i\hbar} \SF_{k,l}(w \otimes  \SG_{k,l}(x_{i,0}^- \omega_{l,l+1} \otimes \omega_{l+1,k}) ) +  \frac{u+kd_i\hbar}{u+ld_i\hbar} F_{k,l} x_i^-(u) w \\
&= F_{k,l+1}\left( \frac{(k-l)\hbar}{u+ld_i\hbar}  \SF_{l+1,l}(w \otimes  x_{i,0}^- \omega_{l,l+1}) + \frac{u+kd_i\hbar}{u+ld_i\hbar} F_{l+1,l} x_i^-(u) w\right).
\end{align*}
The last row is Equation \eqref{rel: F G} applied to $w \otimes x_{i,0}^- \omega_{l,l+1} \in W_{l,0}^{(i)} \otimes Z_{l,l+1}$.   Equation \eqref{rel: compt with x i} is a reformulation of it.
\end{proof}

 Let $\mathcal{S} := \{x_{j,n}^-,\ \xi_{j,n},\ x_{j,n}^+\ |\ (j, n) \in I \times \BZ_{\geq 0}  \}$ denote the set of Drinfeld generators of $\Yang$. 
 As a summary of Lemma \ref{lem: compt with x+} and Equations \eqref{rel: compt with xi j}--\eqref{rel: compt with x i}, the inductive system $W_{0,0}^{(i)} \subset W_{1,0}^{(i)} \subset W_{2,0}^{(i)} \subset \cdots $ satisfies the $(1,1)$-asymptotic property: to $s \in \mathcal{S}$ and $l \in \BZ_{\geq 0}$ are attached linear maps $A_s^l$ and $B_s^l$ from $W_{l,0}^{(i)}$ to $W_{l+1,0}^{(i)}$ such that 
\begin{equation*}  
s F_{k,l} = F_{k,l+1} (A_s^l + B_s^l \times k d_i) \quad \mathrm{for}\ k > l+1.
\end{equation*}
The $A_s^l$ and $B_s^l$ are unique and form morphisms of inductive systems,
so they admit inductive limits, denoted by $A_s$ and $B_s$ respectively, which are linear operators on the inductive limit $W_{\infty}^{(i)} := \lim\limits_{\rightarrow} W_{l,0}^{(i)}$. Now let us apply Subsection \ref{sub: idea}.

\begin{defi}  \label{defi: asymptotic module} 
For $y \in \BC$ the assignment $s \mapsto A_s + y B_s$ as $s \in \mathcal{S}$ defines a $\Yang$-module structure on $W_{\infty}^{(i)}$, denoted by $\SL(\frac{\Psi_{i,y}}{\Psi_{i,0}})$. More generally, for $x \in \BC$ let $\SL(\frac{\Psi_{i,y}}{\Psi_{i,x}})$ be the pullback of $\SL(\frac{\Psi_{i,y-x}}{\Psi_{i,0}})$ by the spectral parameter shift $\tau_x$ from Equation \eqref{def: spectral shift}, and call it an {\it asymptotic module}. (Even if $y = x$ we still use $\SL(\frac{\Psi_{i,x}}{\Psi_{i,x}})$ to emphasize the dependence on $i$ and $x$.)
\end{defi}

This definition was made first in \cite{MS} for $\Glie = \mathfrak{sl}_2$. 

\begin{theorem}  \label{thm: asymptotic module}
The asymptotic modules are in category $\BGG$ with q-characters 
\begin{equation}  \label{equ: q-char asym}
\qc(\SL(\frac{\Psi_{i,y}}{\Psi_{i,x}})) = \frac{\Psi_{i,y}}{\Psi_{i,x}} \times \lim_{l \rightarrow \infty} \nqc(W_{l,x}^{(i)}) \quad \mathrm{for}\ (i, x, y) \in I \times \BC^2.
\end{equation}
Furthermore, the following statements hold in category $\BGG$.
\begin{itemize}
\item[(a)] The set $\CR'$ of highest $\ell$-weights equals $\CR$. 
\item[(b)] Let $V$ be a finite-dimensional module. In the q-character formula of $V$, replace $\qc(V)$ by $[V]$, write each of its $\ell$-weight as a product of the $\frac{\Psi_{i,y}}{\Psi_{i,x}}$ and replace the $\frac{\Psi_{i,y}}{\Psi_{i,x}}$ by the ratios $\frac{[\SL(\frac{\Psi_{i,y}}{\Psi_{i,0}})]}{[\SL(\frac{\Psi_{i,x}}{\Psi_{i,0}})]}$. Then we get a relation in the fractional field of the Grothendieck ring $K_0(\BGG)$.
\item[(c)] For $(i, a, b, x, y) \in I \times \BC^4$ we have in the Grothendieck ring
\begin{equation}  \label{equ: two-terms}
[\SL(\frac{\Psi_{i,b}}{\Psi_{i,a}})] [\SL(\frac{\Psi_{i,y}}{\Psi_{i,x}})] = [\SL(\frac{\Psi_{i,y}}{\Psi_{i,a}})] [\SL(\frac{\Psi_{i,b}}{\Psi_{i,x}})].
\end{equation}
\end{itemize}
\end{theorem}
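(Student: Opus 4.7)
\textbf{Plan: }First establish the q-character formula \eqref{equ: q-char asym}, then deduce (a)--(c) via the injectivity of $\qc$ from Theorem \ref{thm: q-char ring morphism}. By the spectral shift (Remark \ref{rem: spectral shift}), it suffices to treat $x = 0$ throughout.

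For the q-character formula, I would analyze the Cartan action $\xi_j(u)$ on the inductive limit $W_\infty^{(i)}$. Equation \eqref{rel: compt with xi j} says that $F_{l+1,l}$ commutes with $\xi_j(u)$ for $j\neq i$ and satisfies $\xi_i^{l+1}(u)F_{l+1,l} = \frac{u+(l+1)d_i\hbar}{u+ld_i\hbar}F_{l+1,l}\xi_i^l(u)$. The extra scalar is precisely $\Bw_{l+1,0}^{(i)}/\Bw_{l,0}^{(i)}$ at position $i$, so $F_{l+1,l}$ sends $(W_{l,0}^{(i)})_{\Bw_{l,0}^{(i)}\tilde{\Be}}$ injectively into $(W_{l+1,0}^{(i)})_{\Bw_{l+1,0}^{(i)}\tilde{\Be}}$, preserving the normalized $\ell$-weight $\tilde{\Be}$. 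The asymptotic substitution $kd_i\mapsto y$ in \eqref{rel: compt with xi j} replaces $\Bw_{l,0}^{(i)}$ by $\Psi_{i,y}/\Psi_{i,0}$ in the limit, so the class of such an $\ell$-weight vector in $\SL(\Psi_{i,y}/\Psi_{i,0})$ has $\ell$-weight $(\Psi_{i,y}/\Psi_{i,0})\tilde{\Be}$. Stabilization of $\nqc(W_{l,0}^{(i)})$ in $\CEl$ (Theorem \ref{thm: l-weights KR}) ensures multiplicities stabilize, giving \eqref{equ: q-char asym}; Corollary \ref{cor: layer structure of l-weights} together with right-negativity (Example \ref{example: simple roots right-negative}) yield conditions (O1) and (O2).

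For part (a), the formula displays $\Psi_{i,y}/\Psi_{i,x}$ as the highest $\ell$-weight of $\SL(\Psi_{i,y}/\Psi_{i,x})$; cyclicity is inherited from the KR modules $W_{l,x}^{(i)}$ via $F_{k,l}(\omega_l)=\omega_k$, so $\Psi_{i,y}/\Psi_{i,x}\in\CR'$. Tensor products of asymptotic modules then realize every element of $\CR$ as a highest $\ell$-weight, while $\CR'\subseteq\CR$ is Lemma \ref{lem: denominator weight space}(a). For part (c), by \eqref{equ: q-char asym} both sides of \eqref{equ: two-terms} have q-character
\[ \frac{\Psi_{i,y}\Psi_{i,b}}{\Psi_{i,a}\Psi_{i,x}}\cdot\lim_{l\to\infty}\nqc(W_{l,a}^{(i)})\cdot\lim_{l\to\infty}\nqc(W_{l,x}^{(i)}), \]
using the identity $(\Psi_{i,b}/\Psi_{i,a})(\Psi_{i,y}/\Psi_{i,x})=(\Psi_{i,y}/\Psi_{i,a})(\Psi_{i,b}/\Psi_{i,x})$; injectivity of $\qc$ then gives equality in $K_0(\BGG)$. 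For part (b), in the fractional field of $K_0(\BGG)$ the ratio $[\SL(\Psi_{i,y}/\Psi_{i,0})]/[\SL(\Psi_{i,x}/\Psi_{i,0})]$ has q-character $\Psi_{i,y}/\Psi_{i,x}$ (the limit factors cancel), so the stated substitution into the q-character identity for a finite-dimensional $V$ produces a valid identity by injectivity.

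The main obstacle is establishing the q-character formula cleanly: one must verify that every $\ell$-weight vector of the infinite-dimensional $W_\infty^{(i)}$ arises from one at some finite level with the correctly shifted normalized $\ell$-weight, and that the dimensions of its $\ell$-weight spaces equal the stabilized multiplicities of $\nqc(W_{l,0}^{(i)})$. The bookkeeping sketched above handles this, resting essentially on the stabilization from Theorem \ref{thm: l-weights KR} and on the transport of $\ell$-weight space decompositions across the injective intertwiners $F_{l+1,l}$.
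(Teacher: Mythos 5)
Your proposal is correct and follows essentially the same route as the paper: establish \eqref{equ: q-char asym} by observing that the structural maps into the limit intertwine $\xi_j(u)$ up to the scalar $\bigl(\frac{u+y\hbar}{u+ld_i\hbar}\bigr)^{\delta_{ij}}$ obtained from \eqref{rel: compt with xi j} under the substitution $kd_i\mapsto y$, then deduce (a)--(c) by q-character comparison and the injectivity of $\qc$ (the paper delegates this last step to the arguments of Hernandez--Jimbo and Frenkel--Hernandez, which is exactly what you reproduce).
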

\begin{proof}
It suffices to prove Equation \eqref{equ: q-char asym}: its right-hand side converges according to Theorem \ref{thm: l-weights KR} and implies the cone conditions of weights; parts (a)--(c) are then proved in the same way as \cite[Theorem 3.11]{HJ} and \cite[Theorem 4.8]{FH}, by replacing the isomorphism class $[V]$ with q-character $\qc(V)$. Since the q-character map is compatible with the spectral parameter shifts, one may assume $x = 0$.

  Consider the module $\SL(\frac{\Psi_{i,y}}{\Psi_{i,0}})$ in Definition \ref{defi: asymptotic module} with the injective structural maps $F_l: W_{l,0}^{(i)} \longrightarrow \SL(\frac{\Psi_{i,y}}{\Psi_{i,0}})$ for $l \geq 0$. Since the pre-factor at the right-hand side of Equation \eqref{rel: compt with xi j} is a polynomial of $k d_i$, by definition of asymptotic representation, when taking inductive limit $k \rightarrow \infty$ in the equation one replaces $kd_i$ by $y$: 
 $$ \xi_j(u) F_l = (\frac{u+y\hbar}{u+ld_i\hbar})^{\delta_{ij}} F_l \xi_j(u). $$  
So $F_l$ sends an $\ell$-weight vector to another $\ell$-weight vector with $\ell$-weight multiplied by $\frac{\Psi_{i,y}}{\Psi_{i,ld_i}}$. This proves the limit formula \eqref{equ: q-char asym}.
\end{proof}

By Equation \eqref{equ: q-char asym},  it makes sense to talk about the normalized q-characters of tensor products of asymptotic modules and irreducible modules. The normalized q-character of $\SL(\frac{\Psi_{i,y}}{\Psi_{i,x}})$ is independent of $y$. Also, any $\ell$-weight of this module different from $\frac{\Psi_{i,y}}{\Psi_{i,x}}$ must belong to $\frac{\Psi_{i,y}}{\Psi_{i,x}} A_{i,x}^{-1} \lCQ_x^-$.

\subsection{Three-term identities.}
For $(i, k, x) \in I \times \BC^2$, let us take the first line of Equation \eqref{equ: Demazure weight} with $t = 1$ to define 
\begin{align}  
\Bm_{k,x}^{(i)} :=  \frac{\Psi_{i,x+d_i}}{\Psi_{i,x}} \prod_{j: c_{ij} < 0} \frac{\Psi_{j,x+d_{ij}}}{\Psi_{j,x + d_{ij}-k d_i}}, \quad \CM_{k,x}^{(i)} := L(\Bm_{k,x}^{(i)}). \label{def: generic prime demazure} 
\end{align}
If $k \in 6 \BZ_{>0}$, then $\Bm_{k,x}^{(i)}$ is dominant and $\CM_{k,x}^{(i)}$ is finite-dimensional. 

\begin{cor}  \label{cor: l-weight prime Demazure}
Let $k \in 6 \BZ_{>0}$ and $(i,x) \in I \times \BC$. If $\Bm_{k,x}^{(i)} \Be$ is an $\ell$-weight of $\CM_{k,x}^{(i)}$, then either $\Be \in \{1, A_{i,x}^{-1}\}$, or $\Be \in A_{j,x+d_{ij}-kd_i}^{-1} \lCQ_x^-$ for certain $j \in I$ with $c_{ij} < 0$. 
\end{cor}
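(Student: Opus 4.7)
My plan is to prove the corollary by a direct first-layer analysis of $\CM_{k,x}^{(i)}$, combined with an induction on height via Corollary \ref{cor: layer structure of l-weights}. The argument is self-contained and does not rely on the tensor product factorization of Remark \ref{rem: factorization}, though that factorization could also be invoked for an alternative proof using Corollary \ref{cor: l-weight Demazure}.

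First I would read off the denominators of the components of $\Bm_{k,x}^{(i)}$ from \eqref{def: generic prime demazure}: the $i$-th component $\frac{u+(x+d_i)\hbar}{u+x\hbar}$ has the linear denominator $u+x\hbar$; for $j\neq i$ with $c_{ij}<0$ the $j$-th component $\frac{u+(x+d_{ij})\hbar}{u+(x+d_{ij}-kd_i)\hbar}$ has the linear denominator with root $-(x+d_{ij}-kd_i)\hbar$; and for $j\neq i$ with $c_{ij}=0$ the $j$-th component equals $1$. Applying Lemma \ref{lem: denominator weight space} to a highest $\ell$-weight vector $\omega\in\CM_{k,x}^{(i)}$ then forces $x_j^-(u)\omega=0$ in the last case, and gives the pole-type identities $(u+x\hbar)x_i^-(u)\omega=\hbar x_{i,0}^-\omega$ and $(u+(x+d_{ij}-kd_i)\hbar)x_j^-(u)\omega=\hbar x_{j,0}^-\omega$ for $c_{ij}<0$. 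Since each denominator has a unique root, expanding each side as a Laurent series in $u^{-1}$ shows that every $x_{j,n}^-\omega$ is a scalar multiple of $x_{j,0}^-\omega$. Consequently the weight space $\CM_{k,x}^{(i)}{}_{\varpi(\Bm_{k,x}^{(i)})-\alpha_j}$ is at most one-dimensional, hence either zero or an $\ell$-weight space.

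Next I would identify the corresponding first-layer $\ell$-weight, i.e., the unique $a\in\BC$ with $A_{j,a}^{-1}\Bm_{k,x}^{(i)}\in\lwt(\CM_{k,x}^{(i)})$. For this I would apply Proposition \ref{prop: generalized simple roots} in the reverse direction: taking $\Bf=\Bm_{k,x}^{(i)}$ and $\Be=A_{j,a}^{-1}\Bm_{k,x}^{(i)}$, the ratio $\Bm_{k,x}^{(i)}/\Be=A_{j,a}$ is pinned down componentwise, and a short computation of $\xi_i(u)\cdot x_{j,0}^-\omega$ using Relation \eqref{rel: Cartan-Drinfeld} matches the coefficient of $u^{-2}$ in $\Be_i(u)$ to the value predicted by $A_{j,a}$. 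This forces $a=x$ when $j=i$ and $a=x+d_{ij}-kd_i$ when $c_{ij}<0$. Hence any first-layer $\ell$-weight of $\CM_{k,x}^{(i)}$ lies in the allowed set $\{A_{i,x}^{-1}\Bm_{k,x}^{(i)}\}\cup\{A_{j,x+d_{ij}-kd_i}^{-1}\Bm_{k,x}^{(i)}:c_{ij}<0\}$.

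With the first layer in hand I would conclude by induction on height using Corollary \ref{cor: layer structure of l-weights}. Given $\Bm_{k,x}^{(i)}\Be$ with $\Be\neq 1$, that corollary produces $\Be=A_{j',y'}^{-1}\Be'$ for some $(j',y')\in I\times\BC$ and some $\Bm_{k,x}^{(i)}\Be'$ at the previous height. Induction yields $\Be'=1$ (in which case $\Be=A_{j',y'}^{-1}$ is first-layer and lies in the allowed set) or $\Be'\in A_{j^\star,y^\star}^{-1}\lCQ_x^-$ with $(j^\star,y^\star)$ in the allowed set, whence $\Be=A_{j^\star,y^\star}^{-1}(A_{j',y'}^{-1}\Be'')\in A_{j^\star,y^\star}^{-1}\lCQ_x^-$. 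To justify the $\lCQ_x^-$ membership, I would realise $\CM_{k,x}^{(i)}$ as a subquotient of the tensor product $W_{1,x}^{(i)}\otimes\bigotimes_{j:c_{ij}<0}W_{kd_i/d_j,\,x+d_{ij}-kd_i}^{(j)}$ of KR modules — all with spectral parameters in $x+\tfrac12\BZ$ — and invoke Theorem \ref{thm: l-weights KR} to deduce $\lwt(\CM_{k,x}^{(i)})\subseteq\Bm_{k,x}^{(i)}\cdot\lCQ_x^-$.

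The main obstacle is the parameter matching in the second step, i.e., showing that the first-layer $\ell$-weight via direction $\alpha_j$ has spectral parameter $a=x$ or $a=x+d_{ij}-kd_i$ rather than any other value. This requires a careful application of Proposition \ref{prop: generalized simple roots} with an explicit Laurent-coefficient computation. Once that identification is secured, the remaining steps are routine propagation.
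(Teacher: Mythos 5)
Your first two steps (the height-one layer) are sound: the denominators of $\Bm_{k,x}^{(i)}$ together with Lemma \ref{lem: denominator weight space} and Proposition \ref{prop: generalized simple roots} do pin down the height-one $\ell$-weights as $\Bm_{k,x}^{(i)}A_{i,x}^{-1}$ and $\Bm_{k,x}^{(i)}A_{j,x+d_{ij}-kd_i}^{-1}$, and the realization of $\CM_{k,x}^{(i)}$ inside $W_{1,x}^{(i)}\otimes\bigotimes_j W^{(j)}_{kd_i/d_j,\,x+d_{ij}-kd_i}$ does give $\lwt(\CM_{k,x}^{(i)})\subseteq \Bm_{k,x}^{(i)}\lCQ_x^-$, exactly as in the paper's first step. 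The gap is in the induction on height, and it occurs precisely at height two. Corollary \ref{cor: layer structure of l-weights} only asserts that $\Be=A_{j',y'}^{-1}\Be'$ for \emph{some} $(j',y')$; it gives no control whatsoever over $y'$. Your case analysis silently omits the branch $\Be'=A_{i,x}^{-1}$, and in that branch $\Be=A_{j',y'}^{-1}A_{i,x}^{-1}$ lies in one of the permitted cosets $A_{j,x+d_{ij}-kd_i}^{-1}\lCQ_x^-$ only if $y'=x+d_{ij'}-kd_i$ with $c_{ij'}<0$ --- and nothing in your argument forces this. Knowing the first layer does not bound the second layer, because the ``denominator'' technique of Lemma \ref{lem: denominator weight space} applies only to the highest $\ell$-weight vector.

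This is not a removable technicality: it is the entire content of the corollary outside the simply-laced case. Theorem \ref{thm: l-weights KR}(b) applied to the factor $W_{1,x}^{(i)}$ of your own tensor decomposition permits the height-two monomial $A_{i,x}^{-1}A_{i',z}^{-1}$ with $c_{ii'}<0$ and $z\in\mathfrak{Z}_{1,x}^{(ii')}$, i.e. $z$ within distance $\tfrac32$ of $x$; since $k\geq 6$, such a $z$ is never of the form $x+d_{ij}-kd_i$, so if this monomial occurred as an $\ell$-weight of $\CM_{k,x}^{(i)}$ the corollary would simply be false. Excluding it requires a \emph{second} realization of $\CM_{k,x}^{(i)}$, as an irreducible subquotient of $D_{k,x}^{(i,1)}\otimes A$ where $A$ is a tensor product of asymptotic modules, and then an appeal to Corollary \ref{cor: l-weight Demazure} to show that $A_{i',z}^{-1}$ can be a factor of a monomial of neither $\nqc(D_{k,x}^{(i,1)})$ nor $\nqc(A)$ (the latter leading to the $G_2$ contradiction). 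You mention Corollary \ref{cor: l-weight Demazure} only as an ingredient of an ``alternative proof''; in fact it is indispensable, and without it your induction cannot close.
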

\begin{proof}
View $\CM_{k,x}^{(i)}$ as an irreducible sub-quotient of $W_{1,x}^{(i)} \otimes M$ where
$$ M := \bigotimes_{j: c_{ij} < 0} W_{\frac{k}{d_j}d_i, x+d_{ij}-kd_i}^{(j)}$$
is a tensor product of KR modules of arbitrary order. Then $\Be = \Be' \Be''$ with $\Be'$ and $\Be''$ being monomials in the normalized q-characters of $W_{1,x}^{(i)}$ and $M$ respectively. So $\Be \in \lCQ_x^-$. Together with Corollary \ref{cor: layer structure of l-weights}, we have $\lwt^n(\CM_{k,x}^{(i)}) \subset \lwt^2(\CM_{k,x}^{(i)}) \lCQ_x^-$ for $n \geq 2$. We may therefore assume  that the $\ell$-weight $\Bm_{k,x}^{(i)} \Be$ of $\CM_{k,x}^{(i)}$ is of height $\leq 2$. 

If $\Be'' \neq 1$, Theorem \ref{thm: l-weights KR} applied to $M$, there exists $j \in I$ such that $c_{ij} < 0$ and  $\Be'' \in A_{j, x+d_{ij}-kd_i}^{-1} \lCQ_x^-$. This implies that $\Be \in A_{j, x+d_{ij}-kd_i}^{-1} \lCQ_x^-$.  

Assume $\Be'' = 1$, so that $\Be = \Be'$ and $\Bw_{1,x}^{(i)} \Be$ is an $\ell$-weight of $W_{1,x}^{(i)}$ of height $\leq 2$.  By Theorem \ref{thm: l-weights KR}, either $\Be \in \{1, A_{i,x}^{-1}\}$, or $\Be = A_{i,x}^{-1} A_{i',x+d_{ii'}}^{-1}$ for certain $i' \in I$ with $c_{ii'} < 0$, because $\mathfrak{Z}_{1,x}^{(ii')} = \{x+d_{ii'}\}$. Let us assume that the latter case happens and arrive at a contradiction.
View $\CM_{k,x}^{(i)}$ as an irreducible sub-quotient of $D_{k,x}^{(i,1)} \otimes A$ where $A$ is the following tensor product of asymptotic modules
$$ A := \bigotimes_{j: c_{ij}=-2} \SL(\frac{\Psi_{j,x-k}}{\Psi_{j,x}}) \otimes \bigotimes_{j:c_{ij}=-3} \left( \SL(\frac{\Psi_{j,x+\frac{1}{2}-k}}{\Psi_{j,x+\frac{1}{2}}}) \otimes \SL(\frac{\Psi_{j,x-\frac{1}{2}-k}}{\Psi_{j,x-\frac{1}{2}}}) \right). $$
Then $A_{i,x}^{-1} A_{i',x+d_{ii'}}^{-1} = \Be^D \Be^A$ where $\Be^D$ and $\Be^A$ are monomials in $\nqc(D_{k,x}^{(i,1)})$ and $\nqc(A)$ respectively.
Since $d_{ii'} \geq -\frac{3}{2}$ and $k \geq 6$, we have $x +d_{ii'}  \neq x - kd_i + s$ for any half integer $s$ between $-\frac{3}{2}$ and $\frac{1}{2}$. By Corollary \ref{cor: l-weight Demazure}, neither $A_{i',x+d_{ii'}}^{-1}$ nor $A_{i,x}^{-1} A_{i',x+d_{ii'}}^{-1}$ is a monomial in $\nqc(D_{k,x}^{(i,1)})$, so at least one of them is a monomial in $\nqc(A)$. Then $A_{i',x+d_{ii'}}^{-1}$ has to be a monomial in $\nqc(A)$. One obtains $j$ such that $c_{ij} = -3$ and $d_{ii'} = -\frac{1}{2} $. This implies that $\Glie$ is of type $G_2$ and $d_i = 1$, but then $c_{ii'} = 2 d_{ii'} = -1$ is an entry of the Cartan matrix of $\Glie$, absurd.
\end{proof}

\begin{theorem}  \label{thm: main}
Let $(i, k, x, y) \in I \times \BC^3$. If $k \notin \frac{1}{2} \BZ$, then in $K_0(\BGG)$ holds
\begin{multline} \label{equ: TQ}
\quad \quad \quad [\CM_{k,x}^{(i)}] [\CL(\frac{\Psi_{i,x}}{\Psi_{i,y}})] =  [\SL(\frac{\Psi_{i,x+d_i}}{\Psi_{i,y}})] \prod_{j: c_{ij} < 0} [\SL(\frac{\Psi_{j,x+d_{ij}}}{\Psi_{j,x+d_{ij}-kd_i}})]  \\
 +  [\SL(\frac{\Psi_{i,x-d_i}}{\Psi_{i,y}})] \prod_{j: c_{ij} < 0} [\SL(\frac{\Psi_{j,x-d_{ij}}}{\Psi_{j,x+d_{ij}-kd_i}})].
\end{multline}
\end{theorem}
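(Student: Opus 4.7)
The plan is to derive \eqref{equ: TQ} from the short exact sequence of Theorem \ref{thm: KR and Demazure} by a limiting procedure, in the spirit of \cite[Appendix A]{FZ} and \cite{Z4}. Applying the spectral parameter shift $\tau_{x-(k+1)d_i}$ of Remark \ref{rem: spectral shift} to the exact sequence, one obtains in $K_0(\BGG)$
\begin{equation*}
[W_{k,x-(k+1)d_i}^{(i)}][W_{k+t,x-kd_i}^{(i)}] = [D_{k,x}^{(i,t)}] + [W_{k-1,x-kd_i}^{(i)}][W_{k+t+1,x-(k+1)d_i}^{(i)}]
\end{equation*}
for integers $k \in 6\BZ_{>0}$ and $t \in \BZ_{\geq 0}$. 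Letting $t \to \infty$ along the inductive system of Subsection \ref{sub: idea}, the two KR classes of growing rank converge in the completed Grothendieck ring to the asymptotic classes $[\SL(\Psi_{i,y}/\Psi_{i,x-kd_i})]$ and $[\SL(\Psi_{i,y}/\Psi_{i,x-(k+1)d_i})]$ by Theorem \ref{thm: asymptotic module}, with $y \in \BC$ the free parameter produced by the limit.

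The crucial step is to identify $\lim_{t\to\infty}[D_{k,x}^{(i,t)}]$ as a product of $[\CM_{k,x}^{(i)}]$ and further asymptotic classes. From \eqref{equ: Demazure weight} and \eqref{def: generic prime demazure} the highest $\ell$-weight of the limit module factors as $\Bm_{k,x}^{(i)} \cdot \frac{\Psi_{i,y}}{\Psi_{i,x+d_i}} \cdot R_{k,x}$, where $R_{k,x}$ is the rightmost product in \eqref{equ: Demazure weight}, itself a product of highest $\ell$-weights of asymptotic modules $\SL(\Psi_{j,x+s}/\Psi_{j,x+s-kd_i})$ over nodes $j$ with $c_{ij} \in \{-2,-3\}$ and certain half-integer shifts $s$. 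I would then show that the tensor product of $\CM_{k,x}^{(i)}$ with all these asymptotic factors is irreducible, so that its class in the completion equals $\lim_{t\to\infty}[D_{k,x}^{(i,t)}]$. The proof of irreducibility proceeds via normalized q-characters: by Corollary \ref{cor: l-weight prime Demazure} each non-trivial monomial of $\nqc(\CM_{k,x}^{(i)})$ lies in $\{A_{i,x}^{-1}\} \cup \bigcup_{j:c_{ij}<0} A_{j,x+d_{ij}-kd_i}^{-1}\lCQ_x^-$, whereas the normalized q-characters of the asymptotic factors are supported in $\lCQ_x^-$ with generalized simple roots $A_{i',w}^{-1}$ at spectral parameters $w$ close to $x+d_i$ or to $x+s-kd_i$. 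The assumption $k \notin \frac{1}{2}\BZ$ separates these two families of spectral parameters and prevents multiplicative cross-cancellation, so that the product of normalized q-characters is the normalized q-character of a single irreducible.

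Substituting the factorization back into the limit relation and using the two-term identity \eqref{equ: two-terms} to convert the remaining finite-rank classes $[W_{k,\cdot}^{(i)}]$ and $[W_{k-1,\cdot}^{(i)}]$ into ratios of asymptotic modules yields \eqref{equ: TQ} for $k \in 6\BZ_{>0}$; the extension to all $k \notin \frac{1}{2}\BZ$ reduces to an explicit comparison of normalized q-characters of both sides, whose structure is controlled by the same $\ell$-weight estimates. The principal obstacle is the factorization of the limit Demazure class in the middle paragraph: for simply-laced $\Glie$ it is essentially trivial because $R_{k,x}=1$ and $D_{k,x}^{(i,1)}=\CM_{k,x}^{(i)}$, but for non-simply-laced $\Glie$ it is the technical heart of the argument, relying on the full strength of Corollary \ref{cor: l-weight prime Demazure} and on the hypothesis $k \notin \frac{1}{2}\BZ$, and is the reason for isolating the sets $\mathfrak{Z}_{k,x}^{(ii')}$ in Table \eqref{tab: denominator}.
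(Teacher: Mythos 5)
Your proposal has a genuine gap, and it is structural. Everything you build on --- Theorem \ref{thm: KR and Demazure}, the modules $D_{k,x}^{(i,t)}$, and Corollary \ref{cor: l-weight prime Demazure} --- exists only for \emph{integer} $k$ (indeed $k\in 6\BZ_{>0}$ for the corollary), whereas the theorem is stated precisely for $k\notin\frac{1}{2}\BZ$, and this hypothesis is not a technicality: the identity \eqref{equ: TQ} is equivalent to the normalized q-character formula $\nqc(\CM_{k,x}^{(i)})=(1+A_{i,x}^{-1})\prod_{j:c_{ij}<0}\nqc(\SL(\Psi_{j,x+d_{ij}}/\Psi_{j,x+d_{ij}-kd_i}))$, whose right-hand side is an infinite sum whenever some $c_{ij}<0$, while for $k\in 6\BZ_{>0}$ the module $\CM_{k,x}^{(i)}$ is finite-dimensional. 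So the statement you would derive at integer $k$ is not a special case of the theorem, and the ``extension to all $k\notin\frac{1}{2}\BZ$ by comparison of normalized q-characters,'' which you dispatch in one sentence, is in fact the entire theorem; there is no continuity principle in $K_0(\BGG)$ that interpolates between the two regimes. Relatedly, your irreducibility argument for $\CM_{k,x}^{(i)}$ tensored with the asymptotic factors invokes the separation of spectral parameters coming from $k\notin\frac{1}{2}\BZ$ while you are simultaneously working with $k\in 6\BZ_{>0}$; for integer $k$ the submonoids $\lCQ_x^-$ and $\lCQ_{x-kd_i}^-$ coincide, the asymptotic modules $\SL(\Psi_{j,x+s}/\Psi_{j,x+s-kd_i})$ are themselves reducible, and the claimed irreducibility fails. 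Finally, ``convergence of classes in the completed Grothendieck ring'' as $t\to\infty$ and the identification of $\lim_t[D_{k,x}^{(i,t)}]$ with the class of a specific tensor product are not justified; a limit of q-characters of irreducibles need not be the q-character of an irreducible, let alone of a prescribed tensor product.

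For contrast, the paper never passes to the limit of the exact sequence. It reduces \eqref{equ: TQ} to the q-character formula above (which eliminates $y$) and proves that formula for generic $k$ by a two-sided bound: the lower bound comes from showing that the tensor products $S^{\pm}$ of asymptotic modules are irreducible --- using right-negativity and the hypothesis $k\notin\frac{1}{2}\BZ$ to ensure that dominant $\ell$-weights of the approximating finite tensor products $S_n$ are unique --- and occur as sub-quotients of $\CM_{k,x}^{(i)}\otimes\SL(\Psi_{i,x}/\Psi_{i,y})$; the upper bound comes from realizing $\CM_{k,x}^{(i)}$ (generic $k$) as an irreducible sub-quotient of $\CM_{n,x}^{(i)}\otimes B_n$ for large \emph{integer} $n$ and applying Corollary \ref{cor: l-weight prime Demazure} to the integer-parameter factor $\CM_{n,x}^{(i)}$. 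The exact sequence and the factorization of the Demazure-like module enter only through the construction of the asymptotic modules and through Remark \ref{rem: factorization}, not as the identity to be passed to the limit. If you want to salvage your plan, you would need to replace the limit of the exact sequence by an argument that works directly at generic $k$, which is essentially what the paper's sandwich argument does.
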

When $\Glie = \mathfrak{sl}_2$, we recover Example \ref{example: KR sl2} as $\CM_{k,x}^{(1)} \cong \BC_x^2$ and $\SL(\frac{\Psi_{1,x}}{\Psi_{1,y}}) \cong \SL_y^x$.
\begin{proof}
Equation \eqref{equ: TQ} is equivalent to the normalized q-character formula:
\begin{equation} \label{equ: character formula}
\nqc(\CM_{k,x}^{(i)}) = (1 + A_{i,x}^{-1}) \prod_{j: c_{ij} < 0} \nqc(\SL(\frac{\Psi_{j,x+d_{ij}}}{\Psi_{j,x+d_{ij}-kd_i}})).
\end{equation}
This formula does not involve $y \in \BC$. So we may assume without loss of generality that neither $y - x$ nor $y - x + kd_i$ belongs to $\frac{1}{2} \BZ$. 

We follow closely the proof of \cite[Theorem 5.1]{Z4}. Introduce tensor products of asymptotic/KR modules (arbitrary order of tensor products):
\begin{gather*}
S^{\pm} := \SL(\frac{\Psi_{i,x\pm d_i}}{\Psi_{i,y}}) \otimes  \bigotimes_{j: c_{ij} < 0} \SL(\frac{\Psi_{j,x\pm d_{ij}}}{\Psi_{j,x+d_{ij}-kd_i}}), \\
T_n := \bigotimes_{j: c_{ij}<0} W_{n,x+d_{ij}-kd_i}^{(j)}, \quad S_n := \ W_{n,y}^{(i)} \otimes T_n  \quad \mathrm{for}\ n \in \BZ_{>0}.
\end{gather*}
One views $S^+, S^-$ and $S_n$ as analogs of $S^0, S^1$ and $S_n'$ in the proof of Claim 1 in \cite[Theorem 5.1]{Z4}. Let $\Bs^{\pm}, \Bt_n$ and $\Bs_n$ be the obvious highest $\ell$-weights of $S^{\pm}, T_n$ and $S_n$ as products of highest $\ell$-weights of asymptotic/KR modules. Then $\Bs^- =  \Bs^+A_{i,x}^{-1}$.

{\bf Step 1: irreducibility of the tensor products.}

Any $\ell$-weight of $S_n$ is of the form $\Be' \Be''$ where $\Be' $ and $\Be''$ are $\ell$-weights of $W_{n,y}^{(i)}$ and $T_n$ respectively. By Theorem \ref{thm: l-weights KR}, $\Be'$ is a monomial in the $Y_{j,b}^{\pm 1}$ with $b \in y + \frac{1}{2} \BZ$, while $\Be''$ is such a monomial but with $b \in x-kd_i + \frac{1}{2} \BZ$. Since $x-kd_i - y \notin \frac{1}{2} \BZ$, the $\ell$-weight $\Be' \Be''$ is dominant if and only if both $\Be'$ and $\Be''$ are dominant. 

Suppose $\Be'' \neq \Bt_n$. We prove that it is right-negative. 
By Theorem \ref{thm: l-weights KR} there exists $i' \in I$ such that $c_{ii'} < 0$ and $\Be'' \in \Bt_n A_{i', x+d_{ii'} - kd_i }^{-1} \lCQ^-_{x-kd_i}$. It suffices to show that $\Bt_n A_{i',x+d_{ii'} - kd_i}^{-1}$ is right-negative. By Equation \eqref{equ: KR dominant}, $\Bt_n$ is of the form
$$ \prod_{j: c_{ij} < 0} Y_{j,x + d_{ij} + \frac{1}{2}d_j-kd_i}^{\uparrow}. $$
Here $Y_{j,a}^{\uparrow}$ for $(j,a) \in I \times \BC$ means a certain product of $Y_{j,a}$ with the $Y_{j,b}$ such that $b  \in a + \frac{1}{2} \BZ_{>0}$. By Example \ref{example: simple roots right-negative}, the right-negativity of $A_{i',a}^{-1}$ for $a \in \BC$ arises from the negative power $Y_{i',a-\frac{1}{2}d_{i'}}^{-1}$. So we are led to prove that for all $j \in I$ with $c_{ij} < 0$ the following half integer is strictly positive:
$$ (x+d_{ij} + \frac{1}{2}d_j-kd_i) - (x+d_{ii'} - kd_i- \frac{1}{2}d_{i'}) =  d_{ij} + \frac{1}{2}d_j - d_{ii'} + \frac{1}{2}d_{i'} > 0. $$
Assume the contrary: the left-hand side as a half integer is non-positive. Since $d_j$ and $d_{i'}$ are strictly positive integers, we must have $i' \neq j$.  Next,
$$ \frac{1}{2}d_j (c_{ji} + 1)  = d_{ij}+ \frac{1}{2}d_j \leq  d_{ii'} - \frac{1}{2}d_{i'} = \frac{1}{2} d_{i'}(c_{i'i} - 1) \leq - d_{i'} \leq -1. $$
So $c_{ji} < - 1$. This implies $d_j = 1$ and $c_{ji} \leq -3$. The Lie algebra $\Glie$ must be of rank two and of type $G_2$, but $i, i', j$ are two-by-two distinct, absurd. 

Similarly, if $\Be' \neq \Bw_{n,y}^{(i)}$, then $\Be'$ is right-negative. $S_n$ admits only one dominant $\ell$-weight and must be irreducible. View it as an irreducible sub-quotient of  
\begin{align*}
L(\Bs^{\pm}) \otimes \SL(\frac{\Psi_{i,y+nd_i}}{\Psi_{i,x\pm d_i}}) \otimes  \bigotimes_{j: c_{ij} < 0} \SL(\frac{\Psi_{j,x+d_{ij}-kd_i+nd_j}}{\Psi_{j,x\pm d_{ij}}}).
\end{align*}
By assumption, $\lCQ_x^-$ and $\lCQ_y^- \lCQ_{x-kd_i}^-$ have trivial intersection. We obtain as in the proof of Claim 1 in \cite[Theorem 5.1]{Z4} that all monomials (counted with multiplicity) in  the normalized q-character $S_n$ must appear in that of $L(\Bs^{\pm})$, i.e., $\nqc(S_n)$ is bounded above by $\nqc(L(\Bs^{\pm}))$. As $n$ goes to infinity, $\nqc(S_n)$ converges to $\nqc(S^{\pm})$, and so the latter is bounded above by $\nqc(L(\Bs^{\pm}))$. Comparing highest $\ell$-weights, we conclude that $L(\Bs^{\pm}) \cong S^{\pm}$, namely, $S^{\pm}$ are also irreducible.

Claim 2 of \cite[Theorem 5.1]{Z4} adapted directly to the present situation, $S^{\pm}$ appear as irreducible sub-quotients of the tensor product $\CM_{k,x}^{(i)} \otimes \SL(\frac{\Psi_{i,x}}{\Psi_{i,y}})$, and $\nqc(\CM_{k,x}^{(i)})$ is bounded below by the right-hand side of Equation \eqref{equ: character formula}. 

{\bf Step 2: upper bound for the normalized q-character.}

 For $n \in 6 \BZ_{>0}$, view $\CM_{k,x}^{(i)}$ as an irreducible sub-quotient of $\CM_{n,x}^{(i)} \otimes B_n$, where $B_n$ is the following tensor product of asymptotic modules:
\begin{align*}
B_n &=   \bigotimes_{j: c_{ij} < 0}\SL(\frac{\Psi_{j,x+d_{ij}-nd_i}}{\Psi_{j,x+d_{ij}-kd_i}}).
\end{align*}
Fix an $\ell$-weight $\Bm_{k,x}^{(i)} \Be$ of $\CM_{k,x}^{(i)}$. It is of the form $\Be = \Be_n^M \Be_n^B$ such that $\Be_n^M \in \lCQ_x^-$ and $\Be_n^B \in \lCQ_{x-kd_i}^-$ are monomials in $\nqc(\CM_{n,x}^{(i)})$ and $\nqc(B_n)$ respectively; such a factorization is unique because $\lCQ_x^-$ and $\lCQ_{x-kd_i}^-$ intersect trivially. 
Let $n$ be so large that none of the $A_{j,x+d_{ij}-nd_i}^{-1}$ for $j \in I$ appears as a factor of the monomial $\Be$. By Corollary \ref{cor: l-weight prime Demazure}, either $\Be_n^M = 1$ or $\Be_n^M = A_{i,x}^{-1}$. In both cases, the $\ell$-weight space of $\CM_{n,x}^{(i)}$ of $\ell$-weight $\Bm_{n,x}^{(i)} \Be_n^M$ is one-dimensional, as seen from the proof of Corollary \ref{cor: l-weight prime Demazure}. This implies that $\nqc(\CM_{k,x}^{(i)})$ is bounded above by $\nqc(B_n) (1+A_{i,x}^{-1})$, which is the right-hand side of Equation \eqref{equ: character formula}. Together with the lower bound at Step 1, we obtain Equation \eqref{equ: character formula}. The proof of Theorem \ref{thm: main} is completed. 
\end{proof}

Let us be at Step 1 of the proof. $S^{+}$ being irreducible, its first tensor factor must be irreducible. We obtain the following result.

\begin{cor}
For $i \in I$ and $x, y \in \BC$ such that $y - x \notin \frac{1}{2}\BZ$, the asymptotic module $\SL(\frac{\Psi_{i,y}}{\Psi_{i,x}})$ is irreducible.
\end{cor}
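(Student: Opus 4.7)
The strategy is exactly the one hinted at just before the statement: realize $\SL(\frac{\Psi_{i,y}}{\Psi_{i,x}})$ as the first tensor factor of the module $S^{+}$ constructed in Step~1 of the proof of Theorem~\ref{thm: main}, then transfer the irreducibility of $S^{+}$ to each of its tensor factors. No new analysis of $\ell$-weights is needed; the corollary is, morally speaking, a byproduct of work already done.

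Concretely, I will instantiate the proof of Theorem~\ref{thm: main} with Dynkin node $i$ (as in the corollary) and spectral parameters chosen so that its ``$(x_T, y_T)$'' satisfy $x_T + d_i = y$ and $y_T = x$. Then the first tensor factor
$\SL(\frac{\Psi_{i,x_T+d_i}}{\Psi_{i,y_T}})$ of $S^{+}$ equals precisely $\SL(\frac{\Psi_{i,y}}{\Psi_{i,x}})$. The genericity hypothesis $y_T - x_T \notin \frac{1}{2}\BZ$ of the proof of Theorem~\ref{thm: main} rewrites as $x - y + d_i \notin \frac{1}{2}\BZ$, which is equivalent to the corollary's hypothesis $y-x \notin \frac{1}{2}\BZ$ since $d_i \in \BZ$. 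The parameter $k$ remains a free variable of the construction; the two further genericity conditions appearing in Step~1, namely $k \notin \frac{1}{2}\BZ$ and $y_T-x_T+k d_i \notin \frac{1}{2}\BZ$, each exclude only a countable subset of $\BC$, so I can (and do) choose a $k$ for which Step~1 applies verbatim and yields irreducibility of $S^{+}$.

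Finally, from irreducibility of $S^{+}$ I deduce irreducibility of its first tensor factor by a standard and easy observation: if $U \subsetneq \SL(\frac{\Psi_{i,y}}{\Psi_{i,x}})$ were a proper nonzero $\Yang$-submodule, then $U \otimes \bigotimes_{j: c_{ij}<0} \SL(\frac{\Psi_{j,x+d_{ij}}}{\Psi_{j,x+d_{ij}-kd_i}})$ would be a proper nonzero $\Yang$-submodule of $S^{+}$, contradicting Step~1. Here I only use that the coproduct of $\Yang$ sends a submodule of the leftmost factor to a submodule of the tensor product, a direct consequence of Lemma~\ref{lem: coproduct estimation} (or more elementarily, of $\Delta(\Yang) \subseteq \Yang \otimes \Yang$ acting factorwise on such subspaces).

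There is no real obstacle; the essential content is already packaged in Step~1 of Theorem~\ref{thm: main}. The only points that require care are the translation of parameters between the two statements and the verification that the excluded countable sets for $k$ do not prevent one from applying Step~1. Both are routine, so the proof will be short.
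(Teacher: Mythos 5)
Your proposal is correct and is essentially the paper's own argument: the paper obtains the corollary in exactly this way, by observing that $\SL(\frac{\Psi_{i,y}}{\Psi_{i,x}})$ is the first tensor factor of the module $S^{+}$ shown to be irreducible in Step~1 of the proof of Theorem~\ref{thm: main}, with a free choice of $k$ making the genericity conditions there hold. Your explicit parameter translation and the standard fact that a tensor factor of an irreducible module is irreducible are precisely the (implicit) content of the paper's two-sentence proof.
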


\begin{rem}  \label{rem: factorization}
For $(i,k,x) \in I \times \BC^2$ define
 \begin{align*}
\Bn_{k,x}^{(i)} &:= \prod_{j: c_{ij} = -2} \frac{\Psi_{j,x}}{\Psi_{j,x-k} }  \times \prod_{j: c_{ij} = -3} \frac{\Psi_{j,x+\frac{1}{2}} \Psi_{j,x-\frac{1}{2}}}{\Psi_{j,x+\frac{1}{2}-k} \Psi_{j,x-\frac{1}{2}-k}}.
\end{align*}
If $k \in 6 \BZ_{>0}$, then $\Bm_{k,x}^{(i)} \Bn_{k,x}^{(i)} = \Bd_{k,x}^{(i,1)}$ by Equation \eqref{equ: Demazure weight}. Using Corollary \ref{cor: l-weight Demazure} one can modify the proof of Theorem \ref{thm: main} to obtain a tensor product factorization: 
$$ L(\Bm_{k,x}^{(i)} \Bn_{k,x}^{(i)} ) \cong L(\Bm_{k,x}^{(i)}) \otimes L(\Bn_{k,x}^{(i)}) \quad \mathrm{if\ either}\ k \notin \frac{1}{2} \BZ \ \mathrm{or}\ k \in 6 \BZ_{>0}. $$
\end{rem}

\begin{example} 
Let $\Glie = \mathfrak{sl}_3$ and $i = 1$. Suppose $k \in \BC \setminus \frac{1}{2} \BZ$. The KR module $W_{1,0}^{(1)}$ is a vector representation of $Y_{\hbar}(\mathfrak{sl}_3)$ on $\BC^3$. From Theorem \ref{thm: l-weights KR} we get:
\begin{gather*}
\qc(W_{1,0}^{(1)}) = \frac{\Psi_{1,1}}{\Psi_{1,0}}(1 + A_{1,0}^{-1} + A_{1,0}^{-1} A_{2,-\frac{1}{2}}^{-1})  = \frac{\Psi_{1,1}}{\Psi_{1,0}} + \frac{\Psi_{1,-1}}{\Psi_{1,0}} \frac{\Psi_{2,\frac{1}{2}}}{\Psi_{2,-\frac{1}{2}}} + \frac{\Psi_{2,-\frac{3}{2}}}{\Psi_{2,-\frac{1}{2}}}, \\
[W_{1,0}^{(1)}] = \frac{[\SL(\frac{\Psi_{1,1}}{\Psi_{1,0}})]}{[\SL(\frac{\Psi_{1,0}}{\Psi_{1,0}})]} + \frac{[\SL(\frac{\Psi_{1,-1}}{\Psi_{1,0}})]}{[\SL(\frac{\Psi_{1,0}}{\Psi_{1,0}})]}\frac{[\SL(\frac{\Psi_{2,\frac{1}{2}}}{\Psi_{2,0}})]}{[\SL(\frac{\Psi_{2,-\frac{1}{2}}}{\Psi_{2,0}})]}  + \frac{[\SL(\frac{\Psi_{2,-\frac{3}{2}}}{\Psi_{2,0}})]}{[\SL(\frac{\Psi_{2,-\frac{1}{2}}}{\Psi_{2,0}})]}, \\
[\CM_{k,x}^{(1)}] [\SL(\frac{\Psi_{1,x}}{\Psi_{1,y}})] = [\SL(\frac{\Psi_{1,x+1}}{\Psi_{1,y}})]  [\SL(\frac{\Psi_{2,x-\frac{1}{2}}}{\Psi_{2, x-\frac{1}{2}-k}})] +  [\SL(\frac{\Psi_{1,x-1}}{\Psi_{1,y}})]  [\SL(\frac{\Psi_{2,x+\frac{1}{2}}}{\Psi_{2, x-\frac{1}{2}-k}})].
\end{gather*}
One can show that as $\mathfrak{sl}_3$-module $\CM_{k,x}^{(1)}$ is irreducible of highest weight $\varpi_1 + k \varpi_2$.
\end{example}

\section{Prefundamental modules and shifted Yangians}   \label{sec: prefund}

In this section we construct modules of highest $\ell$-weight $\Psi_{i,x}^{\pm1}$ for $(i,x) \in I \times \BC$. These will be modules over the so-called {\it shifted Yangians}. We assume $\hbar = 1$. 

Let $\varpi_i^{\vee} \in \Hlie$ for $i \in I$ be the $i$th fundamental coweight; it is determined by the equations $\langle \varpi_i^{\vee}, \alpha_j\rangle = \delta_{ij}$ for $j \in I$, where $\langle, \rangle$ denotes the evaluation map $\Hlie \times \Hlie^* \longrightarrow \BC$. We get the coweight lattice $\BQ^{\vee} = \oplus_{i\in I} \BZ \varpi_i^{\vee}$. 

For a coweight $\mu \in \BQ^{\vee}$, recall the {\it shifted Yangian} $Y_{\mu}(\Glie)$ from \cite[Definition B.2]{BFN}: it is an algebra  generated by $E_i^{(m)}, F_i^{(m)}, H_i^{(p)}$ for $(i,m,p) \in I \times \BZ_{>0} \times \BZ$ subject to the following relations for $(i,j, m, n, p, q) \in I^2 \times \BZ_{>0}^2 \times \BZ^2$:
\begin{gather*}
[H_i^{(p)}, H_j^{(q)}] = 0,\quad [E_i^{(m)}, F_j^{(n)}] = \delta_{ij} H_i^{(m+n-1)}, \\
[H_i^{(p+1)}, E_j^{(m)}] - [H_i^{(p)}, E_j^{(m+1)}] = d_{ij}  (H_i^{(p)} E_j^{(m)} + E_j^{(m)} H_i^{(p)}), \\
[H_i^{(p+1)}, F_j^{(m)}] - [H_i^{(p)}, F_j^{(m+1)}] = -d_{ij} (H_i^{(p)} F_j^{(m)} + F_j^{(m)} H_i^{(p)}),  \\
[E_i^{(m+1)}, E_j^{(n)}] - [E_i^{(m)}, E_j^{(n+1)}] = d_{ij} (E_i^{(m)} E_j^{(n)} + E_j^{(n)} E_i^{(m)}),  \\
[F_i^{(m+1)}, F_j^{(n)}] - [F_i^{(m)}, F_j^{(n+1)}] = -d_{ij} (F_i^{(m)} F_j^{(n)} + F_j^{(n)} F_i^{(m)}), \\
\mathrm{ad}_{E_i^{(1)}}^{1-c_{ij}} (E_j^{(m)}) = 0 = \mathrm{ad}_{F_i^{(1)}}^{1-c_{ij}} (F_j^{(m)}) \quad \mathrm{if}\ i \neq j, \\
H_i^{(-\langle\mu, \alpha_i\rangle) } = 1 \quad \mathrm{and}\quad H_i^{(p)} = 0 \quad \mathrm{for}\  p < -\langle \mu, \alpha_i\rangle.
\end{gather*}
When the relations at the last line are dropped, one gets the {\it Cartan doubled Yangian} $Y_{\infty}(\Glie)$; see \cite[Definition B.1]{BFN}. Let $\pi_{\mu}: Y_{\infty}(\Glie) \longrightarrow Y_{\mu}(\Glie)$ be the quotient map. Define the generating series with coefficients in $Y_{\infty}(\Glie)$ or in $Y_{\mu}(\Glie)$ for $i \in I$: 
$$ E_i(u) := \sum_{m=1}^{+\infty} E_i^{(m)} u^{-m},\quad F_i(u) := \sum_{m=1}^{+\infty} F_i^{(m)} u^{-m},\quad H_i(u) := \sum_{p=-\infty}^{+\infty} H_i^{(p)} u^{-p}. $$
One has the spectral parameter shift, an algebra automorphism $\tau_x$ on $Y_{\mu}(\Glie)$:
$$ \tau_x: \quad E_i(u) \mapsto E_i(u+x),\quad F_i(u) \mapsto F_i(u+x),\quad H_i(u) \mapsto H_i(u+x). $$

Fix $i \in I$. We relate $Y_{\infty}(\Glie)$ to the ordinary Yangian $Y_{\hbar = 1}(\Glie)$. For $l \in \BC^{\times}$ the following defines an algebra morphism $\theta_{i,l}: Y_{\infty}(\Glie) \longrightarrow Y_{\hbar=1}(\Glie)$,
\begin{gather*}
E_j(u) \mapsto x_j^+(u),\quad F_j(u) \mapsto (\frac{1}{ld_i})^{\delta_{ij}} x_j^-(u),\quad H_j(u) \mapsto (\frac{1}{ld_i})^{\delta_{ij}} \xi_j(u).
\end{gather*}
In particular, $\theta_{i,d_i^{-1}}$ induces an isomorphism $Y_{\mu = 0}(\Glie) \cong Y_{\hbar = 1}(\Glie)$.

Let us be in the situation of Section \ref{sec: asym} with $\hbar = 1$. We have an inductive system of vector spaces $(W_{l,0}^{(i)}, F_{k,l})$. Each of the $W_{l,0}^{(i)}$ becomes an $Y_{\infty}(\Glie)$-module after pullback by $\theta_{i,l}$. Let us rewrite Lemma \ref{lem: compt with x+} and Equations \eqref{rel: compt with xi j}--\eqref{rel: compt with x i} in terms of generators of $Y_{\infty}(\Glie)$. For $j \in I$ and $l < l+1 < k$ we have
\begin{gather*} 
E_j(u) F_{k,l} = F_{k,l} E_j(u);  \\
H_j(u) F_{k,l} = F_{k,l} H_j(u), \quad  F_j(u)F_{k,l} = F_{k,l} F_j(u) \quad \mathrm{if}\ j \neq i; \\
H_i(u) F_{k,l} = \left(\frac{ld_i}{u+ld_i} + \frac{uld_i}{u+ld_i} (kd_i)^{-1}  \right) F_{k,l} H_i(u); \\
F_i(u) F_{k,l} = F_{k,l+1}  \left(B_i^l(u) + A_i^l(u) (kd_i)^{-1}  \right).
\end{gather*}
For each generator $t$ of $Y_{\infty}(\Glie)$ among the $\{E_j^{(m)}, F_j^{(m)}, H_j^{(p)} \}$ and for $l \in \BZ_{>0}$, there exist uniquely two linear maps $C_t^l$ and $D_t^l$ from $W_{l,0}^{(i)}$ to $W_{l+1,0}^{(i)}$ such that:
$$ t F_{k,l} = F_{k,l+1} (C_t^l + D_t^l (kd_i)^{-1} ) \quad \mathrm{for}\ k > l+1. $$
As in Section \ref{sec: asym}, the $C_t^l$ and $D_t^l$ are morphisms of inductive systems of vector spaces. Let $C_t$ and $D_t$ be their inductive limits, which are linear endomorphisms on $W_{\infty}^{(i)}$. Then the assignment $t \mapsto C_t$ defines a representation $\rho_i$ of $Y_{\infty}(\Glie)$ on $W_{\infty}^{(i)}$.

\begin{prop}  \label{prop: negative pre}
The representation $\rho_i$ of $Y_{\infty}(\Glie)$ factorizes through the projection $\pi_{-\varpi_i^{\vee}}: Y_{\infty}(\Glie) \longrightarrow Y_{-\varpi_i^{\vee}}(\Glie)$ and we get an $Y_{-\varpi_i^{\vee}}(\Glie)$-module structure on $W_{\infty}^{(i)}$. For $x \in \BC$, the negative prefundamental module, denoted by $L_{i,x}^-$, is defined to be the pullback of the $Y_{-\varpi_i^{\vee}}(\Glie)$-module $W_{\infty}^{(i)}$ by $\tau_x$.
\end{prop}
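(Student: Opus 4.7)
The plan is to verify that $\rho_i$ satisfies the extra Cartan relations that distinguish $Y_{-\varpi_i^{\vee}}(\Glie)$ as a quotient of $Y_\infty(\Glie)$. Since $\langle -\varpi_i^{\vee},\alpha_j\rangle=-\delta_{ij}$, these amount to $C_{H_i^{(1)}}=\mathrm{Id}$ and $C_{H_i^{(p)}}=0$ for $p\leq 0$, together with $C_{H_j^{(0)}}=\mathrm{Id}$ and $C_{H_j^{(p)}}=0$ for $p<0$ whenever $j\neq i$. All other defining relations of the shifted Yangian are automatically inherited from the $Y_\infty(\Glie)$-module structure of $W_\infty^{(i)}$.

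For $j\neq i$ this is formal: \eqref{rel: compt with xi j} gives $\xi_j(u) F_{k,l}=F_{k,l}\xi_j(u)$, and $\theta_{i,l}(H_j(u))=\xi_j(u)$, so $D_{H_j(u)}^l=0$ and $C_{H_j(u)}^l=F_{l+1,l}\,\xi_j(u)$; since $\xi_j(u)\in 1+u^{-1}\Yang[[u^{-1}]]$ the relations follow. For $j=i$, combining \eqref{rel: compt with xi j} with $\theta_{i,l}(H_i(u))=(ld_i)^{-1}\xi_i(u)$ and $\theta_{i,k}(H_i(u))=(kd_i)^{-1}\xi_i(u)$ gives
\[ H_i(u) F_{k,l}= \frac{ld_i(u+kd_i)}{kd_i(u+ld_i)}\, F_{k,l} H_i(u) = F_{k,l+1}\left(\frac{ld_i}{u+ld_i}+\frac{u\,ld_i}{u+ld_i}(kd_i)^{-1}\right)F_{l+1,l}\,H_i(u), \]
hence $C_{H_i(u)}^l=\frac{1}{u+ld_i}F_{l+1,l}\,\xi_i(u)$. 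To extract the $u^{-p}$ coefficients I decompose $v\in W_{l,0}^{(i)}$ into $\ell$-weight components; on a vector of $\ell$-weight $\Bw_{l,0}^{(i)}\Bf$ with $\Bf\in\lCQ_0^-$, write $\xi_i(u)v=\Be_i(u)v+N(u)v$, where $\Be_i(u)=\frac{u+ld_i}{u}\Bf_i(u)$ and $N(u)\in u^{-1}\End(V_{\Bw_{l,0}^{(i)}\Bf})[[u^{-1}]]$ has nilpotent coefficients. The diagonal part contributes $\frac{\Bf_i(u)}{u}F_{l+1,l}(v)\in u^{-1}F_{l+1,l}(v)+u^{-2}W_{l+1,0}^{(i)}[[u^{-1}]]$, using $\Bf_i(u)=1+O(u^{-1})$, while $\frac{N(u)}{u+ld_i}F_{l+1,l}(v)$ sits in $u^{-2}W_{l+1,0}^{(i)}[[u^{-1}]]$. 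Thus $C_{H_i^{(1)}}^l=F_{l+1,l}$ and $C_{H_i^{(p)}}^l=0$ for $p\leq 0$, which pass to the target relations on the inductive limit.

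The final assertion about $L_{i,x}^-$ is then immediate, as $\tau_x$ is an algebra automorphism of $Y_{-\varpi_i^{\vee}}(\Glie)$. The delicate point of the argument is the exact cancellation between the $(ld_i)^{-1}$ rescaling built into $\theta_{i,l}$ and the shift $\frac{u+kd_i}{u+ld_i}$ in \eqref{rel: compt with xi j}: it produces the factor $\frac{1}{u+ld_i}$ in $C_{H_i(u)}^l$ and, after the inductive limit, the vanishing of $H_i^{(p)}$ for $p\leq 0$, which is the concrete manifestation of the shift by $-\varpi_i^{\vee}$.
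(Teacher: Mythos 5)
Your proof is correct and takes essentially the same approach as the paper: reduce to showing that $\rho_i(H_j(u))$ is a power series in $u^{-1}$ with leading term $u^{-\delta_{ij}}\cdot\mathrm{Id}$, then compute $C_{H_j(u)}^l$, obtaining $C_{H_i(u)}^l=\frac{1}{u+ld_i}F_{l+1,l}\,\xi_i(u)$ for $j=i$. The only difference is that your $\ell$-weight decomposition in the case $j=i$ is superfluous: since $\xi_i(u)=1+O(u^{-1})$ as an operator-valued series, expanding $\frac{1}{u+ld_i}$ as a geometric series in $u^{-1}$ already places $C_{H_i(u)}^l$ in $u^{-1}F_{l+1,l}+u^{-2}\mathrm{Hom}(W_{l,0}^{(i)},W_{l+1,0}^{(i)})[[u^{-1}]]$, which is exactly what the paper does.
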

\begin{proof}
It suffices to show that $\rho_i(H_j(u))$ for $j \in I$ is a power series of $u^{-1}$ with leading term $u^{-\delta_{ij}}$.  Set $C_{H_j(u)}^l:=\sum_p C_{H_j^{(p)}}^l u^{-p}$. If $j \neq i$, then:
\begin{align*}
C_{H_j(u)}^l &= F_{l+1,l} H_j(u) = F_{l+1,l} \xi_j(u) \in F_{l+1,l} + u^{-1} \mathrm{Hom}(W_{l,0}^{(i)}, W_{l+1,0}^{(i)}) [[u^{-1}]], \\
C_{H_i(u)}^l &= F_{l+1,l} \frac{ld_i}{u+ld_i} H_i(u) = F_{l+1,l} \frac{1}{u+ld_i} \xi_i(u) \\
&= F_{l+1,l} (\sum_{n=0}^{+\infty} u^{-n-1} (-ld_i)^{n}) (1 + \sum_{n=0}^{+\infty} \xi_{i,n} u^{-n-1})  \\
& \in u^{-1} F_{l+1,l} + u^{-2}\mathrm{Hom}(W_{l,0}^{(i)}, W_{l+1,0}^{(i)}) [[u^{-1}]].
\end{align*}
The inductive limit of the $F_{l+1,l}$ is the identity map on $W_{\infty}^{(i)}$. 
\end{proof}

The construction of $L_{i,x}^-$ is similar to that in \cite[\S 4.2]{HJ}: ordinary Yangian, Cartan doubled Yangian and shifted Yangian correspond to quantum affine algebra, asymptotic algebra and Borel algebra in \cite[\S 2]{HJ}. 

The shifted Yangian $Y_{\mu}(\Glie)$ is weight graded based on the adjoint action of the $H_i^{(-\langle \mu, \alpha_i\rangle + 1)}$ so that $E_i^{(m)}, F_i^{(m)}$ and $H_i^{(p)}$ are of weight $\alpha_i, -\alpha_i$ and $0$: 
$$ [H_i^{(-\langle \mu, \alpha_i\rangle + 1)}, E_j^{(m)}] = (\alpha_i, \alpha_j) E_j^{(m)},  \quad [H_i^{(-\langle \mu, \alpha_i\rangle + 1)}, F_j^{(m)}] = -(\alpha_i, \alpha_j) F_j^{(m)}. $$
One defines weight, (highest) $\ell$-weight and q-character for an $Y_{\mu}(\Glie)$-module and category $\BGG_{\mu}$ of $Y_{\mu}(\Glie)$-modules as in Section \ref{sec: pre} by replacing $\xi_{i,0}, x_i^+(u)$ and $\xi_i(u)$ with $H_i^{(-\langle \mu, \alpha_i\rangle + 1)}, E_i^+(u)$ and $H_i(u)$. 

As an example, $L_{i,x}^-$ is in category $\BGG_{-\varpi_i^{\vee}}$, it contains a highest $\ell$-weight vector of $\ell$-weight $\Psi_{i,x}^{-1}$, and its q-character is 
$$ \qc(L_{i,x}^-) = \frac{1}{\Psi_{i,x}} \times \lim_{l \rightarrow \infty} \nqc(W_{l,x}^{(i)}). $$

\begin{rem}
The shifted Yangian $Y_{\varpi_i^{\vee}}(\Glie)$ has a one-dimensional representation:
$$ E_j(u) = 0 = F_j(u), \quad H_j(u) = (u+x)^{\delta_{ij}}. $$
This module is of highest $\ell$-weight $\Psi_{i,x}$, and so is the positive prefundamental module $L_{i,x}^+$. Note however that in the case of quantum affine algebras positive and negative prefundamental modules have the same character \cite[Theorem 6.3]{HJ}.
\end{rem}

\begin{rem}
In \cite[\S 4]{Fin}, there is a remarkable family of algebra morphisms (for $\Glie$ simply-laced, but this assumption may be removed in view of \cite[Remark 3.2]{Fin}) 
$$\Delta_{\mu_1,\mu_2}: Y_{\mu_1+\mu_2}(\Glie) \longrightarrow Y_{\mu_1}(\Glie) \otimes Y_{\mu_2}(\Glie)\quad \mathrm{for}\ \mu_1, \mu_2 \in \BQ^{\vee} $$
which satisfies co-associativity for anti-dominant coweights. This implies that the direct sum of the categories $\BGG_{\mu}$ for $\mu$ anti-dominant forms a monoidal category. Such a category contains all the $Y_{\hbar=1}(\Glie)$-modules in category $\BGG$ and the negative prefundamental modules, so we expect it to have similar properties as the category $\BGG^-$ in \cite[Definition 3.9]{HL}. Also irreducibility of $L_{i,x}^-$ can be proved along the line of \cite[Theorem 6.1]{HJ} using the monoidal structure. 
\end{rem}
Note that a similar monoidal category for the degenerate Yangian of $\mathfrak{gl}_2$ appeared in \cite[Appendix A]{FZ}.
As indicated in the introduction,  there are  prefundamental modules over degenerate Yangians for arbitrary $\Glie$ based on the R-matrix realization \cite{CWY, Guay, JLM, W}. Their relationship with our modules is to be clarified.

\appendix
\section{Three-term relations for quantum affine algebras} 
Set $\iota = \sqrt{-1} \in \BC$. Assume $\hbar \notin \mathbb{Q}$, so that $q := e^{\pi \iota \hbar} \in \BC^{\times}$ is not a root of unity. For $x \in \BC$ and $i,j \in I$, set $q^x := e^{\pi \iota \hbar x},\ q_{ij} := q^{d_{ij}}$ and $q_i := q^{d_i}$. We produce three-term relations in category $\hat{\BGG}$ of representations of the quantum affine algebra $U_q(\widehat{\Glie})$ in \cite[Definition 3.3]{MY}. Such a category appeared first in \cite{He}.

Let $x_{i,r}^{\pm}, \phi_{i,\pm n}^{\pm}$, for $(i, r, n) \in I \times \BZ \times \BZ_{\geq 0}$, be the Drinfeld generators of $U_q(\widehat{\Glie})$.  The $\phi_{i,\pm n}^{\pm}$ mutually commute and $\phi_{i,0}^+ \phi_{i,0}^- = 1$, whose spectral decomposition on a module defines the notion of $\ell$-weight.

As in \cite[Definition 3.5]{MY}, let $\mathcal{R}$ be the set of $I$-tuples $(\Be_i(z))_{i \in I}$ of rational functions of $z$ such that each $\Be_i(z)$ is regular at $0, \infty$ and $\Be_i(0) \Be_i(\infty) = 1$. Equivalently, each $\Be_i(z)$ is a finite product of rational functions of the form $\frac{c-zc^{-1}}{a-za^{-1}}$ for $c, a \in \BC^{\times}$. We take Taylor expansions around $0, \infty$ to obtain formal power series in $z^{\pm 1}$:
$$ \sum_{n\geq 0} \Be_{i,n}^+ z^n = \Be_i(u) = \sum_{n \geq 0} \Be_{i,-n}^- z^{-n}.  $$
View $\BC[[z]]^I$ as a monoid by component-wise multiplication. For $(i, a) \in I \times \BC^{\times}$ we define an invertible element $\Phi_{i,a}$ of this monoid by
$$ (\Phi_{i,a})_j(z) = 1 \quad \mathrm{if}\ j \neq i,\quad (\Phi_{i,a})_i(z) = a - z a^{-1}.   $$
This is a scalar multiple of $\Psi_{i,a^{-2}}$ in \cite[Definition 3.7]{HJ}, and equals $X_{i,a^{-1}}$ in \cite[\S 4.1]{Jimbo}. Now $\mathcal{R}$ is characterized as the subgroup of $\BC[[u]]^I$ generated by the ratios $\frac{\Phi_{i,c}}{\Phi_{i,a}}$ for $(i, a, c) \in I \times \BC^{\times} \times \BC^{\times}$. 

By \cite[Theorem 3.6]{MY}, $\mathcal{R}$ is in bijection with the isomorphism classes of irreducible modules in category $\widehat{\BGG}$. To $\Bd \in \mathcal{R}$ is attached an irreducible $U_q(\widehat{\Glie})$-module, denoted by $L(\Bd)$, which is generated by a vector $\omega$ subject to relations:
$$ x_{i,r}^+ \omega = 0,\quad \phi_{i,\pm n}^{\pm} \omega = \Bd_{i,\pm n}^{\pm} \omega \quad \mathrm{for}\ (i, r, n) \in I \times \BZ \times \BZ_{\geq 0}. $$
Fix $(i, a, c) \in I \times \BC^{\times} \times \BC^{\times}$. There is a series of finite-dimensional irreducible modules $L(\frac{\Phi_{i,aq_i^k}}{\Phi_{i,a}})$ for $k \in \BZ_{\geq 0}$, called Kirillov--Reshetikhin modules. It is explained in \cite[Appendix A]{Z2} that this series admits analytic continuation: as $k$ tends to infinity by replacing $q_i^k$ with  $c$ one obtains an asymptotic module denoted by $\SL(\frac{\Phi_{i,ac}}{\Phi_{i,a}})$.

\begin{theorem} \label{them: three-term quantum}
Let $(i, a, c, k) \in I \times \BC^{\times} \times \BC^{\times} \times \BC$  such that $q^k \notin q^{\frac{1}{2} \BZ}$. In the Grothendieck ring of category $\hat{\BGG}$ we have 
\begin{multline} \label{equ: TQ quantum}
\quad \quad \quad [L(\frac{\Phi_{i,aq_i}}{\Phi_{i,a}} \prod_{j: c_{ij} < 0} \frac{\Phi_{j,aq_{ij}}}{\Phi_{j,aq_{ij} q_i^{-k}}})] [\CL(\frac{\Phi_{i,a}}{\Phi_{i,c}})] =  \\ 
[\SL(\frac{\Phi_{i,aq_i}}{\Phi_{i,c}})] \prod_{j: c_{ij} < 0} [\SL(\frac{\Phi_{j,aq_{ij}}}{\Phi_{j,aq_{ij}q_i^{-k}}})]  
 +  [\SL(\frac{\Phi_{i,aq_i^{-1}}}{\Phi_{i,c}})] \prod_{j: c_{ij} < 0} [\SL(\frac{\Phi_{j,aq_{ij}^{-1}}}{\Phi_{j,aq_{ij}q_i^{-k}}})].
\end{multline}
\end{theorem}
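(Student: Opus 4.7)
The plan is to translate the proof of Theorem \ref{thm: main} into the quantum affine setting, using the fact that all the technical ingredients (T-system exact sequences, $\ell$-weight classification of Kirillov--Reshetikhin modules, asymptotic construction) are already available in category $\hat{\BGG}$. The correspondence is governed by the ring morphism $e_{\Pi}$ of Equation \eqref{equ: GTL}, under which the substitution $\Psi_{i,x} \leftrightsquigarrow \Phi_{i,q^{-2x}}$ matches Yangian q-characters with quantum affine q-characters; in particular our genericity hypothesis $q^k \notin q^{\frac{1}{2}\BZ}$ corresponds precisely to the Yangian hypothesis $k \notin \frac{1}{2}\BZ$.

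First I would rewrite \eqref{equ: TQ quantum} as the equivalent normalized q-character identity
\begin{equation*}
\nqc(L(\tfrac{\Phi_{i,aq_i}}{\Phi_{i,a}} \prod_{j:c_{ij}<0} \tfrac{\Phi_{j,aq_{ij}}}{\Phi_{j,aq_{ij}q_i^{-k}}})) = (1 + \mathcal{A}_{i,aq_i}^{-1}) \prod_{j:c_{ij}<0} \nqc(\SL(\tfrac{\Phi_{j,aq_{ij}}}{\Phi_{j,aq_{ij}q_i^{-k}}})),
\end{equation*}
which eliminates $c$. Denote this left-hand module by $\mathcal{M}_{k,a}^{(i)}$. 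I would then establish the quantum affine analogs of Corollaries \ref{cor: l-weight Demazure} and \ref{cor: l-weight prime Demazure}: the $\ell$-weights of the Demazure-like module $L(\Bd_{k,a}^{(i,1)})$ (coming from the quantum T-system of Hernandez--Nakajima) and of $\mathcal{M}_{k,a}^{(i)}$ have exactly the same shape, obtained by applying $e_{\Pi}$ to the Yangian classification.

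Second, I would prove the two bounds exactly as in the Yangian case. For the lower bound, set
\begin{equation*}
S^{\pm} := \SL(\tfrac{\Phi_{i,aq_i^{\pm 1}}}{\Phi_{i,c}}) \otimes \bigotimes_{j:c_{ij}<0} \SL(\tfrac{\Phi_{j,aq_{ij}^{\pm 1}}}{\Phi_{j,aq_{ij}q_i^{-k}}}), \qquad S_n := L(\tfrac{\Phi_{i,cq_i^n}}{\Phi_{i,c}}) \otimes \bigotimes_{j:c_{ij}<0} L(\tfrac{\Phi_{j,aq_{ij}q_i^{-k}q_j^n}}{\Phi_{j,aq_{ij}q_i^{-k}}}).
\end{equation*}
The right-negativity analysis of Step 1 in the proof of Theorem \ref{thm: main} transfers verbatim because Example \ref{example: simple roots right-negative} and the $G_2$ exclusion depend only on the combinatorics of the Cartan data, not on the underlying algebra. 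This gives irreducibility of $S_n$ and $S^{\pm}$ by taking the limit $n \to \infty$, and then realizing $S^{\pm}$ as irreducible sub-quotients of $\mathcal{M}_{k,a}^{(i)} \otimes \SL(\frac{\Phi_{i,a}}{\Phi_{i,c}})$ using the highest $\ell$-weight argument of Claim 2 in \cite[Theorem 5.1]{Z4}. For the upper bound, I would view $\mathcal{M}_{k,a}^{(i)}$ as an irreducible sub-quotient of $\mathcal{M}_{n,a}^{(i)} \otimes B_n$ for $n \in 6\BZ_{>0}$ sufficiently large, with $B_n$ the obvious tensor product of asymptotic modules, and extract the factorization of each $\ell$-weight using the quantum analog of Corollary \ref{cor: l-weight prime Demazure}.

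The main obstacle is verifying that the uniqueness of monomial factorizations holds under the hypothesis $q^k \notin q^{\frac{1}{2}\BZ}$: one needs the submonoids generated by the $\mathcal{A}_{j,b}^{-1}$ with $b \in a q^{\BZ}$ and with $b \in a q^{-2k+\BZ}$ to intersect trivially, which is precisely ensured by the assumption since $q$ is not a root of unity. Once this combinatorial separation is in place, the matching of the two bounds proves the character identity \eqref{equ: character formula}'s quantum analog and hence \eqref{equ: TQ quantum}. A minor additional check is that the asymptotic modules $\SL(\frac{\Phi_{i,c}}{\Phi_{i,a}})$ with $c/a \notin q^{\frac{1}{2}\BZ}$ are irreducible, which follows from the irreducibility of $S^+$ as in the corollary following Theorem \ref{thm: main}.
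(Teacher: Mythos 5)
Your proposal is correct and follows exactly the route the paper intends: the paper's own proof of Theorem \ref{them: three-term quantum} consists of the single remark that it is ``almost identical to that of Theorem \ref{thm: main}'', and your write-up is precisely that translation (normalized q-character reformulation, right-negativity and irreducibility of the auxiliary tensor products for the lower bound, the sub-quotient embedding into $\mathcal{M}_{n,a}^{(i)} \otimes B_n$ for the upper bound, and the monomial-separation check under $q^k \notin q^{\frac{1}{2}\BZ}$). The only caveat is a harmless normalization ambiguity in the dictionary ($\Phi_{i,q^{-2x}}$ via \eqref{equ: GTL} versus $\Phi_{i,q^x}$ as stated at the end of the appendix), which does not affect the argument.
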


The proof of the theorem is almost identical to that of Theorem \ref{thm: main}.

Theorems \ref{thm: main} and \ref{them: three-term quantum} should be included in the framework of \cite{GTL} relating Yangians to quantum affine algebras. Note that \cite{GTL} only involved integrable modules, for which Serre relations hold for free, while our asymptotic modules are non integrable. We expect that the functor $\Gamma_{\Pi}$ of \cite{GTL} can be extended to a functor $\Gamma: \BGG \longrightarrow \widehat{\BGG}$ from $Y_{\hbar}(\Glie)$-modules to $U_q(\widehat{\Glie})$-modules so that 
$$ \Gamma(L(\prod_{(i,a,b) \in X} \frac{\Psi_{i,a}}{\Psi_{i,b}} )) \cong  L(\prod_{(i,a,b) \in X} \frac{\Phi_{i,q^a}}{\Phi_{i,q^b}} ) $$
for any finite subset $X$ of $I \times \BC^2$ and the q-characters are preserved after the replacement $\Psi_{i,a} \longleftrightarrow \Phi_{i,q^a}$. This agrees with the finite-dimensional case in view of Equations \eqref{def: fund l-weights}, \eqref{equ: GTL} and the identification
$\mathcal{Y}_{i,q^{-2a}} = \Phi_{i,q^{a+\frac{1}{2}d_i}} \Phi_{i,q^{a-\frac{1}{2}d_i}}^{-1}$.

One may view $\hat{\BGG}$ as a subcategory of the category of modules over the upper Borel subalgebra introduced by Hernandez--Jimbo \cite[Definition 3.8]{HJ}. This larger category admits irreducible modules of highest $\ell$-weights $\Phi_{i,a}$ for all $(i,a) \in I \times \BC^{\times}$. Equation \eqref{equ: TQ quantum} becomes \cite[Eq.(6.13)]{HL} and \cite[Eq.(1.3)]{Jimbo} when one removes all the $\Phi_{i,c}$ and $\Phi_{j,aq_{ij}q_i^{-k}}$ in the denominators and replaces $\SL$ by $L$.

Similar three-term relations have been established for prefundamental modules over quantum toroidal $\mathfrak{gl}_1$ \cite[Eq.(4.24)]{Jimbo2} and for asymptotic modules over quantum affine superalgebras of type A \cite[Eq.(5.30)]{Z3}.


\begin{thebibliography}{XX}
\bibitem{Baxter72} R. J. Baxter,
\newblock{\it Partition function of the eight-vertex lattice model},
\newblock Ann. Phys. {\bf 70} (1972): 193--228.

\bibitem{B}{V. Bazhanov, T. Lukowski, C. Meneghelli, and M. Staudacher}, 
\newblock {\it A shortcut to the Q-operator},
\newblock J. Stat. Mech. {\bf 1011} (2010): P11002.

\bibitem{B2}{V. Bazhanov, R. Frassek, T. Lukowski, C. Meneghelli, and M. Staudacher},
\newblock {\it Baxter Q-operators and representations of Yangians},
\newblock Nuclear Phys. B {\bf 850}, no. 1 (2011): 148--174.

\bibitem{BazhanovLukyanovZamolodchikov1997}
V. V. Bazhanov, S. L. Lukyanov and A. B. Zamolodchikov,
\newblock {\it Integrable structure of conformal field theory.
II. Q-operator and DDV equation},
\newblock Comm. Math. Phys. {\bf 190} (1997): 247--278.

\bibitem{BazhanovLukyanovZamolodchikov1999}
{------},
\newblock {\it Integrable structure of conformal field theory.
III. The Yang-Baxter Relation},
\newblock Comm. Math. Phys. {\bf 200} (1999): 297--324.

\bibitem{BFN}
A. Braverman, M. Finkelberg, and H. Nakajima, 
\newblock {\it Coulomb branches of $3d$ $\mathcal{N} = 4$ quiver gauge theory and slices in the affine Grassmannian},
\newblock Adv. Theor. Math. Phys. {\bf 23} (2019): 75--166, with appendices by A. Braverman, M. Finkelberg, J. Kramnitzer, R. Kodera, H. Nakajima, B. Webster, and A. Weekes. (arXiv:1604.03625)

\bibitem{CP}{V. Chari and A. Pressley},
\newblock {\it Yangians and R-matrices},
\newblock l'Enseign. Math. {\bf 36} (1990): 267--302.

\bibitem{Chari}{V. Chari},
\newblock {\it Braid group actions and tensor products},
\newblock Int. Math. Res. Not. {\bf 2002}, no. 7 (2002): 357--382.

\bibitem{CWY}{K. Costello, E. Witten, and M. Yamazaki},
\newblock {\it Gauge theory and integrability, II},
\newblock ICCM Not. 6 (2018): 120--146. (arXiv:1802.01579)

\bibitem{Dr}{V. Drinfeld},
\newblock {\it A new realization of Yangians and quantum affine algebras},
\newblock Soviet Math. Dokl. {\bf 36}, no. 2 (1988): 212--216.

\bibitem{Jimbo}{B. Feigin, M. Jimbo, T. Miwa, and E. Mukhin}
\newblock {\it Finite type modules and Bethe ansatz equations},
\newblock Ann. Henri Poincar\'e {\bf 18} (2017): 2543--2579.

\bibitem{Jimbo2}{------},
\newblock {\it Finite type modules and Bethe ansatz for quantum toroidal $\mathfrak{gl}_1$}, 
\newblock Commun. Math. Phys. {\bf 356}, no. 1 (2017): 285--327. 

\bibitem{F}{G. Felder},
\newblock {\it Elliptic quantum groups},
\newblock XIth International Congress of Mathematical Physics, Paris 1994, Int. Press, Cambridge, MA, 1995, 211--218. (arXiv:hep-th/9412207)

\bibitem{FZ}{G. Felder and H. Zhang}, 
\newblock {\it Baxter operators and asymptotic representations},
\newblock Selecta Math. (N.S.) {\bf 23}, no. 4 (2017): 2947--2975.

\bibitem{Fin}{M. Finkelberg, J. Kamnitzer, K. Pham, L. Rybnikov, and A. Weekes},
\newblock {\it Comultiplication for shifted Yangians and quantum open Toda lattice}, 
\newblock Adv. Math. {\bf 327} (2018): 349--389.

\bibitem{FoH}{G. Fourier and D. Hernandez}, 
\newblock {\it Schur positivity and Kirillov--Reshetikhin modules},
\newblock SIGMA {\bf 10} (2014), 058.

\bibitem{FH}{E. Frenkel and D. Hernandez},
\newblock {\it Baxter's relations and spectra of quantum integrable models},
\newblock Duke Math. J. {\bf 164}, no. 12 (2015): 2407--2460.

\bibitem{FH2}{------},
\newblock {\it Spectra of quantum KdV Hamiltonians, Langlands duality, and affine opers},
\newblock Commun. Math. Phys. {\bf 362}, no. 2 (2018): 361--414.

\bibitem{FM}{E. Frenkel and E. Mukhin},
\newblock {\it Combinatorics of $q$-characters of finite-dimensional representations of quantum affine algebras},
\newblock Commun. Math. Phys. {\bf 216} (2001): 23--57.

\bibitem{FR}{E. Frenkel and N. Reshetikhin},
\newblock {\it The $q$-character of representations of quantum affine algebras and deformations of $\mathcal{W}$-algebras},
\newblock Recent Developments in Quantum Affine Algebras and related topics, Contemp. Math. {\bf 248} (1999): 163--205. 

\bibitem{GTL}{S. Gautam and V. Toledano Laredo},
\newblock {\it Yangians, quantum loop algebras and abelian difference equations},
\newblock J. Amer. Math. Soc. {\bf 29} (2016): 775--824.

\bibitem{GTL2}{------},
\newblock {\it Elliptic quantum groups and their finite-dimensional representations},
\newblock Preprint arXiv:1707.06469 (2017).

\bibitem{GNW}{N. Guay, H. Nakajima, and C. Wendlandt},
\newblock {\it Coproduct for Yangians of affine Kac--Moody algebras},
\newblock Adv. Math. {\bf 338} (2018): 865--911.

\bibitem{Guay}{N. Guay, V. Regelskis, and C. Wendlandt},
\newblock {\it Equivalences between three presentations of orthogonal and symplectic Yangians},
\newblock Lett. Math. Phys. {\bf 109} (2019): 327--379.

\bibitem{H1}{D. Hernandez}, 
\newblock {\it The Kirillov-Reshetikhin conjecture and solutions of T-systems},
\newblock J. Reine Angrew. Math. {\bf 596} (2006): 63--87.

\bibitem{He}{------},
\newblock {\it Representations of quantum affinizations and fusion product},
\newblock Transform. Groups \textbf{10}, no. 2 (2005): 163--200.

\bibitem{HJ}{D. Hernandez and M. Jimbo},
\newblock {\it Asymptotic representations and Drinfeld rational fractions}, 
\newblock Compos. Math. {\bf 148}, no. 5 (2012): 1593--1623.

\bibitem{HL}{D. Hernandez and B. Leclerc},
\newblock {\it Cluster algebras and category O for representations of Borel subalgebras of quantum affine algebras},
\newblock Alg. Number Th. {\bf 10}, no. 9 (2016): 2015--2052.

\bibitem{JLM}{N. Jing, M. Liu, and A. Molev},
\newblock {\it Isomorphism between the $R$-matrix and Drinfeld presentations of Yangians in types $B, C$ and $D$},
\newblock Commun. Math. Phys. {\bf 361} (2018): 827--872.

\bibitem{JZ}{N. Jing and H-L. Zhang},
\newblock {\it Hopf algebraic structures of quantum toroidal algebras},
\newblock Preprint arXiv:1604.05416 (2016).

\bibitem{Kac}{V. Kac},
\newblock {\it Infinite dimensional Lie algebras},
\newblock 3rd ed., Cambridge Univ. Press, 1990.

\bibitem{KR}{A. Kirillov and N. Reshetikhin},
\newblock {\it Representation of Yangians and multiplicities of the inclusion of the irreducible components of the tensor product of representations of simple Lie algebras},
\newblock Zap. Nauch. Sem. LOMI {\bf 160} (1987), 211--221, translation in Journal of Soviet Math. {\bf 52} (1990), 3156--3164.

\bibitem{KT}{S. Khoroshkin and V. Tolstoy},
\newblock {\it Yangian double},
\newblock Lett. Math. Phys. {\bf 36} (1996): 373--402.

\bibitem{Kn}{H. Knight},
\newblock {\it Spectra of tensor products of finite dimensional representations of Yangians},
\newblock J. Algebra {\bf 174} (1995): 187--196.


\bibitem{MY}{E. Mukhin and C. Young}, 
\newblock {\it Affinization of category $\mathcal{O}$ for quantum groups},
\newblock Trans. Amer. Math. Soc. {\bf 366}, no. 9 (2014): 4815--4847.

\bibitem{MS}{M. M\"uller-Schrader},
\newblock {\it Asymptotic representations of Yangians},
\newblock Master thesis, Swiss Federal Institute of Technology Z\"urich, 2018.

\bibitem{Nakajima}{H. Nakajima},
\newblock {\it $t$-analog of $q$-characters of Kirillov--Reshetikhin modules of quantum affine algebras}, 
\newblock Represent. Theory {\bf 7} (2003): 259--274.

\bibitem{TG}{Y. Tan and N. Guay},
\newblock {\it Local Weyl modules and cyclicity of tensor products for Yangians},
\newblock J. Algebra {\bf 432} (2015): 228--251.

\bibitem{T}{Y. Tan},
\newblock {\it Braid group actions and tensor products for Yangians},
\newblock Preprint arXiv:1510.01533 (2015).

\bibitem{W}{C. Wendlandt},
\newblock {\it The $R$-matrix presentation for Yangian of a simple Lie algebra},
\newblock Commun. Math. Phys. {\bf 363} (2018): 289--332.

\bibitem{YZ}{Y. Yang and G. Zhao},
\newblock {\it Quiver varieties and elliptic quantum groups},
\newblock Preprint (2017) arXiv:1708.01418.

\bibitem{Z2}{H. Zhang},
\newblock {\it Asymptotic representations of quantum affine superalgebras},
\newblock SIGMA {\bf 13} (2017), 066. 

\bibitem{Z3}{------},
\newblock {\it Length-two representations of quantum affine superalgebras and Baxter operators}, 
\newblock Commun. Math. Phys. {\bf 358} (2018): 815--862.

\bibitem{Z4}{------},
\newblock {\it Elliptic quantum groups and Baxter relations},
\newblock Alg. Number Th. {\bf 12} (2018): 599--647.
\end{thebibliography}
\end{document}